\title{Lipschitz regularity of almost minimizers in a Bernoulli problem with non-standard growth}
\author{\it by \smallskip \\
 Jo\~{a}o Vitor da Silva,\footnote{\noindent \textsc{Jo\~ao Vitor da Silva}.
Universidade Estadual de Campinas - UNICAMP. Department of Mathematics. Campinas - SP, Brazil.
\texttt{E-mail address: jdasilva@unicamp.br}
}
\smallskip \qquad Anal\'ia Silva\footnote{\noindent \textsc{Anal\'ia Silva}.
 Departamento de Matem\'atica, FCFMyN, Universidad Nacional de San Luis and Instituto de Matem\'atica Aplicada San Luis (IMASL), UNSL - CONICET. Ejercito de los Andes 950, D5700HHW, San Luis, Argentina.
\texttt{E-mail address: acsilva@unsl.edu.ar}
}
\smallskip \\
\quad $\&$ \quad
\smallskip \\
Hern\'an Vivas\footnote{\noindent \textsc{Hern\'an Vivas}.
 Centro Marplatense de Investigaciones Matem\'aticas (CEMIM), Universidad Nacional de Mar del Plata, CIC, CONICET. De\'an Funes 3350, 7600, Mar del Plata, Argentina.
\texttt{E-mail address: havivas@unmdp.edu.ar}
}}
\newlength{\hchng}
\newlength{\vchng}
\def \N {\mathbb{N}}
\def \R {\mathbb{R}}
\newcommand{\defeq}{\mathrel{\mathop:}=}
\def\osc{\mathop{\text{\normalfont osc}}}
\def\diver{\mathop{\text{\normalfont div}}}
\newcommand{\ssubset}{\subset\joinrel\subset}
\newtheorem{theorem}{Theorem}[section]
\newtheorem{lemma}[theorem]{Lemma}
\newtheorem{proposition}[theorem]{Proposition}
\newtheorem{corollary}[theorem]{Corollary}
\theoremstyle{definition}
\newtheorem{definition}[theorem]{Definition}
\theoremstyle{remark}
\newtheorem{remark}[theorem]{Remark}
\numberwithin{equation}{section}
\newcommand{\intav}[1]{\mathchoice {\mathop{\vrule width 6pt height 3 pt depth  -2.5pt
\kern -8pt \intop}\nolimits_{\kern -6pt#1}} {\mathop{\vrule width
5pt height 3  pt depth -2.6pt \kern -6pt \intop}\nolimits_{#1}}
{\mathop{\vrule width 5pt height 3 pt depth -2.6pt \kern -6pt
\intop}\nolimits_{#1}} {\mathop{\vrule width 5pt height 3 pt depth
-2.6pt \kern -6pt \intop}\nolimits_{#1}}}
\begin{document}
\maketitle

\begin{abstract}

In this work we  establish the optimal Lipschitz regularity for non-negative almost minimizers of the one-phase Bernoulli-type  functional
$$
\mathcal{J}_{\mathrm{G}}(u,\Omega) \defeq \int_\Omega \left(\mathrm{G}(|\nabla u|)+\chi_{\{u>0\}}\right)\,dx
$$
where $\Omega \subset \mathbb{R}^n$ is a bounded domain and $\mathrm{G}: [0, \infty) \to [0, \infty) $ is a Young function with $\mathrm{G}^{\prime}=g$ satisfying the Lieberman's classical conditions. Moreover, of independent mathematical interest, we also address a H\"{o}der regularity characterization via Campanato-type estimates in the context of Orlicz modulars, which is new for such a class of non-standard growth functionals.

\bigskip

\textbf{Keywords:} Almost minimizers, Lipschitz regularity, Orlicz spaces

\bigskip

\textbf{MSC 2020:} 35B65, 35J70, 35J87, 46E30.

\end{abstract}



\section{Introduction and main result}

In this manuscript we deal with one-phase problems driven by the $\mathrm{G}-$energy functionals, with $\mathrm{G}$ a suitable function with non-polynomial growth. Specifically, we study nonnegative almost minimizers for one-phase Bernoulli-type functional as follows
$$
\mathcal{J}_{\mathrm{G}}(u,\Omega)\defeq \int_\Omega \left(\mathrm{G}(|\nabla u|)+\chi_{\{u>0\}}\right)\,dx
$$
where $\Omega \subset \mathbb{R}^n$ is a bounded Lipschitz domain and $\mathrm{G}: [0, \infty) \to [0, \infty) $ is a Young function with $\mathrm{G}^{\prime}=g$ satisfying
\begin{equation}\label{LiebCond}
\delta\leq \frac{tg^{\prime}(t)}{g(t)}\leq g_0 \qquad \text{for some constants} \qquad 1<\delta\leq g_0< \infty \quad (\text{Lieberman's condition}).
\end{equation}
In our research, almost minimizers mean that we will consider functions $0 \le u\in W^{1,\mathrm{G}}(\Omega)$ such that for some $\beta>0$ and constant $\kappa\geq0$ it holds that
\begin{equation}\label{VarIneqAlmMin}
  \mathcal{J}_{\mathrm{G}}(u,B_r(x))\leq (1+\kappa r^\beta)\mathcal{J}_{\mathrm{G}}(v,B_r(x))
\end{equation}
for any ball $B_r(x)$ such that $\overline{B_r(x)}\subset\Omega$ and any $v\in W^{1,\mathrm{G}}(B_r(x))$ such that $v=u$ on $\partial B_r(x)$ in the sense of traces (see Definition \ref{almostmin} and Subsection \ref{O-S-Subsection} for more details).

Particularly, in this framework, we address optimal local Lipschitz regularity for such class of almost minimizers, where the corresponding  estimate depends only on universal parameters
$$
\|\nabla u\|_{L^\infty(\Omega^{\prime})}\leq \mathrm{C}(\delta, g_0,\Omega^{\prime}, \beta, \kappa) \mathrm{G}^{-1}\left(1+\int_{\Omega}\mathrm{G}\left(|\nabla u|\right)\:dx\right) \quad \text{for any} \quad \Omega^{\prime} \subset \subset \Omega.
$$

We must highlight that the functional $\mathcal{J}_{\mathrm{G}}$ is a natural generalization of the classical one-phase (Bernoulli type) $p-$energy functional, which corresponds to the choice of $\mathrm{G}(t) = \frac{1}{p}t^p$ (for each $p \in \left(1, \infty\right)$ fixed). We will denote by $\mathcal{J}_p$ such functional.

Additionally, in the current literature, see \textit{e.g.} \cite{dipierro2022lipschitz}, almost minimizers have been interpreted as a sort of minimizers of ``perturbed functionals''. For instance, one can consider the functional with regional structure (which is quite similar to certain functionals arising in phase transitions for nonlinear nonlocal models - see \cite{CarGva21})
\begin{equation}\label{Convolut}
  \displaystyle \hat{\mathcal{J}}_{\mathrm{G}}(u,\Omega) = \mathcal{J}_{\mathrm{G}}(u,\Omega) + \frac{1}{2}\iint_{\Omega \times \Omega} \Phi(y, z)\Psi(u(y)-u(z))dydz,
\end{equation}
where $\Phi(y, z) = \rho(y)\rho(z)$ with  $\rho(x) = \Theta(u(x))$, and $\Theta: \mathbb{R} \to [0, 1]$ fulfils $\Theta \equiv 0$ for $x \leq 0$. Moreover, the potential $\Psi: \mathbb{R} \to [0, 1]$ satisfies $\Psi \equiv 0$ for $x \in (-\infty, 0]$

Now, if we define the $\Psi_u-$convolution as
$$
\displaystyle (\Phi \ast \Psi_u)(y, z) \defeq \iint_{\Omega \times \Omega} \Phi(y, z)\Psi(u(y)-u(z))dydz,
$$
then \eqref{Convolut} turns out to be
$$
  \displaystyle \hat{\mathcal{J}}_{\mathrm{G}}(u,\Omega) = \mathcal{J}_{\mathrm{G}}(u,\Omega) + \frac{1}{2}\Phi(y, z) \ast \Psi_u(y, z).
$$
Then, we verify that
$$
\mathcal{J}_{\mathrm{G}}(u,\Omega) \le \hat{\mathcal{J}}_{\mathrm{G}}(u,\Omega).
$$
On the other hand, it is easy to see
$$
\hat{\mathcal{J}}_{\mathrm{G}}(u,\Omega) \le \left(1 + \frac{1}{2}\omega_n r^n\right)\mathcal{J}_{\mathrm{G}}(u,\Omega) \le \left(1 + \frac{1}{2}\omega_n r^n\right)\mathcal{J}_{\mathrm{G}}(v,\Omega),
$$
for any $v\in W^{1,\mathrm{G}}(B_r(x))$ such that $v=u$ on $\partial B_r(x)$. In short, a minimizer for the ``perturbed functional'' $\hat{\mathcal{J}}_{\mathrm{G}}(u,\Omega)$ be revealed to be an almost minimizer for the $\mathrm{G}-$energy functional $\mathcal{J}_{\mathrm{G}}(u,\Omega)$ for appropriated choices of constants $\beta$ and $\kappa$. Therefore, it becomes a necessary task to work with almost minimizers instead of minimizers because of the impossibility of dealing with weak solutions of certain discontinuous functionals in certain general frameworks.

Regarding to minimizers in the linear setting, in Alt and Caffarelli's seminal paper \cite{AltCaf} concerning regularity minimizers to one-phase Bernoulli energy functional given by
$$
\displaystyle \mathcal{J}_2(u, \Omega) \defeq \int_{\Omega} \left(|\nabla u|^2 + \chi_{\{u>0\}}\right)dx
$$
it is established the locally Lipschitz regularity estimates. Moreover, it can be shown that minimizers $\displaystyle \mathcal{J}_2(u_0, \Omega) = \min_{\mathcal{K}} \mathcal{J}_2(u, \Omega)$ are solutions of the following free boundary problem
\begin{equation}\label{AltCaffProb}
  \left\{
\begin{array}{rclcl}
  \Delta u_0(x) & =& 0 & \text{in} & \{u_0>0\}\cap \Omega \\
  |\nabla u_0| & = & \sqrt{2} & \text{on} & \partial \{u_0>0\}\cap\Omega\\
  u(x) & =& g(x) & \text{on} & \partial \Omega.
\end{array}
\right.
\end{equation}
in an appropriate distributional sense.

The natural motivations to investigate such a class of free boundary problems of Bernoulli type  \eqref{AltCaffProb} comes from the analysis of cavities and jets type problems, see \textit{e.g.} \cite[Section 1.1]{CafSalBook}. Another relevant models also arise in combustion theory \cite{BCN90}, optimal design problems \cite{Tex10}, optimization problems with constrained volume \cite{AAC86}, shape optimization problems \cite{BucBut05}, \cite{BucGiac16} and \cite{BucVel15} and phase transitions \cite{PetroVald05}, \cite{PetVal05} and \cite{Vald06} just to mention a few scenarios.

Over the past few decades, there has been extensive literature exploring this research topic, particularly in the context of both single and two-phase problems, most notably the celebrated viscosity approach to the associated free boundary developed by Caffarelli in the series of trailblazing works \cite{Caff87}, \cite{Caff88} and \cite{Caff89}. We must refer the reader to \cite{DePSV21}, \cite{DeSFS19} and \cite{Velichkov23} for comprehensive modern essays on the subject.

Minimizers of the functional $\mathcal{J}_p$ have been also considered by Danielli and Petrosyan in \cite{DP05}, in which the regularity of the free boundary near flat points was addressed. Recently, Lipschitz estimates of the minimizers of the functional $\mathcal{J}_p$ has been addressed by DiPierro and Karakhanyan in \cite{DiPKar18}, where the authors also supplied the proof of the Lipschitz regularity when $p = 2$ without making use of monotonicity formulaes.

We must also recall Martinez-Wolanski's work \cite{MW08}, which consider the optimization problem of minimizing
$$
\mathcal{J}_{\mathrm{G}, \lambda}(u,\Omega) \defeq \int_\Omega \left(\mathrm{G}(|\nabla u|)+\lambda\chi_{\{u>0\}}\right)\,dx
$$
in the class $W^{1,\mathrm{G}}(\Omega)$ with $u-\phi_0\in W_0^{1,\mathrm{G}}(\Omega)$, for a bounded function $\phi_0\ge 0$ and $\lambda>0$. The authors prove that solutions to the optimization problem are locally Lipschitz continuous. Moreover, such solutions satisfy the corresponding free boundary problem of Bernoulli-type, thereby extending the Alt-Caffarelli's results for the scenario of Orlicz-Sobolev framework. Additionally, they address the Caffarelli’s classification scheme: flat and Lipschitz free boundaries are locally $C^{1,\alpha}$ for some $\alpha(\verb"universal") \in (0,  1)$. In \cite{BLO20} the authors extend these results to quasilinear singular/degenerate operators in the nonhomogeneous setting (i.e. with a nonzero right hand side).

Furthermore, viscosity approaches for one-phase problems driven by the $p-$Laplace and $p(x)-$Laplace operators has been developed in \cite{LeiRic18} and \cite{FerLed23} respectively. Additionally, in \cite{DaSRRV23}, the authors obtain the Lipschitz regularity of viscosity solutions of one-phase problems with non-homogeneous degeneracy (fully nonlinear operators with double phase signature) and some regularity properties of their free boundaries were addressed.

Now coming back to the setting of almost minimizers, we observe that since almost minimizers only fulfill a variational inequality, see \eqref{VarIneqAlmMin}, but not a proper PDE, the main obstacle in facing their regularity properties is the lack of a monotonicity formula as minimizers do (cf. \cite{AltCaf}). In this regard, it seems to be challenging to derive such techniques to the setting almost minimizers (cf. \cite{DET19} and \cite{DEST21} for related topics). In particular, in the quite recent article \cite{DeSS21}, De Silva and Savin developed a non-variational approach, based on Harnack-type inequality, for profiles that do not necessarily satisfy an infinitesimal equation.

We highlight that almost minimizers of $\mathcal{J}_2$ were widely investigated recently by several authors. In effect, we must cite the David  et al' work \cite{DET19}, where by developing an original approach combining techniques from potential theory and geometric measure theory, the authors obtain uniform rectifiability of the free boundary and, in the one-phase scenario, the corresponding $C^{1, \alpha}$ almost everywhere regularity. Therefore, the Alt-Caffarelli's classical results in \cite{AltCaf} were extended to the framework of almost minimizers.  We refer the reader to \cite{DEST21} for generalizations concerning variable coefficients. Moreover, the analysis of the semilinear scenario with variable coefficients
$$
\displaystyle \mathcal{F}_{\gamma}(v; \Omega) = \int_{\Omega} (\langle \mathbb{A}(x)\nabla v, \nabla v\rangle + q_{+}(v^{+})^{\gamma} + q_{-}(v^{-})^{\gamma})dx, \quad \text{for} \quad 0\le \gamma \le 1 \quad \text{and} \quad q_{\pm} \ge 0,
$$
where $\mathbb{A}$ is a matrix with H\"{o}lder continuous coefficients was explored in \cite{QueiTav18}, thereby proving sharp gradient estimates.

In \cite{DeSS20}, using non-variational techniques, De Silva and Savin provided a different approach from that of \cite{DET19} and \cite{DT15} to deal with almost minimizers of $\mathcal{J}_2$ and their free boundaries. Precisely, based on ideas developed by them in \cite{DeSS21}, they showed that almost minimizers of $\mathcal{J}_2$ are viscosity solutions in a more general sense. Once this was confirmed, the regularity of the free boundary for almost minimizers follows by applying the De Silva's techniques first developed in \cite{DeS11}.

Recently,  Dipierro \textit{et al} in \cite{dipierro2022lipschitz} obtained a Lipschitz continuity result to nonnegative almost minimizers for the nonlinear framework, namely for $\mathcal{J}_p$ in $B_1$, in the case $\max\left\{ 1, \frac{2n}{n+2}\right\}< p < \infty$. Precisely, there exists a universal constant $\mathrm{C}=\mathrm{C}(p, n, \beta, \kappa) >0$ such that
$$
\|\nabla u\|_{L^\infty(B_{1/2})}\leq \mathrm{C} \left(1+\|u\|_{W^{1,p}(B_1)}\right)
$$
Additionally, $u$ is uniformly Lipschitz continuous in a neighborhood of contact set $\{u = 0\}$. In effect, the authors' approach  was strongly inspired by the method introduced by De Silva and Savin in \cite{DeSS20}. In their setting, the main obstacles faced in are concerned the lack of linearity and the loss of exact descriptions of $p-$harmonic profiles in terms of mean value properties. To overcome such complication, the authors exploit some regularity estimates available in the classical literature.

For the vectorial scenarios we must quote the following contributions:

\begin{enumerate}
  \item For a singular system with free boundary, De Silva \textit{et al} in \cite{DeSSS22} study the regularity properties of vector-valued almost minimizers of the functional
$$
      \displaystyle \mathcal{J}_2({\textbf{u}};\mathrm{D}) = \int_{\mathrm{D}} \left(|\nabla \textbf{u}(x)|^2 + 2|\textbf{u}|\right)dx, 
$$ 
which is strongly related to a version of the classical obstacle problem (see \cite{Figalli2018}).

  \item For the weakly coupled vectorial $p-$Laplacian, given constant $\lambda > 0$ and a bounded Lipschitz domain $\mathrm{D} \subset  \mathbb{R}^n$ (for $n \ge  2$), Shahgholian \textit{et al} in \cite[Theorem 1.1]{Shahgholian2023} address local optimal Lipschitz estimates for almost minimizers of
$$
\displaystyle \mathcal{J}_p({\textbf{v}};\mathrm{D}) = \int_{\mathrm{D}} \sum_{i=1}^{m} |\nabla v_i(x)|^p + \lambda \chi_{\{|\bf{v}|>0\}}(x)dx, \quad (\text{for}\,\,\,1 < p < \infty),
$$
where ${\textbf{v}} = (v_1, \cdots , v_m)$, and $m \in \mathbb{N}$.

\end{enumerate}

Taking into account the above results, we will derive uniform Lipschitz estimates for a general class of almost minimizers of functionals with non-standard growth. Even if our approach is inspired by the works \cite{DeSS20} and \cite{dipierro2022lipschitz}, we highlight that the non-homogeneous nature of our functional entails several difficulties that add to the nonlinear character of the problem already present in the $p-$Laplace case. The lack of sharp embedding results and the hard task of handling  modulars and norms (for instance when applying H\"older's inequality) are two of these challenges which are successfully tackled in this manuscript, together with the general technical difficulty of dealing with general behavior different from a power and hence not necessarily homogeneous.

Another gap that we needed to fill was the absence of Campanato-type results dealing with modulars instead of norms. We consider this point to be of independent mathematical interest, as the results proved here (see Appendix) may be useful in other contexts that pertain regularity in Orlicz-Sobolev settings.

The main regularity result of this proposal is the following (see Section \ref{sec.prel} for details on the hypothesis):

\begin{theorem}\label{thm.main}
Let $\mathrm{G}: [0, \infty) \to [0, \infty)$ be a Young function such that $g=\mathrm{G}^{\prime}$ is convex and satisfies \eqref{LiebCond} with
\[
\frac{n(\delta+1)}{n-(\delta+1)}>g_0+1.
\]
Let $u\in W^{1,\mathrm{G}}(\Omega)$ be an almost minimizer of $\mathcal{J}_{\mathrm{G}}$ with exponent $\beta>0$ and constant $\kappa\geq0$. Then, $u$ is Lipschitz continuous in the interior of $\Omega$. Moreover, for any $\Omega^{\prime} \ssubset \Omega$ one has
\[
\|\nabla u\|_{L^\infty(\Omega^{\prime})}\leq \mathrm{C}\mathrm{G}^{-1}\left(1+\int_{\Omega}\mathrm{G}\left(|\nabla u|\right)\:dx\right) \quad \text{for any} \quad \Omega^{\prime} \subset \subset \Omega.
\]
where $\mathrm{C}=\mathrm{C}(\delta, g_0,\Omega^{\prime}, \beta, \kappa)>0$.
\end{theorem}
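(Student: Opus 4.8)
The plan is to follow the non-variational strategy of De Silva--Savin and Dipierro \emph{et al}, adapted to the Orlicz setting, and to reduce matters to an iteration on dyadic balls combined with a comparison argument against $\mathrm{G}$-harmonic functions (i.e. solutions of $\operatorname{div}(g(|\nabla v|)\nabla v/|\nabla v|)=0$). After a standard rescaling we may assume $\Omega=B_1$ and it suffices to bound $\|\nabla u\|_{L^\infty(B_{1/2})}$; the quantity $M\defeq\mathrm{G}^{-1}\big(1+\int_{B_1}\mathrm{G}(|\nabla u|)\,dx\big)$ plays the role of the ambient scale. The first step is an energy (Caccioppoli-type) estimate: testing the almost-minimality inequality \eqref{VarIneqAlmMin} with $v=u$ outside a smaller ball and $v$ equal to the $\mathrm{G}$-harmonic replacement of $u$ inside, one controls $\int_{B_r}\mathrm{G}(|\nabla u|)$ in terms of $\int_{B_r}\mathrm{G}(|\nabla u|)$ on a slightly larger ball plus the error $\kappa r^{\beta}\mathcal{J}_{\mathrm{G}}(u,B_r)$ and the measure term $|B_r|$; iterating this yields a density bound $\fint_{B_r}\mathrm{G}(|\nabla u|)\,dx\lesssim \mathrm{G}(M)$ for all small $r$, which is where the structural condition $\tfrac{n(\delta+1)}{n-(\delta+1)}>g_0+1$ enters, guaranteeing the Sobolev-type embedding $W^{1,\mathrm{G}}\hookrightarrow W^{1,g_0+1}$ and the integrability needed to absorb lower-order contributions.

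The core of the argument is a decay/compactness step. Fix a ball $B_r\subset B_{1/2}$ and let $h$ be the $\mathrm{G}$-harmonic function in $B_r$ with $h=u$ on $\partial B_r$. Using almost-minimality and the strict convexity of $\mathrm{G}$ one proves the Orlicz energy comparison
\[
\int_{B_r}\mathrm{G}\!\left(|\nabla u-\nabla h|\right)dx \;\le\; \mathrm{C}\Big(\kappa r^{\beta}\,\mathcal{J}_{\mathrm{G}}(u,B_r)+|B_r|\,\Big),
\]
obtained by expanding $\mathrm{G}(|\nabla u|)-\mathrm{G}(|\nabla h|)$ and using that $h$ is a critical point of the $\mathrm{G}$-Dirichlet energy (so the first-order term vanishes) together with the quantitative convexity $\mathrm{G}(|a|)-\mathrm{G}(|b|)-g(|b|)\tfrac{b}{|b|}\cdot(a-b)\gtrsim \mathrm{G}(|a-b|)$ valid under \eqref{LiebCond}. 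Next I invoke the interior $C^{1,\alpha}$ (indeed Lipschitz and gradient-Hölder) estimates for $\mathrm{G}$-harmonic functions due to Lieberman, which give $\|\nabla h\|_{L^\infty(B_{r/2})}^{}$ and a Campanato-type decay for $\nabla h$. Transferring this decay to $u$ via the comparison estimate above produces, for a suitable small radius ratio $\tau$, an inequality of the form
\[
\phi(\tau r)\;\le\;\tfrac12\,\phi(r)+\mathrm{C}\big(r^{\beta}+r^{\gamma}\big),
\]
where $\phi(r)$ is a (modular) Campanato excess of $\nabla u$ on $B_r$; the Appendix's Campanato-in-modulars characterization then upgrades the summability of this excess to $\nabla u\in C^{0,\sigma}_{\mathrm{loc}}$, in particular $\nabla u\in L^\infty_{\mathrm{loc}}$, with the asserted dependence of the constant on $\delta,g_0,\beta,\kappa$ and $\Omega'$.

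The main obstacle I expect is the passage between \emph{modulars} and \emph{norms} throughout: almost-minimality is naturally an inequality between modulars $\mathcal{J}_{\mathrm{G}}$, Lieberman's regularity theory is stated in terms of $\sup|\nabla h|$, and the Campanato machinery must be rebuilt for modular excesses rather than $L^p$ excesses — this is precisely the gap the paper's Appendix fills. Concretely, the delicate points are (i) the quantitative convexity estimate for $\mathrm{G}$ that makes the energy comparison with $h$ quantitative, with constants depending only on $\delta,g_0$; (ii) closing the iteration $\phi(\tau r)\le\frac12\phi(r)+\text{error}$ when the error terms are only powers of $r$ and $\mathrm{G}$ is not homogeneous, so one cannot simply rescale; and (iii) propagating the smallness condition $\tfrac{n(\delta+1)}{n-(\delta+1)}>g_0+1$ to ensure the comparison errors are genuinely subordinate to the leading modular term via the Orlicz--Sobolev embedding. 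Once the Campanato excess is shown to decay, converting it back to the stated $L^\infty$ bound on $\nabla u$ and tracking the scaling in $M=\mathrm{G}^{-1}(1+\int_\Omega\mathrm{G}(|\nabla u|))$ is routine.
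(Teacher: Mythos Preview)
Your proposal has a genuine gap at the core iteration step. The comparison estimate you write,
\[
\int_{B_r}\mathrm{G}(|\nabla u-\nabla h|)\,dx \;\le\; \mathrm{C}\big(\kappa r^{\beta}\mathcal{J}_{\mathrm{G}}(u,B_r)+|B_r|\big),
\]
carries an error $|B_r|\sim r^n$ coming from the characteristic function $\chi_{\{u>0\}}$. For a Campanato iteration on the gradient excess $\phi(r)=\int_{B_r}\mathrm{G}(|\nabla u-q_r|)\,dx$ to yield $\nabla u\in C^{0,\gamma}$ you need $\phi(r)\lesssim r^{n+\alpha(\delta+1)}$; the $r^n$ error is therefore \emph{exactly critical} and cannot be absorbed, so the inequality $\phi(\tau r)\le\tfrac12\phi(r)+\mathrm{C}(r^{\beta}+r^{\gamma})$ does not close as you state it. Lieberman's $C^{1,\alpha}$ estimate for $h$ alone does not help here, because the obstruction is in the comparison term, not in the regularity of the replacement.

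The paper's fix is a \emph{dichotomy} (Lemma~\ref{lem.dichotomy}): either the averaged energy $\intav{B_\eta}\mathrm{G}(|\nabla u|)$ drops by a definite factor, or $\nabla u$ is $\varepsilon$-close in modular to a vector $q$ with $|q|$ bounded below by a multiple of $a$. In the second alternative one proves (Lemma~\ref{lemalargo}, estimate \eqref{eq.boundmeasure}) that $u>0$ on most of $B_{1/2}$, namely $|\{u=0\}\cap B_{1/2}|\le\mathrm{C}\varepsilon^{\delta+1+\gamma}$, and \emph{this} is where the hypothesis $(\delta+1)^{\ast}>g_0+1$ is actually used: it guarantees $\gamma=\tau(\delta+1)^{\ast}-(\delta+1)>0$ via the Orlicz--Sobolev embedding applied to $u-\mathfrak{l}$ for the linear function $\mathfrak{l}(x)=q\cdot x+b$. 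Only after replacing $|B_r|$ by this strictly subordinate quantity can the Campanato iteration (Corollary~\ref{cor}) run; one also needs the linearization of Lemma~\ref{lem.ellipticity}, which requires $|q|$ bounded away from zero, to obtain oscillation decay for $\nabla v-q$ rather than merely for $\nabla v$. Your description of the role of the exponent condition --- as supplying integrability in a Caccioppoli step --- is not the mechanism; it is the smallness of the zero set when the gradient is large.
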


We stress that the nonlinear setting driven by $\mathcal{J}_{\mathrm{G}}(u,\Omega)$ provides further technical obstacles, e.g. the sum of two weak solutions is no longer a weak solution. Therefore, it plays an essential role to develop a theory of $g-$harmonic replacements for our nonlinear scenario to overcome such complications.

Furthermore, we point out that the regularity results in this work do not follow from the nowadays classical literature from calculus of variations for operators with Orlicz-Sobolev structure: Indeed, the integrand of the $\mathrm{G}-$energy functional has a discontinuous signature, instead, the non-standard growth scenarios that dealt with continuous integrand or even Lipschitz continuous one, see e.g. \cite{adams2003sobolev} and \cite{lieberman1991natural} for such related topic.

Concerning the strategy of the proof of Theorem \ref{thm.main}: a dichotomy result is the main step to establishing the optimal Lipschitz continuity. In effect, the dichotomy property ensures that either the average of the $\mathrm{G}-$energy of an almost minimizer to $\mathcal{J}_{\mathrm{G}}$ decreases in a suitable ball or the Luxemburg norm between its gradient and a  fixed vector becomes sufficiently small in a proper sense.

Summarizing, either the average of the $\mathrm{G}-$energy of an almost minimizer decreases in a smaller domain, or the almost minimizer is  close enough to a linear profile (with good a priori estimates). Once this dichotomy is proved, the next step is to show that one of such alternatives might be improved. Precisely, it will hold a kind of geometric improved of flatness, namely, if an almost minimizer is close to (in the sense of the average of the $\mathrm{G}-$energy) a suitable linear profile, in a ball, then it will be closer to a possibly other linear profile in a smaller ball. Therefore, by iterating such a reasoning, together with the dichotomy property, provides the desired Lipschitz regularity of almost minimizers to $\mathcal{J}_{\mathrm{G}}$,

Finally, in our scenario, we need to show that the sum of $\mathrm{G}-$harmonic replacements solve an equation for which $C^{1, \alpha}$ estimates hold. In effect, this holds true using a linearization strategy, together with the assumptions of the dichotomy. The main difficulty appears in the proof of the improvement of dichotomy’s second alternative. The original strategy to solve such an obstacle  strongly relies on De Silva-Savin's work \cite{DeSS20}, where the authors often use harmonic replacements as competitors. This property holds, for harmonic replacements allowing achieve estimates for the average of their energy, exploiting Schauder estimates instead of gradient estimates.

\section*{Some extensions and further comments}\label{Extensions}

We will present a number of interesting building-block models in which our results work as well.

\begin{enumerate}

  \item {\bf Models arising of non-autonomous functionals:}

First, by considering the quantitative results (e.g. regularity estimates, Harnack inequality, see references below) we can to deal with the following model cases:

\begin{itemize}

  \item {\bf Problems with $(p\& q)-$growth.}
$$
 u_0+W_0^{1,p}(\Omega) \ni w \mapsto \int_{\Omega} \left(\mathcal{H}_0(x,\nabla w)+\chi_{\{w>0\}}\right)dx,
$$
as long as the integrand $\mathcal{H}_0$ enjoys an appropriate double phase type structure
$$
   L_1 \cdot \left(\frac{1}{p}|\xi|^p+\frac{1}{q}|\xi|^q\right)\leq \mathcal{H}_0(x,\xi)\leq L_2 \cdot \left(\frac{1}{p}|\xi|^p+\frac{1}{q}|\xi|^q\right),
$$
where $0<L_1 \le L_2< \infty$ and   $1<p<q<\infty$, which make possible to access available existence/regularity results for weak solutions. We recommend to reader see the series of  fundamental manuscripts \cite{BCM15}, \cite{CM15I}, \cite{DeFM19} , \cite{DeFM20} and  \cite{DeFO19} for related topics.

  \item {\bf Problems with Uhlenbeck's type structure.}

    We also stress that our approach is particularly refined and quite far-reaching in order to be employed in other classes of problems. Indeed,

	We can also extended our results for Musielak–Orlicz Spaces with non-homogeneous term as follow (see \cite[\S 2.3]{DHHR11})
\[
            \displaystyle u_0+W^{1, \varphi}(\Omega)\ni v \mapsto \int_{\Omega} \left(\varphi(x, |\nabla v|) +\chi_{\{v>0\}}\right)dx
\]
     for $(x, \xi) \mapsto \varphi(x, |\xi|)$ with particular structure, including:

\begin{enumerate}
  \item[\checkmark] $p-$growth, i.e., $\varphi(x,|t|) = \frac{1}{p}|t|^p$;
  \item[\checkmark] Orlicz growth, i.e., $\varphi(x,|t|) = \mathrm{G}(|t|)$;
  \item[\checkmark] Multi phase growth, i.e., $\displaystyle \varphi(x,|t|) = \frac{1}{p}|t|^p + \sum_{i=1}^{k} \mathfrak{a}_i(x)\frac{1}{q_i}|t|^q$ for $\le \mathfrak{a}_i \in C^{0, 1}(\Omega)$;
\end{enumerate}
just to mention a few, (see \cite{Chl18} and \cite[Section 2]{HO19}).

In this case, $\varphi$ must satisfy the Uhlenbeck's structure: for constants $1< p\le q < \infty$
$$
 t \mapsto \varphi(x, t) \quad \text{is in } \quad C^1([0, \infty)) \cap C^2((0, \infty)) \quad \text{and} \quad p-1 \leq \frac{t \varphi^{\prime \prime}(x, t)}{\varphi^{\prime}(x, t)} \leq \max_{1\le i \le k}\{q_i-1\}.
$$
\end{itemize}

\item {\bf Models with generalized doubly degenerate growth.}

Finally, we may consider a wide class of nonlinear degenerate models with non standard growth properties (cf. \cite{BO20}, and \cite{Chl18} for a survey). An archetypical example we have in mind concerns models of the form
$$
\mathcal{J}_{\mathcal{G}_{\mathrm{G}, \mathrm{H}}}(u,\Omega) \defeq  \int_{\Omega} \left(\mathcal{G}_{\mathrm{G}, \mathrm{H}}(x, |\nabla u|) +\chi_{\{u>0\}}\right)dx
$$
where
$$
\mathcal{G}_{\mathrm{G}, \mathrm{H}}(x, \xi) \defeq \mathrm{G}(|\xi|)+\mathfrak{a}(x)\mathrm{H}(|\xi|) \quad \text{with}\quad 0\le\mathfrak{a} \in C^{0, 1}(\Omega),
 $$
for Young functions $\mathrm{G}, \mathrm{H}: [0, \infty) \to [0, \infty)$ with $\mathrm{G}, \mathrm{H} \in \Delta_2 \cap \nabla_2$, $\mathrm{G} \prec \mathrm{H} \prec \mathrm{G}^{1+\frac{1}{n}}$, and satisfying the Lieberman's condition: there exist constants $\mathrm{c}_{\mathrm{G}}, \mathrm{c}^{\prime}_{\mathrm{G}}, \mathrm{c}_{\mathrm{H}}, \mathrm{c}^{\prime}_{\mathrm{H}} \geq 1$ such that
$$
\mathrm{c}^{\prime}_{\mathrm{G}} \leq \frac{t\mathrm{G}^{\prime}(t)}{\mathrm{G}(t)} \leq \mathrm{c}_{\mathrm{G}} \quad \text{and} \quad \mathrm{c}^{\prime}_{\mathrm{H}} \leq \frac{t\mathrm{H}^{\prime}(t)}{\mathrm{H}(t)} \leq \mathrm{c}_{\mathrm{H}} \quad \text{for all} \quad  t>0.
$$

\end{enumerate}

\bigskip

The rest of the paper is organized as follows: in Section \ref{sec.prel} we present the preliminary definitions and results that will be needed throughout the paper; Section \ref{sec.dichot} is devoted to the proof of the most important technical results of the paper, namely Lemmas \ref{lem.dichotomy} and \ref{lemalargo} and, as a consequence, the crucial interior gradient estimate in Corollary \ref{cor}. In Section \ref{sec.main} we prove the main result of this paper, i.e. Theorem \ref{thm.main}. Finally, the Appendix contains the proof of the Campanato-type regularity result that is used in Corollary \ref{cor}, summarized in Theorem \ref{campanato}.

\section{Preliminaries}\label{sec.prel}

In this section we present some preliminary definitions and results that will be used in the proofs of the main results of the manuscript. We start with some basic facts regarding Young functions and Orlicz-Sobolev spaces; these are rather well known but we include them for the sake of completeness.

\subsection{Young functions}
An application $\mathrm{G}\colon[0,\infty)\longrightarrow [0,\infty)$ is said to be a  \emph{Young function} if it admits the integral representation $\displaystyle \mathrm{G}(t)=\int_0^t g(\tau)\,d\tau$, where $g$ is a right continuous function defined on $[0,\infty)$ and satisfying:

$$
\left\{
\begin{array}{l}
g(0)=0, \quad g(t)>0 \text{ for } t>0,   \\
g \text{ is nondecreasing on } (0,\infty), \\
\displaystyle \lim_{t\to\infty}g(t)=\infty.
\end{array}
\right.
$$

From these properties it is easy to see that a Young function $\mathrm{G}$ is continuous, nonnegative, strictly increasing and convex on $[0,\infty)$. Further, we recall that we may extend $g$ to the whole $\R$ in an odd fashion: for $t<0$ $g(t)=-g(-t)$. And we may (and will) assume without loss of generality that $\mathrm{G}(1)=\mathrm{G}^{-1}(1)=1$. Note that $\mathrm{G}$ is strictly increasing and hence $\mathrm{G}^{-1}$ is well defined.

In this work we will consider the class of Young functions such that $g=\mathrm{G}^{\prime}$ is an absolutely continuous function that satisfies the condition
\begin{equation}\label{eq.lieberman}
\delta\leq \frac{tg^{\prime}(t)}{g(t)}\leq g_0
\end{equation}
for some constants $1<\delta\leq g_0$. This condition was first considered in the seminal work of G. Lieberman \cite{lieberman1991natural} and is the analogous to the ellipticity condition in the linear theory. For our purposes, we will make the (rather standard assumption) that
\begin{equation}\label{eq.exponentes}
(\delta+1)^\ast>g_0+1
\end{equation}
where $(\delta+1)^\ast$ is the Sobolev conjugate of $(\delta+1)$, i.e. $(\delta+1)^\ast=\frac{n(\delta+1)}{n-(\delta+1)}$. We will also assume that
\[
g\text{ is convex.}
\]
This last assumption is analogous to the degenerate setting in the homogeneous problem, i.e. the case where $p\geq 2$.

Equation \eqref{eq.lieberman} implies that that $\mathrm{G}$ satisfies
\begin{equation}\label{eq.delta2}
\delta+1\leq \frac{tg(t)}{\mathrm{G}(t)} \leq g_0+1;
\end{equation}
this condition is equivalent to ask $\mathrm{G}$ and $\tilde{\mathrm{G}}$ to satisfy the \emph{$\Delta_2$ condition or doubling condition}, i.e.,
\begin{equation}\label{Delta_2cond}
\mathrm{G}(2t)\leq 2^{g_0+1} \mathrm{G}(t), \qquad \tilde{\mathrm{G}}(2t) \leq 2^{1+\frac{1}{\delta}} \tilde{ \mathrm{G}}(t),
\end{equation}
where the \emph{complementary} function of a Young function $\mathrm{G}$ is the Young function $\tilde{\mathrm{G}}$ defined as
$$
\tilde{\mathrm{G}}(t)=\sup\{ta-\mathrm{G}(a)\colon a>0\}.
$$
The doubling condition allow us to split sums as
\begin{equation}\label{eq.sums}
\mathrm{G}(a+b)\leq 2^{g_0} (\mathrm{G}(a)+\mathrm{G}(b)).
\end{equation}
Moreover, integrating \eqref{eq.delta2} we have that
\begin{align}
t^{g_0+1}\leq \mathrm{G}(t) \leq t^{\delta+1},\quad\text{if }0<t\leq 1 \label{eq.delta2less1}\\
t^{\delta+1}\leq \mathrm{G}(t) \leq t^{g_0+1},\quad\text{if }t>1. \nonumber
\end{align}

As mentioned in the Introduction, the lack of homogeneity is a serious technical difficulty in our context. The following lemma gives the alternatives that hold in this scenario; its proof is elementary so we omit it.
\begin{lemma}
For $\theta\in[0,1]$ and  $t\geq 0$
$$
\mathrm{G}(\theta t) \leq \theta \mathrm{G}(t),
$$
and for $\theta\geq 1$ and  $t\geq 0$
$$
\mathrm{G}(\theta t)\geq \theta \mathrm{G}(t).
$$
More generally,  for any, $\theta,t\geq 0$
\begin{align}
\mathrm{G}(t)\min\left\{\theta^{\delta+1}, \theta^{g_0+1}\right\} \leq \mathrm{G}(\theta t) \leq \mathrm{G}(t)\max\left\{\theta^{\delta+1}, \theta^{g_0+1}\right\},  \label{minmax2} \\
\mathrm{G}^{-1}(t)\min\left\{\theta^\frac{1}{\delta+1}, \theta^\frac{1}{g_0+1}\right\} \leq \mathrm{G}^{-1}(\theta t) \leq \mathrm{G}^{-1}(t)\max\left\{\theta^\frac{1}{\delta+1}, \theta^\frac{1}{g_0+1}\right\}.\label{minmax3}
\end{align}
\end{lemma}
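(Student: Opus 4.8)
The plan is to deduce everything from convexity of $\mathrm{G}$ together with the normalization $\mathrm{G}(0)=0$ and the consequence \eqref{eq.delta2} of Lieberman's condition \eqref{eq.lieberman}. First I would dispatch the two one-sided bounds. For $\theta\in[0,1]$ the identity $\theta t=\theta\cdot t+(1-\theta)\cdot 0$ and convexity give $\mathrm{G}(\theta t)\le\theta\mathrm{G}(t)+(1-\theta)\mathrm{G}(0)=\theta\mathrm{G}(t)$; for $\theta\ge1$ one applies this to $1/\theta\in(0,1]$ and the argument $\theta t$, namely $\mathrm{G}(t)=\mathrm{G}\!\left(\tfrac1\theta\,(\theta t)\right)\le\tfrac1\theta\mathrm{G}(\theta t)$, which rearranges to $\theta\mathrm{G}(t)\le\mathrm{G}(\theta t)$. (These two bounds also follow a posteriori from \eqref{minmax2}, since $\delta+1>1$ forces $\theta^{\delta+1}\le\theta$ for $\theta\le1$ and $\theta^{\delta+1}\ge\theta$ for $\theta\ge1$.)

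For \eqref{minmax2} I would fix $t>0$ (the case $t=0$ being trivial) and study $\phi(\theta)\defeq\mathrm{G}(\theta t)$, which is positive and $C^1$ on $(0,\infty)$ with $\phi'(\theta)=t\,g(\theta t)$. Evaluating \eqref{eq.delta2} at the point $\theta t$ gives
\[
\frac{\delta+1}{\theta}\;\le\;\frac{d}{d\theta}\log\phi(\theta)\;=\;\frac{1}{\theta}\cdot\frac{\theta t\,g(\theta t)}{\mathrm{G}(\theta t)}\;\le\;\frac{g_0+1}{\theta}.
\]
Integrating this between $1$ and $\theta$ (the only care being the orientation of the integral, hence of the inequality, according to whether $\theta\ge1$ or $\theta\le1$) yields $\theta^{\delta+1}\le\phi(\theta)/\phi(1)\le\theta^{g_0+1}$ when $\theta\ge1$ and $\theta^{g_0+1}\le\phi(\theta)/\phi(1)\le\theta^{\delta+1}$ when $0<\theta\le1$. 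Since $\phi(1)=\mathrm{G}(t)$ and, in each regime, the smaller of $\{\theta^{\delta+1},\theta^{g_0+1}\}$ is exactly the displayed lower bound and the larger is the displayed upper bound, the two cases collapse into \eqref{minmax2}.

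Finally, \eqref{minmax3} I would obtain from \eqref{minmax2} by a change of variables on the inverse function. With $t>0$ fixed, put $a\defeq\mathrm{G}^{-1}(\theta t)$, $b\defeq\mathrm{G}^{-1}(t)>0$ and $\mu\defeq a/b>0$, so that $\mathrm{G}(\mu b)=\mathrm{G}(a)=\theta t=\theta\,\mathrm{G}(b)$. Applying \eqref{minmax2} with $b$ in place of $t$ and $\mu$ in place of $\theta$ gives $\min\{\mu^{\delta+1},\mu^{g_0+1}\}\le\theta\le\max\{\mu^{\delta+1},\mu^{g_0+1}\}$; solving these scalar inequalities for $\mu$ — splitting into $\mu\ge1$, which is equivalent to $\theta\ge1$, and $0<\mu\le1$, equivalent to $0<\theta\le1$ — produces $\theta^{1/(g_0+1)}\le\mu\le\theta^{1/(\delta+1)}$ in the first case and $\theta^{1/(\delta+1)}\le\mu\le\theta^{1/(g_0+1)}$ in the second. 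Multiplying through by $b=\mathrm{G}^{-1}(t)$ and recording the bounds with $\min$ and $\max$ is precisely \eqref{minmax3}. There is no genuine obstacle in this lemma; the one thing to keep straight is that the sign of $\theta-1$ is tied to the sign of $\mu-1$ (because $\mathrm{G}$ is increasing), which is what makes the $\min$/$\max$ formulation uniform across both regimes.
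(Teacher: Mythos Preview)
Your proof is correct. The paper itself omits the proof of this lemma, stating only that ``its proof is elementary so we omit it,'' so there is nothing to compare against; your argument---convexity for the first two inequalities, integrating the logarithmic derivative bound coming from \eqref{eq.delta2} for \eqref{minmax2}, and a change of variables on the inverse for \eqref{minmax3}---is exactly the standard elementary route one would expect.
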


\subsection{Orlicz-Sobolev spaces}\label{O-S-Subsection}

Given a Young function $\mathrm{G}$ and a bounded open set $\Omega\subset \mathbb{R}^n$, we consider the spaces $L^{\mathrm{G}}(\Omega)$ and $W^{1,\mathrm{G}}(\Omega)$ defined as follows:
$$
\begin{array}{rcl}
  L^{\mathrm{G}}(\Omega) & = & \{u : \mathbb{R} \to \mathbb{R} \quad \text{measurable such that} \quad \Phi_{\mathrm{G},\Omega}(u) < \infty\} \\
  W^{1,\mathrm{G}}(\Omega) & = & \{u \in L^{\mathrm{G}}(\Omega) \quad \text{such that} \quad  \Phi_{\mathrm{G},\Omega}(|\nabla u|) < \infty\}
\end{array}
$$
where the modular $\Phi_{\mathrm{G},\Omega}(u)$ stands for
$$
\displaystyle \Phi_{\mathrm{G},\Omega}(u) = \int_{\Omega} \mathrm{G}(|u|) dx.
$$
These spaces are endowed with the so-called Luxemburg norm defined as follows:
$$
\displaystyle \|u\|_{L^{\mathrm{G}}(\Omega)} = \inf\left\{ \lambda > 0: \,\,\, \Phi_{\mathrm{G},\Omega}\left(\frac{u}{\lambda}\right) \le 1
\right\}
$$
and
$$
\|u\|_{W^{1,\mathrm{G}}(\Omega)} = \|u\|_{L^{\mathrm{G}}(\Omega)}+\|\nabla u\|_{L^{\mathrm{G}}(\Omega)}
$$
In general, it is easy to see that
\begin{equation}\label{eq.normandmoular}
\|w\|_{L^{\mathrm{G}}(\Omega)}\leq\max\left\{\Phi_{\mathrm{G},\Omega}(w)^\frac{1}{g_0+1},\Phi_{\mathrm{G},\Omega}(w)^\frac{1}{\delta+1}\right\}.
\end{equation}

\subsection{Technical results concerning $g-$harmonic functions}

The notion of $g-$harmonic function will of course be of great importance in our work; In effect, $g-$harmonic functions are weak solutions of the equation $\Delta_gv=0$ where $\Delta_g$ is the $g-$Laplacian operator given by
\[
\Delta_gv=\diver\left(g(|\nabla v|)\frac{\nabla v}{|\nabla v|}\right).
\]
More precisely, we will be interested in unique weak solutions of the Dirichlet problem
\begin{equation}\label{eq.gharm}
\left\{
\begin{array}{rcrcl}
  \Delta_g v & = & 0 & \text{in} & \Omega \\
  v & = & h & \text{on} & \partial \Omega
\end{array}
\right.
\end{equation}
Such weak solutions can be constructed as minimizers of the energy functional $\displaystyle u\rightarrow\int_\Omega \mathrm{G}(|\nabla u|)\:dx$ (with fixed Dirichlet boundary condition) via the Direct Method of the Calculus of Variations.

The following proposition will be used, its proof is contained in the Lieberman's fundamental work \cite[Section 5]{lieberman1991natural}. Before stating it, we recall that a constant will be called universal if it depends only on $n,\delta$ and $g_0$ (and eventually $\beta$ or $\kappa$ in Definition \ref{almostmin}); multiplicative constants will be denoted by $C$ and in the proofs they may change from line to line.

\begin{proposition}\label{prop.reg}
Let $v\in W^{1,\mathrm{G}}(B_R)$ be a $g-$harmonic function. Then,
\begin{enumerate}

\item there exists a universal constant $\mathrm{C}>0$ such that for any $x\in B_{R/2}$
\begin{equation}\label{eq.supavg}
\sup_{B_{R/4}(x)}\mathrm{G}\left(|\nabla v|\right)\leq \mathrm{C}\intav{B_{R/2}(x)}\mathrm{G}\left(|\nabla v|\right)\:dx.
\end{equation}

\item there exist universal constants $\mathrm{C}>0$ and $\alpha\in(0,1)$ such that
\begin{equation}\label{eq.osc}
\osc_{B_\rho}|\nabla v|\leq \mathrm{C}  \sup_{B_R}|\nabla v|\left(\frac{\rho}{R}\right)^\alpha,\quad 0<\rho<R.
\end{equation}
\end{enumerate}
\end{proposition}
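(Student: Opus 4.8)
This proposition records the interior $C^{1,\alpha}$ theory for the $g$-Laplacian; although one may simply cite \cite[Section~5]{lieberman1991natural}, let me outline the route I would follow to reconstruct it, which is the standard argument for quasilinear equations with Uhlenbeck structure carried out on a non-degenerate approximation. I would fix the $g$-harmonic function $v$ in $B_R$ and, for $\varepsilon\in(0,1)$, choose a smooth Young function $\mathrm{G}_\varepsilon$ with $g_\varepsilon=\mathrm{G}_\varepsilon^{\prime}$ satisfying $g_\varepsilon^{\prime}>0$ everywhere (so that $\Delta_{g_\varepsilon}w=0$ is a smooth, non-degenerate elliptic equation for each fixed $\varepsilon$) and with the structure ratio $t g_\varepsilon^{\prime}(t)/g_\varepsilon(t)$ trapped between $1$ and $g_0$ uniformly in $\varepsilon$ -- the classical regularization built from $g\big(\sqrt{\varepsilon^2+t^2}\,\big)\,t/\sqrt{\varepsilon^2+t^2}$ does this. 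Let $v_\varepsilon$ be the unique minimizer of $w\mapsto\int_{B_R}\mathrm{G}_\varepsilon(|\nabla w|)\,dx$ with $w=v$ on $\partial B_R$, so $\Delta_{g_\varepsilon}v_\varepsilon=0$, and recall $v_\varepsilon\to v$ in $W^{1,\mathrm{G}}(B_R)$. The plan is to prove \eqref{eq.supavg} and \eqref{eq.osc} for $v_\varepsilon$ with constants independent of $\varepsilon$ and then let $\varepsilon\to0$.

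For part (1): since $\Delta_{g_\varepsilon}v_\varepsilon=0$ is smooth and non-degenerate, $v_\varepsilon$ is regular enough to be differentiated in each direction $e_k$; multiplying the differentiated equation by $\partial_k v_\varepsilon$ and summing over $k$, a direct computation shows that $w_\varepsilon:=|\nabla v_\varepsilon|^2$ is a nonnegative weak subsolution $\diver(A_\varepsilon\nabla w_\varepsilon)\ge0$ of a linear equation with
\[
A_\varepsilon=\tfrac{1}{2}\,\frac{g_\varepsilon(|\nabla v_\varepsilon|)}{|\nabla v_\varepsilon|}\,I+\tfrac{1}{2}\Big(g_\varepsilon^{\prime}(|\nabla v_\varepsilon|)-\frac{g_\varepsilon(|\nabla v_\varepsilon|)}{|\nabla v_\varepsilon|}\Big)\frac{\nabla v_\varepsilon\otimes\nabla v_\varepsilon}{|\nabla v_\varepsilon|^2},
\]
whose eigenvalue ratio equals $t g_\varepsilon^{\prime}(t)/g_\varepsilon(t)$ evaluated at $t=|\nabla v_\varepsilon|$, hence $\le g_0$ independently of $\varepsilon$ -- this is precisely where the two-sided Lieberman bound \eqref{eq.lieberman} is used. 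Since $\delta>1$, the function $s\mapsto\mathrm{G}_\varepsilon(\sqrt{s})$ is increasing and convex, so $\mathrm{G}_\varepsilon(|\nabla v_\varepsilon|)=\mathrm{G}_\varepsilon(\sqrt{w_\varepsilon})$ is again a nonnegative subsolution of the same operator $\diver(A_\varepsilon\nabla\,\cdot\,)$. Applying the De Giorgi--Nash--Moser local boundedness estimate for subsolutions (which bounds the supremum of a nonnegative subsolution over a ball by its average over the concentric ball of twice the radius, the exponent one on the right being reached by the usual Moser iteration) then gives \eqref{eq.supavg} for $v_\varepsilon$, and passing to the limit yields \eqref{eq.supavg}.

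For part (2): the estimate just obtained provides a uniform interior bound $\sup_{B_{R/2}}|\nabla v_\varepsilon|<\infty$. On a ball $B_\rho(x_0)\subset B_{R/2}$ I would run the Tolksdorf--DiBenedetto--Lieberman alternative: either $\sup_{B_{\rho/2}(x_0)}|\nabla v_\varepsilon|\le\tfrac{1}{2}\sup_{B_\rho(x_0)}|\nabla v_\varepsilon|$, in which case the oscillation of $|\nabla v_\varepsilon|$ decays geometrically for free; or $|\nabla v_\varepsilon|$ is comparable to its supremum throughout $B_\rho(x_0)$, in which case, for each $k$, the equation satisfied by $\partial_k v_\varepsilon$ is uniformly elliptic in divergence form with bounded measurable coefficients (ellipticity depending only on $\delta$ and $g_0$), so the De Giorgi--Nash theorem gives an interior H\"older estimate for $\nabla v_\varepsilon$ on $B_{\rho/2}(x_0)$. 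Iterating this dichotomy over dyadic radii produces universal constants $\mathrm{C}>0$ and $\alpha\in(0,1)$ with $\osc_{B_\rho}|\nabla v_\varepsilon|\le\mathrm{C}\sup_{B_R}|\nabla v_\varepsilon|(\rho/R)^\alpha$, uniformly in $\varepsilon$; in particular $v_\varepsilon\to v$ in $C^1_{\mathrm{loc}}(B_R)$ and \eqref{eq.osc} follows.

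The main obstacle is not any single step -- each is classical in the $p$-Laplacian case -- but the bookkeeping imposed by the lack of homogeneity of $\mathrm{G}$: one cannot normalize through a rescaling $\mathrm{G}(t)\mapsto\lambda^{-p}\mathrm{G}(\lambda t)$, so every constant must be tracked through \eqref{eq.lieberman}, \eqref{eq.delta2} and \eqref{Delta_2cond}, and the approximating family $\mathrm{G}_\varepsilon$ must be designed so that these structural inequalities persist with $\varepsilon$-independent constants. This is exactly the analysis carried out in \cite[Section~5]{lieberman1991natural}, which we therefore invoke.
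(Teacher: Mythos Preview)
The paper does not give a proof of this proposition; it simply records that the proof is contained in \cite[Section~5]{lieberman1991natural}. Your proposal does the same thing and, in addition, sketches correctly the standard Uhlenbeck--Tolksdorf--DiBenedetto--Lieberman argument (regularization to a non-degenerate problem, subsolution property of $\mathrm{G}(|\nabla v|)$ for part~(1), and the degenerate/non-degenerate alternative for part~(2)); there is nothing to correct.
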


\begin{remark} \label{remark.reg}
The estimates in Proposition \eqref{prop.reg} still hold under the more general assumption that $v$ satisfies $\diver(\mathbb{A}(x)\nabla v)=0$ with $x \mapsto \mathbb{A}(x)$ satisfying the structural condition:
\[
\lambda\frac{g(|z|)}{|z|}|\xi|^2\leq \mathbb{A}(x)\xi\cdot\xi\leq \Lambda\frac{g(|z|)}{|z|}|\xi|^2.
\]
See \cite{lieberman1991natural} for details.
\end{remark}

Another important concept that will be used is that of harmonic replacement in a ball, which roughly speaking is the $g-$harmonic that coincides with a given function on the boundary of a ball:
\begin{definition}
The $g-$harmonic replacement of a function $u\in W^{1,G}(\Omega)$ in $B_r(x)$ is the unique $g-$harmonic function $v\in W^{1,G}(B_r(x))$ that coincides with $u$ on $\partial B_r(x)$ in the sense of traces, i.e. the unique weak solution of \eqref{eq.gharm} with $\Omega=B_r(x)$ and $h=u$.
\end{definition}

The following technical lemmas will be used later: the first one is a bound on the energy of difference between and function and its $g-$harmonic replacement and the other one is needed to ensure the desired ellipticity (in the sense of Remark \ref{remark.reg}) so that interior estimates apply:
\begin{lemma}\label{lem.harmrep}
Let $G$ a Young function satisfying \eqref{eq.lieberman} and such that $\mathrm{G}^{\prime}=g$ is a convex function, $u\in W^{1,G}(B_r)$ and $v$ the $g-$harmonic replacement of $u$ in $B_r$. Then the following inequality holds for a universal constant $\mathrm{C}>0$
$$
\int_{B_r}\mathrm{G}(|\nabla u-\nabla v|)\,dx\leq \mathrm{C}\int_{B_r}\mathrm{G}(|\nabla u|)-\mathrm{G}(|\nabla v|)\, dx.
$$
\end{lemma}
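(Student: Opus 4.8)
The plan is to exploit the minimality of the $g$-harmonic replacement $v$ together with a quantitative strong-convexity estimate for the modular integrand. Recall that $v$ minimizes $w \mapsto \int_{B_r}\mathrm{G}(|\nabla w|)\,dx$ among all $w$ with $w=u$ on $\partial B_r$; since $u$ itself is an admissible competitor we have $\int_{B_r}\mathrm{G}(|\nabla v|)\,dx \le \int_{B_r}\mathrm{G}(|\nabla u|)\,dx$, so the right-hand side of the claimed inequality is nonnegative. First I would write down the weak Euler--Lagrange equation for $v$, namely $\int_{B_r} g(|\nabla v|)\frac{\nabla v}{|\nabla v|}\cdot\nabla\varphi\,dx = 0$ for all $\varphi \in W^{1,\mathrm{G}}_0(B_r)$, and apply it with the admissible test function $\varphi = u - v$ (which vanishes on $\partial B_r$ in the trace sense). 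This gives the orthogonality relation
\[
\int_{B_r} g(|\nabla v|)\frac{\nabla v}{|\nabla v|}\cdot(\nabla u - \nabla v)\,dx = 0.
\]

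The key analytic input is a second-order (strong convexity) lower bound for the function $\xi \mapsto \mathrm{G}(|\xi|)$. Using that $g$ is convex and satisfies \eqref{eq.lieberman}, one has the standard vectorial inequality
\[
\mathrm{G}(|\xi|) - \mathrm{G}(|\eta|) - g(|\eta|)\frac{\eta}{|\eta|}\cdot(\xi-\eta) \;\geq\; c\,\mathrm{G}(|\xi-\eta|)
\]
for a universal constant $c>0$ (this is the Orlicz analogue of the well-known $p$-Laplacian inequality; in the degenerate/convex-$g$ regime the modular of the difference is controlled by the "Bregman distance" of $\mathrm{G}$, and one can prove it by reducing to the scalar function $\mathrm{G}$ along segments and using that $t\,g'(t)/g(t)\ge\delta>1$). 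I would state this as a separate elementary inequality (or cite the relevant lemma on $\mathrm{G}$-monotonicity already available in the Orlicz literature, e.g. Lieberman or Martinez--Wolanski), then integrate it over $B_r$ with $\xi = \nabla u(x)$ and $\eta = \nabla v(x)$. The linear term integrates to zero by the orthogonality relation above, leaving exactly
\[
c\int_{B_r}\mathrm{G}(|\nabla u - \nabla v|)\,dx \;\le\; \int_{B_r}\bigl(\mathrm{G}(|\nabla u|) - \mathrm{G}(|\nabla v|)\bigr)\,dx,
\]
which is the claim with $\mathrm{C}=1/c$.

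The main obstacle I anticipate is establishing the strong-convexity inequality with the modular $\mathrm{G}(|\xi-\eta|)$ on the right-hand side — rather than the more commonly quoted form involving $(|\xi|+|\eta|)^{p-2}|\xi-\eta|^2$ or a quantity like $\int_0^1 g'(|\eta+s(\xi-\eta)|)|\xi-\eta|^2\,ds$. Passing from that "natural" quadratic-type remainder to a clean $\mathrm{G}(|\xi-\eta|)$ bound is precisely where the hypothesis that $g$ is convex (the analogue of $p\ge2$) is used: convexity of $g$ lets one bound $g'$ from below along the segment by its value near the endpoint of largest modulus, and then \eqref{eq.delta2}–\eqref{minmax2} convert the resulting expression into $\mathrm{G}(|\xi-\eta|)$ up to universal constants. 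I would handle the degenerate directions (where $\nabla v = 0$) by the usual approximation/limiting argument. Everything else — admissibility of $u-v$ as a test function, which rests on $u,v\in W^{1,\mathrm{G}}(B_r)$ with equal traces, and measurability/integrability of all integrands, which follows from the $\Delta_2$ condition \eqref{Delta_2cond} — is routine.
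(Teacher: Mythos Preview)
Your proposal is correct and takes essentially the same approach as the paper. The paper also tests the Euler--Lagrange equation for $v$ with $\varphi=u-v$, then writes $\mathrm{G}(|\nabla u|)-\mathrm{G}(|\nabla v|)$ as an integral along the segment $u^s=su+(1-s)v$, applies the monotonicity inequality $\bigl(g(|a|)\tfrac{a}{|a|}-g(|b|)\tfrac{b}{|b|}\bigr)\cdot(a-b)\ge C\,\mathrm{G}(|a-b|)$ (quoted from \cite{cantizano2021continuity}), and integrates out the segment parameter via \eqref{minmax2}; your Bregman-divergence inequality is precisely what this computation yields pointwise, so the only difference is whether the segment argument is packaged as a separate pointwise lemma or carried out directly inside the integral over $B_r$.
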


\begin{proof}
As $v$ is a weak solution of $\Delta_g u=0$, we know that
\begin{equation}\label{weak}
\int_{B_r} g(|\nabla v|)\frac{\nabla v}{|\nabla v|}\nabla\varphi \,dx=0,
\end{equation}
for all $\varphi\in W_0^{1,\mathrm{G}}(B_r)$.

We consider the following convex combination
$$
u^s(x)=su(x)+(1-s)v(x)\qquad 0\leq s\leq1
$$
Observe that $u^0=v$ and $u^1=u$. Now using that $u-v\in W^{1,G}_0(\Omega)$ and \eqref{weak}, we have
\begin{align*}
\int_{B_r}\mathrm{G}(|\nabla u|)-\mathrm{G}(|\nabla v|)\, dx &=\int_{B_r}\mathrm{G}(|\nabla u^1|)-\mathrm{G}(|\nabla u^0|)\, dx\\
&=\int_{B_r}\left(\int_0^1 \frac{d}{ds}\mathrm{G}(|\nabla u^s|)\,ds\right)\,dx\\
&=\int_{B_r}\left(\int_0^1 g(|\nabla u^s|)\frac{\nabla u^s}{|\nabla u^s|}\cdot\nabla (u-v)\,ds\right)\,dx\\
&=\int_{B_r}\int_0^1 \left(g(|\nabla u^s|)\frac{\nabla u^s}{|\nabla u^s|}-g(|\nabla v|)\frac{\nabla v}{|\nabla v|}\right)\cdot\nabla (u-v)\,ds\,dx.
\end{align*}
Next, note that $u^s(x)-v(x)=su(x)-sv(x)=s(u(x)-v(x))$; then using that
\[
\left(g(|a|)\frac{a}{|a|}-g(|b|)\frac{b}{|b|}\right)\cdot\nabla (a-b)\geq \mathrm{C} \mathrm{G}(|a-b|)
\]
for a constant $\mathrm{C}=\mathrm{C}(\delta)>0$ (see \cite[Lemma 3.1]{cantizano2021continuity} for a proof) and \eqref{minmax2} the last above term is bounded in the following way
{\small{
$$
\begin{array}{rcl}
  \displaystyle \int_{B_r}\frac{1}{s}\int_0^1 \left(g(|\nabla u^s|)\frac{\nabla u^s}{|\nabla u^s|}-g(|\nabla v|)\frac{\nabla v}{|\nabla v|}\right)\cdot\nabla (u^s-v)\,ds\,dx & \ge  & \displaystyle  \mathrm{C}\int_0^1\frac{1}{s}\int_{B_r} \mathrm{G}(|\nabla u^s-\nabla v|)\,dx\,ds\\
   & \ge & \displaystyle  \mathrm{C}\int_0^1\frac{s^{g_0+1}}{s}\int_{B_r} \mathrm{G}(|\nabla u-\nabla v|)\,dx\,ds\\
   & \ge & \displaystyle  \frac{\mathrm{C}}{g_0+1}\int_{B_r} \mathrm{G}(|\nabla u-\nabla v|)\,dx
\end{array}
$$}}
and the desired inequality follows.
\end{proof}

\begin{lemma}\label{lem.ellipticity}
Let $q\in\R^n$ and $|h(x)|<\frac{|q|}{2}$. If $\overrightarrow{\mathbf{F}}(z)=g(|z|)\frac{z}{|z|}$ for a Young function satisfying \eqref{eq.lieberman} and
\[
\mathfrak{A}(x) \defeq \int_0^1D\overrightarrow{\mathbf{F}}\left(q+th(x)\right)\:dt
\]
then
\begin{equation}\label{eq.Aellip}
\lambda\frac{g(|q|)}{|q|}|\xi|^2\leq \mathfrak{A}(x)\xi\cdot\xi\leq \Lambda\frac{g(|q|)}{|q|}|\xi|^2
\end{equation}
for any $\xi\in \R^n\setminus\{0\}$ and for some constants $0<\lambda\leq\Lambda$ depending only on $\delta$ and $g_0$.
\end{lemma}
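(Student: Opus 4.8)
The plan is to compute the Jacobian $D\overrightarrow{\mathbf{F}}(z)$ explicitly, read off its eigenvalues from \eqref{eq.lieberman}, and then use the quasi-homogeneity of $g$ (another consequence of \eqref{eq.lieberman}) to transfer the resulting pointwise bounds at $z=q+th(x)$ into bounds in terms of $g(|q|)/|q|$, integrating in $t$ at the end.

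First, for $z\neq 0$ set $\hat z=z/|z|$. Differentiating $\overrightarrow{\mathbf{F}}(z)=g(|z|)\hat z$ gives
$$
D\overrightarrow{\mathbf{F}}(z)=\frac{g(|z|)}{|z|}\left(\mathrm{Id}-\hat z\otimes\hat z\right)+g^{\prime}(|z|)\,\hat z\otimes\hat z ,
$$
so that for any $\xi\in\mathbb{R}^n$,
$$
D\overrightarrow{\mathbf{F}}(z)\xi\cdot\xi=\frac{g(|z|)}{|z|}\left(|\xi|^2-(\hat z\cdot\xi)^2\right)+g^{\prime}(|z|)(\hat z\cdot\xi)^2 .
$$
By Cauchy--Schwarz the quantities $|\xi|^2-(\hat z\cdot\xi)^2$ and $(\hat z\cdot\xi)^2$ are nonnegative and sum to $|\xi|^2$; writing \eqref{eq.lieberman} as $\delta\, g(t)/t\le g^{\prime}(t)\le g_0\, g(t)/t$ and using $\delta>1$, both coefficients $g(|z|)/|z|$ and $g^{\prime}(|z|)$ belong to $[\,g(|z|)/|z|,\;g_0\,g(|z|)/|z|\,]$, hence
$$
\frac{g(|z|)}{|z|}|\xi|^2\le D\overrightarrow{\mathbf{F}}(z)\xi\cdot\xi\le g_0\,\frac{g(|z|)}{|z|}|\xi|^2 .
$$

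Next I would specialize this to $z=q+th(x)$. Since $|h(x)|<|q|/2$ (which in particular forces $q\neq0$), for every $t\in[0,1]$ one has $|q|/2<|q+th(x)|<3|q|/2$; writing $|q+th(x)|=s|q|$ with $s\in(1/2,3/2)$ and integrating \eqref{eq.lieberman} yields $\min\{s^{\delta},s^{g_0}\}\,g(|q|)\le g(s|q|)\le\max\{s^{\delta},s^{g_0}\}\,g(|q|)$, so there are constants $c_1=c_1(g_0)>0$ and $c_2=c_2(g_0)>0$ with
$$
c_1\,\frac{g(|q|)}{|q|}\le\frac{g(|q+th(x)|)}{|q+th(x)|}\le c_2\,\frac{g(|q|)}{|q|},\qquad t\in[0,1].
$$
Combining the last two displays and integrating over $t\in[0,1]$ in the definition of $\mathfrak{A}(x)$ gives \eqref{eq.Aellip} with $\lambda=c_1$ and $\Lambda=g_0\,c_2$, which depend only on $g_0$ (and on $\delta$ only through the requirement $\delta>1$).

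The computation is elementary; the only point requiring some care is the comparison of $g(|q+th(x)|)/|q+th(x)|$ with $g(|q|)/|q|$, which is precisely where the hypothesis $|h|<|q|/2$ enters, together with the quasi-homogeneity of $g$. One should also note that, since $|q+th(x)|>|q|/2>0$ along the whole segment, $D\overrightarrow{\mathbf{F}}$ is defined there (for a.e.\ $t$, as $g$ is only absolutely continuous), so the integral defining $\mathfrak{A}(x)$ is meaningful and the pointwise estimate above may be integrated without difficulty.
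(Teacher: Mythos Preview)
Your proof is correct and follows essentially the same approach as the paper: compute $D\overrightarrow{\mathbf{F}}(z)$, bound its quadratic form using \eqref{eq.lieberman}, then use $|q|/2<|q+th(x)|<3|q|/2$ together with the growth of $g$ to pass from $g(|z|)/|z|$ to $g(|q|)/|q|$. Your presentation is somewhat cleaner---you read off the eigenvalues from the spectral decomposition $D\overrightarrow{\mathbf{F}}(z)=\frac{g(|z|)}{|z|}(\mathrm{Id}-\hat z\otimes\hat z)+g'(|z|)\,\hat z\otimes\hat z$ rather than computing in coordinates, and you use the integrated form of \eqref{eq.lieberman} (quasi-homogeneity of $g$) instead of monotonicity of $g'$, so your argument does not invoke the convexity of $g$ assumed elsewhere in the paper.
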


\begin{proof}

We start by computing $D\overrightarrow{\mathbf{F}}$: if $i\neq j$
\[
D_i\overrightarrow{\mathbf{F}}_j(z) = D_i\left(g(|z|)\frac{z_j}{|z|}\right) = g^{\prime}(|z|)\frac{z_iz_j}{|z|^2}-g(|z|)\frac{z_iz_j}{|z|^{3}}
\]
while
\[
D_i\overrightarrow{\mathbf{F}}_i(z) = D_i\left(g(|z|)\frac{z_i}{|z|}\right)= g^{\prime}(|z|)\frac{z_i^2}{|z|^2}+g(|z|)\left(\frac{1}{|z|}-\frac{z_i^2}{|z|^3}\right).
\]
Then,
{\small{
\begin{align*}
D\overrightarrow{\mathbf{F}}(z)\xi\cdot\xi & = \sum_{i\neq j}\left(g^{\prime}(|z|)\frac{z_iz_j}{|z|^2}-g(|z|)\frac{z_iz_j}{|z|^{3}}\right)\xi_i\xi_j
+\sum_i\left(g^{\prime}(|z|)\frac{z_i^2}{|z|^2}+g(|z|)\left(\frac{1}{|z|}-\frac{z_i^2}{|z|^{3}}\right)\right)\xi_i^2 \\
 				 & = \sum_{i\neq j}\left(g^{\prime}(|z|)-\frac{g(|z|)}{|z|}\right)\frac{z_iz_j\xi_i\xi_j}{|z|^2}+g^{\prime}(|z|)|\xi|^2  \\
 				 & = \left(g^{\prime}(|z|)-\frac{g(|z|)}{|z|}\right)\frac{1}{|z|^2}\left(z\cdot\xi\right)^2+\frac{g(|z|)}{|z|}|\xi|^2  \\
 				 & \geq  \left(\frac{g(|z|)}{|z|}-\left(g^{\prime}(|z|)-\frac{g(|z|)}{|z|}\right)\right)|\xi|^2  \\
 				 & \geq  g_0\frac{g(|z|)}{|z|}|\xi|^2 \\
                 & \geq g^{\prime}(|z|)|\xi|^2.
\end{align*}}}
This implies
\[
\mathfrak{A}(x)\xi\cdot\xi \geq \int_0^1 g^{\prime}(|q+th(x)|)\:dt|\xi|^2\geq g^{\prime}\left(\frac{|q|}{2}\right)|\xi|^2\geq  \delta\frac{g\left(\frac{|q|}{2}\right)}{\frac{|q|}{2}}|\xi|^2\geq \lambda \frac{g\left(|q|\right)}{|q|}|\xi|^2
\]
where $\lambda \defeq 2^{1-g_0}\delta$, and we have used that $|q+th(x)|\geq \frac{|q|}{2}$ by hypothesis, that $g^{\prime}$ is increasing and the doubling condition for $g$. This gives the lower bound on \eqref{eq.Aellip}.

Similarly, we find that
\[
D\overrightarrow{\mathbf{F}}(z)\xi\cdot\xi \leq  \left(\frac{g(|z|)}{|z|}+\left(g^{\prime}(|z|)-\frac{g(|z|)}{|z|}\right)\right)|\xi|^2=g^{\prime}(|z|)|\xi|^2
\]
and
\[
\mathfrak{A}(x)\xi\cdot\xi \leq g^{\prime}\left(\frac{3|q|}{2}\right)|\xi|^2\leq g_0\frac{g\left(\frac{3|q|}{2}\right)}{\frac{3|q|}{2}}|\xi|^2\leq \Lambda \frac{g\left(|q|\right)}{|q|}|\xi|^2
\]
now with $\Lambda \defeq \left(\frac{3}{2}\right)^{g_0-1}g_0$, this time using that $|q+th(x)|\leq \frac{3|q|}{2}$.
\end{proof}

\subsection{Almost minimizers}

In this section we give the rigorous definition of an almost minimizer and show that they satisfy a nice scaling property:

\begin{definition}\label{almostmin}
A function $u\in W^{1,\mathrm{G}}(\Omega)$ is an almost minimizer of $\mathcal{J}_G$ with exponent $\beta>0$ and constant $\kappa\geq0$ if
\begin{enumerate}

\item $u\geq0$ a.e. in $\Omega$;

\item for any ball $B_r(x)$ such that $\overline{B_r(x)}\subset\Omega$ and any $v\in W^{1,G}(B_r(x))$ such that $v=u$ on $\partial B_r(x)$ in the sense of traces (i.e. $v-u\in W_0^{1,G}(B_r(x))$) it holds that
\[
\mathcal{J}_{\mathrm{G}}(u,B_r(x))\leq (1+\kappa r^\beta)\mathcal{J}_{\mathrm{G}}(v,B_r(x)).
\]
\end{enumerate}
\end{definition}

Almost minimizers satisfy the following scaling property:
\begin{lemma}
If $u\in W^{1,G}(B_1)$ is an almost minimizer of $\mathcal{J}_G$ in $B_1$ with exponent $\beta>0$ and constant $\kappa$ and $0<r<1$ then $u_r(x) \defeq \frac{u(rx)}{r}$ is an almost minimizer of $\mathcal{J}_G$ in $B_{1/r}$ with exponent $\beta>0$ and constant $r^\beta\kappa$.
\end{lemma}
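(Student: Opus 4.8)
The plan is to verify the two defining properties of an almost minimizer for the rescaled function $u_r(x)=\frac{u(rx)}{r}$, working on balls $B_\rho(y)$ with $\overline{B_\rho(y)}\subset B_{1/r}$. First, I would record the two elementary facts about this scaling: nonnegativity of $u_r$ is immediate since $u\geq 0$ and $r>0$, and $\nabla u_r(x)=\nabla u(rx)$, so that $|\nabla u_r(x)|=|\nabla u(rx)|$ pointwise. I would also note the change of variables $x\mapsto rx$ maps $B_\rho(y)$ onto $B_{r\rho}(ry)$, with Jacobian $r^n$, and that $\overline{B_\rho(y)}\subset B_{1/r}$ translates exactly into $\overline{B_{r\rho}(ry)}\subset B_1$, so the admissibility of the rescaled ball is equivalent to admissibility of the original one.

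Next I would compute how the functional transforms. Using $|\nabla u_r(x)|=|\nabla u(rx)|$ and the substitution $z=rx$,
\[
\mathcal{J}_{\mathrm{G}}(u_r,B_\rho(y))=\int_{B_\rho(y)}\big(\mathrm{G}(|\nabla u(rx)|)+\chi_{\{u_r>0\}}(x)\big)\,dx = r^{-n}\int_{B_{r\rho}(ry)}\big(\mathrm{G}(|\nabla u(z)|)+\chi_{\{u>0\}}(z)\big)\,dz,
\]
where I use that $\{u_r>0\}=\{x: u(rx)>0\}$ so $\chi_{\{u_r>0\}}(x)=\chi_{\{u>0\}}(rx)$. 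Hence
\[
\mathcal{J}_{\mathrm{G}}(u_r,B_\rho(y))=r^{-n}\,\mathcal{J}_{\mathrm{G}}(u,B_{r\rho}(ry)).
\]
Now take any competitor $v\in W^{1,\mathrm{G}}(B_\rho(y))$ with $v=u_r$ on $\partial B_\rho(y)$ in the trace sense, and set $\tilde v(z)\defeq r\,v(z/r)$ for $z\in B_{r\rho}(ry)$. Then $\tilde v\in W^{1,\mathrm{G}}(B_{r\rho}(ry))$, $\nabla\tilde v(z)=\nabla v(z/r)$, and $v-u_r\in W_0^{1,\mathrm{G}}(B_\rho(y))$ rescales to $\tilde v-u\in W_0^{1,\mathrm{G}}(B_{r\rho}(ry))$, i.e. $\tilde v=u$ on $\partial B_{r\rho}(ry)$. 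The same change of variables gives $\mathcal{J}_{\mathrm{G}}(v,B_\rho(y))=r^{-n}\,\mathcal{J}_{\mathrm{G}}(\tilde v,B_{r\rho}(ry))$. Since $v\mapsto\tilde v$ is a bijection between the admissible competitor classes on the two balls, this identity together with the one for $u_r$ reduces the problem to the almost minimality of $u$ on $B_{r\rho}(ry)$.

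Finally I would apply the almost minimality of $u$ on the ball $B_{r\rho}(ry)\subset B_1$, which is legitimate by the admissibility equivalence noted above, to get $\mathcal{J}_{\mathrm{G}}(u,B_{r\rho}(ry))\leq (1+\kappa (r\rho)^\beta)\,\mathcal{J}_{\mathrm{G}}(\tilde v,B_{r\rho}(ry))$. Multiplying by $r^{-n}$ and using the two scaling identities yields
\[
\mathcal{J}_{\mathrm{G}}(u_r,B_\rho(y))\leq \big(1+\kappa r^\beta \rho^\beta\big)\,\mathcal{J}_{\mathrm{G}}(v,B_\rho(y)),
\]
which is exactly the almost minimizer inequality on $B_\rho(y)$ with the same exponent $\beta$ and new constant $r^\beta\kappa$. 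This is not a deep argument; the only points requiring a little care are the bookkeeping of the competitor correspondence $v\leftrightarrow\tilde v$ (checking it preserves the zero-trace condition and maps $W^{1,\mathrm{G}}$ to $W^{1,\mathrm{G}}$, which follows from the $\Delta_2$ property and the homogeneity of the gradient under this particular scaling) and the fact that the $\chi_{\{\cdot>0\}}$ term scales the same way as the gradient term because the set $\{u_r>0\}$ is the dilation of $\{u>0\}$. I expect no genuine obstacle here, so I would present it concisely.
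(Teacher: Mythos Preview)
Your proposal is correct and follows essentially the same approach as the paper: both arguments reduce the almost minimality of $u_r$ on a ball $B_\rho(y)\subset B_{1/r}$ to that of $u$ on the rescaled ball $B_{r\rho}(ry)\subset B_1$ via the change of variables $z=rx$ and the competitor correspondence $v\mapsto \tilde v(z)=r\,v(z/r)$, then read off the new constant $r^\beta\kappa$ from the factor $(r\rho)^\beta$. If anything, your write-up is slightly more explicit about the trace condition and the nonnegativity check than the paper's version.
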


\begin{proof}
Assume $u$ is an almost minimizer of $\mathcal{J}_G$ in $B_1$ with exponent $\beta>0$ and constant $\kappa$. Let $\bar{x}\in B_{1/r}$ and $\rho>0$ be such that $\overline{B_\rho(\bar{x})}\subset B_{1/r}$ and $v\in W^{1,G}(B_\rho(\bar{x}))$ such that $v=u_r$ on $\partial B_\rho(\bar{x})$ in the sense of traces.

Denoting $\bar{y}=r\bar{x}$ we have
\[
\overline{B_\rho(\bar{x})}\subset B_{1/r}\:\Rightarrow \:\overline{B_{r\rho}(\bar{y})}\subset B_1,\quad x\in \partial B_{r\rho}(\bar{y})\:\Rightarrow \:\frac{x}{r}\in\partial{B_{\rho}(\bar{x})}.
\]
Then, by hypothesis
\[
rv\left(\frac{x}{r}\right)=ru_r\left(\frac{x}{r}\right)=u(x)
\]
when $x\in \partial B_{r\rho}(\bar{y})$ and therefore
\[
\mathcal{J}_{\mathrm{G}}(u,B_{r\rho}(\bar{y}))\leq (1+\kappa (r\rho)^\beta)\mathcal{J}_{\mathrm{G}}\left(rv\left(\frac{\cdot}{r}\right),B_{r\rho}(\bar{y})\right).
\]
But changing variables we have that
\begin{align*}
\mathcal{J}_{\mathrm{G}}(u,B_{r\rho}(\bar{y})) & = \int_{B_{r\rho}(\bar{y})} \mathrm{G}(|\nabla u(y)|)+\chi_{\{u>0\}}\,dy \\
									& = r^n\int_{B_{\rho}(\bar{x})} \mathrm{G}(|\nabla u(rx)|)+\chi_{\{u>0\}}(rx)\,dx \\
									& = r^n\int_{B_{\rho}(\bar{x})} \mathrm{G}(|\nabla u_r|)+\chi_{\{u_r>0\}}\,dx \\
									& = r^n\mathcal{J}_{\mathrm{G}}\left(u_r,B_{\rho}(\bar{x})\right)
\end{align*}
and a similar computation can be done to show that
\[
\mathcal{J}_{\mathrm{G}}\left(rv\left(\frac{\cdot}{r}\right),B_{r\rho}(\bar{y})\right)=r^n\mathcal{J}_{\mathrm{G}}\left(v,B_{\rho}(\bar{x})\right)
\]
and get the desired inequality.
\end{proof}

\section{Dichotomy lemmas}\label{sec.dichot}

In this section we prove the main technical results needed in the proof of Theorem \ref{thm.main}. We being with the following dichotomy lemma:

\begin{lemma}\label{lem.dichotomy}
There exists $\varepsilon_0\in(0,1)$ such that for every $0<\varepsilon\leq\varepsilon_0$ there exist $\eta\in(0,1)$, $\mathrm{M}>1$ and $\sigma_0\in(0,1)$ depending on $\varepsilon,n,\delta$ and $g_0$ such that: if $\sigma\leq \sigma_0,\:a > \mathrm{M},\:u\in W^{1,\mathrm{G}}(B_1)$ and the following inequality holds for all $v\in W^{1,\mathrm{G}}(B_1)$ such that $v=u$ on $\partial B_1$:
$$
\mathcal{J}_{\mathrm{G}}(u,B_1)\leq (1+\sigma)\mathcal{J}_{\mathrm{G}}(v,B_1)
$$
then, if we denote
\begin{equation}\label{eq.avgu}
\mathrm{G}(a) \defeq \intav{B_1}\mathrm{G}(|\nabla u|)\,dx
\end{equation}
the following dichotomy holds: either
\begin{equation}
\intav{B_\eta}\mathrm{G}(|\nabla u|)\,dx\leq \mathrm{G}\left(\frac{a}{2}\right)
\end{equation}
or
\begin{equation}\label{cota-eg}
\intav{B_\eta}\mathrm{G}(|\nabla u-q|)\,dx\leq \mathrm{G}(\varepsilon a)
\end{equation}
where $q\in\R^n$ verifies
\begin{equation}\label{eq.q}
\mathrm{G}^{-1}\left(\frac{1}{\mathrm{C}}\mathrm{G}\left(\frac{a}{4}\right)\right)<|q|<\mathrm{C}_0 a.
\end{equation}
\end{lemma}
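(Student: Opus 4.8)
The dichotomy is a compactness/contradiction argument built around the $g$-harmonic replacement. The plan is to argue by contradiction: suppose that for some fixed $\varepsilon$ (to be constrained below by $\varepsilon_0$) no choice of the parameters $\eta,\mathrm{M},\sigma_0$ works, so that there are sequences $\sigma_k\to0$, $a_k\to\infty$, and almost minimizers $u_k\in W^{1,\mathrm{G}}(B_1)$ with $\mathcal{J}_{\mathrm{G}}(u_k,B_1)\leq(1+\sigma_k)\mathcal{J}_{\mathrm{G}}(v,B_1)$ for all competitors $v$, satisfying $\intav{B_1}\mathrm{G}(|\nabla u_k|)\,dx=\mathrm{G}(a_k)$ and for which \emph{both} alternatives fail for every small $\eta$. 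The normalization $\intav{B_1}\mathrm{G}(|\nabla u_k|)\,dx=\mathrm{G}(a_k)$ together with \eqref{eq.normandmoular} and \eqref{minmax3} suggests rescaling: set $w_k \defeq u_k/a_k$, so that $\intav{B_1}\mathrm{G}(a_k|\nabla w_k|)\,dx=\mathrm{G}(a_k)$, which via \eqref{minmax2} forces $\intav{B_1}\mathrm{G}(|\nabla w_k|)\,dx$ to stay bounded above and below away from $0$ (using $a_k>\mathrm{M}>1$). Hence $\{w_k\}$ is bounded in $W^{1,\mathrm{G}}(B_1)$, and by the compact embedding $W^{1,\mathrm{G}}(B_1)\hookrightarrow\hookrightarrow L^{\mathrm{G}}(B_1)$ (which uses \eqref{eq.exponentes}) we may extract $w_k\to w_\infty$ strongly in $L^{\mathrm{G}}$ and weakly in $W^{1,\mathrm{G}}$.

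\textbf{Key steps.} (i) \emph{From almost minimality to a minimizing property in the limit.} Let $h_k$ be the $g$-harmonic replacement of $u_k$ in $B_1$ and compare: since $\chi_{\{u_k>0\}}\leq1$ and $\chi_{\{h_k>0\}}\geq0$, the almost minimality inequality gives
\[
\int_{B_1}\mathrm{G}(|\nabla u_k|)\,dx\leq(1+\sigma_k)\int_{B_1}\mathrm{G}(|\nabla h_k|)\,dx+(1+\sigma_k)|B_1|,
\]
so that $\int_{B_1}\mathrm{G}(|\nabla u_k|)-\mathrm{G}(|\nabla h_k|)\,dx\leq\sigma_k\int_{B_1}\mathrm{G}(|\nabla h_k|)\,dx+(1+\sigma_k)|B_1|$. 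Dividing the rescaled version by $\mathrm{G}(a_k)\to\infty$ and using Lemma \ref{lem.harmrep} on the rescaled functions, one obtains that the rescaled replacements $\tilde h_k \defeq h_k/a_k$ satisfy $\int_{B_1}\mathrm{G}(|\nabla w_k-\nabla\tilde h_k|)\,dx\to0$ in the appropriate normalized sense — more precisely, the energy gap between $w_k$ and its $g$-harmonic replacement (after rescaling the modular) tends to zero. This is where the lack of homogeneity bites: one cannot simply factor out $a_k$ from $\mathrm{G}(a_k|\cdot|)$, so one must carefully use \eqref{minmax2}, \eqref{eq.delta2} and the fact that $a_k\to\infty$ to control $\mathrm{G}(a_k t)/\mathrm{G}(a_k)$ from above and below by powers of $t$ uniformly in $k$.

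(ii) \emph{Identifying the limit and closing the dichotomy.} Since $\tilde h_k$ is (a rescaling of) a $g$-harmonic function and, as shown, $w_k-\tilde h_k\to0$, the limit $w_\infty$ is $g_\infty$-harmonic for the appropriate limiting homogeneous operator; by \eqref{eq.osc} applied to $\tilde h_k$, $\nabla\tilde h_k$ enjoys uniform interior oscillation decay. Now distinguish two cases according to the size of $\intav{B_\eta}\mathrm{G}(|\nabla h_k|)\,dx$ relative to $\mathrm{G}(a_k/2)$. If it is $\leq\mathrm{G}(a_k/2)$ for infinitely many $k$, then using $\int_{B_\eta}\mathrm{G}(|\nabla u_k|)-\mathrm{G}(|\nabla h_k|)$ small (again via Lemma \ref{lem.harmrep} and almost minimality, now localized) one gets $\intav{B_\eta}\mathrm{G}(|\nabla u_k|)\,dx\leq\mathrm{G}(a_k/2)$ for large $k$, contradicting the assumed failure of the first alternative. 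Otherwise $\intav{B_\eta}\mathrm{G}(|\nabla h_k|)\,dx>\mathrm{G}(a_k/2)$ for all large $k$; then $q_k \defeq \intav{B_\eta}\nabla h_k\,dx$ (or the value $\nabla h_k(0)$) has $|q_k|$ comparable to $a_k$, giving both bounds in \eqref{eq.q} after choosing the constants $\mathrm{C},\mathrm{C}_0$ from the interior estimates of Proposition \ref{prop.reg}; and by the oscillation decay \eqref{eq.osc}, choosing $\eta$ small makes $\intav{B_\eta}\mathrm{G}(|\nabla h_k-q_k|)\,dx\leq\mathrm{G}(\varepsilon a_k/2)$, whence via Lemma \ref{lem.harmrep} and \eqref{eq.sums} also $\intav{B_\eta}\mathrm{G}(|\nabla u_k-q_k|)\,dx\leq\mathrm{G}(\varepsilon a_k)$, contradicting the failure of the second alternative.

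\textbf{Main obstacle.} The delicate point is step (i): quantifying ``the energy gap is small'' in a way that survives division by $\mathrm{G}(a_k)$. Because $\mathrm{G}$ is not a power, the natural rescaling $u_k\mapsto u_k/a_k$ does not make the modular scale cleanly, and one must juggle the modular $\Phi_{\mathrm{G},B_1}$, the Luxemburg norm, and the estimates \eqref{eq.normandmoular}, \eqref{minmax2}, \eqref{minmax3} to keep all quantities comparable to their power-law analogues uniformly in $k$; the bound $a_k>\mathrm{M}$ with $\mathrm{M}$ large (chosen at the end) is precisely what is needed to stay in the regime $t>1$ of \eqref{eq.delta2less1} where $\mathrm{G}(a_k t)\approx\mathrm{G}(a_k)t^{\text{power}}$ with a two-sided control. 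A secondary technical point is ensuring the compact embedding and the passage to the limit for the $g$-harmonic replacements are uniform, which is where hypothesis \eqref{eq.exponentes} and the convexity of $g$ (needed for Lemma \ref{lem.harmrep}) enter.
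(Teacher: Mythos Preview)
Your compactness framework has a genuine gap at its foundation. You assert that $\intav_{B_1}\mathrm{G}(a_k|\nabla w_k|)\,dx=\mathrm{G}(a_k)$ together with \eqref{minmax2} forces $\intav_{B_1}\mathrm{G}(|\nabla w_k|)\,dx$ to stay bounded above and below. But \eqref{minmax2} with $\theta=a_k>1$ only yields
\[
a_k^{\delta+1}\intav_{B_1}\mathrm{G}(|\nabla w_k|)\,dx \leq \mathrm{G}(a_k)\leq a_k^{g_0+1}\intav_{B_1}\mathrm{G}(|\nabla w_k|)\,dx,
\]
and combining with $a_k^{\delta+1}\leq\mathrm{G}(a_k)\leq a_k^{g_0+1}$ gives only $a_k^{\delta-g_0}\leq \intav_{B_1}\mathrm{G}(|\nabla w_k|)\,dx \leq a_k^{g_0-\delta}$. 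Whenever $\delta<g_0$ (the generic Orlicz situation) the upper bound blows up and the lower bound goes to zero as $a_k\to\infty$, so no $W^{1,\mathrm{G}}$-compactness for $\{w_k\}$ is available. The correct renormalization would be to change the Young function along the sequence, working with $\mathrm{G}_k(t)=\mathrm{G}(a_kt)/\mathrm{G}(a_k)$; these satisfy \eqref{eq.lieberman} uniformly, but then the compact embedding and the ``$g_\infty$-harmonic limit'' must be carried out in a varying-modular framework which you do not set up (and which your references to \eqref{eq.exponentes} do not cover). Your negation of the statement is also muddled: $\eta$ should be fixed in the contradiction, not range over ``every small $\eta$''.

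In fact the compactness is unnecessary: your step (ii), stripped of the limiting wrapper, is essentially the paper's \emph{direct} argument. Let $v$ be the $g$-harmonic replacement of $u$ in $B_1$ and set $q\defeq\nabla v(0)$. Lemma~\ref{lem.harmrep} plus almost minimality give the quantitative bound $\intav_{B_1}\mathrm{G}(|\nabla u-\nabla v|)\,dx\leq \mathrm{C}(\sigma\,\mathrm{G}(a)+1)$, while Proposition~\ref{prop.reg} gives $\mathrm{G}(|\nabla v|)\leq \mathrm{C}\mathrm{G}(a)$ in $B_{1/2}$ (hence $|q|\leq\mathrm{C}_0 a$) and $\intav_{B_\eta}\mathrm{G}(|\nabla v-q|)\,dx\leq \mathrm{C}\eta^{\alpha(\delta+1)}\mathrm{G}(a)$. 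Combining via \eqref{eq.sums} yields
\[
\intav_{B_\eta}\mathrm{G}(|\nabla u-q|)\,dx \leq \mathrm{C}\eta^{-n}\sigma\,\mathrm{G}(a)+\mathrm{C}\eta^{-n}+\mathrm{C}\eta^{\alpha(\delta+1)}\mathrm{G}(a),
\]
and one simply \emph{chooses} $\eta$ small, then $\sigma_0=\eta^{n+1}$, then $\mathrm{M}$ large enough that the right side is $\leq\mathrm{G}(\varepsilon a)$ for all $a>\mathrm{M}$ (the constant term $\mathrm{C}\eta^{-n}$ is absorbed once $a$ is large). The dichotomy then comes from splitting on the size of $|q|$: if $|q|\leq\mathrm{G}^{-1}\bigl(\tfrac{1}{\mathrm{C}}\mathrm{G}(a/4)\bigr)$ one gets the first alternative from $\intav_{B_\eta}\mathrm{G}(|\nabla u|)\leq\mathrm{G}(\varepsilon a)+\mathrm{C}\mathrm{G}(|q|)$ with $\varepsilon_0\leq1/4$; otherwise the second alternative holds with \eqref{eq.q}. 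No sequences, no limits, all constants explicit.
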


\begin{proof}

Let $v$ be the $g-$harmonic replacement of $u$ in $B_1$. We start by computing, with the aid of Lemma \ref{lem.harmrep} and the definition of almost minimizer,
\begin{align*}
\int_{B_1}\mathrm{G}(|\nabla u-\nabla v|)\,dx&\leq \mathrm{C}\int_{B_1}(\mathrm{G}(|\nabla u|)-\mathrm{G}(|\nabla v|))\,dx\\
&\leq \mathrm{C}\left(\int_{B_1}\mathrm{G}(|\nabla u|)+\chi_{\{u>0\}}\,dx -\mathrm{G}(|\nabla v|)\,dx\right)\\
&=\mathrm{C}\left(\mathcal{J}_{\mathrm{G}}(u,B_1)-\int_{B_1}\mathrm{G}(|\nabla v|)\,dx\right)\\
&\leq \mathrm{C}((1+\sigma)\mathcal{J}_{\mathrm{G}}(v,B_1)-\int_{B_1}\mathrm{G}(|\nabla v|)\,dx)\\
&=\mathrm{C}\left(\sigma\left(\int_{B_1}\mathrm{G}(|\nabla v|)\,dx+|\{v>0\}\cap B_1|\right)+|\{v>0\}\cap B_1|\right)\\
&\leq \mathrm{C}\left(\sigma\int_{B_1}\mathrm{G}(|\nabla v|)\,dx+1\right) \\
&\leq \mathrm{C}\left(\sigma\int_{B_1}\mathrm{G}(|\nabla u|)\,dx+1\right),
\end{align*}
the last inequality owing to the fact that $v$ minimizes the energy. Taking average and recalling \eqref{eq.avgu} we have
\begin{align*}\label{cota1}
\intav{B_1} \mathrm{G}(|\nabla u-\nabla v|)\,dx\leq \mathrm{C}(\sigma \mathrm{G}(a)+1) .
\end{align*}

On the other hand, if we fix $x\in B_{\frac{1}{2}}$, by \eqref{eq.supavg} in Proposition \ref{prop.reg} we have
$$
\mathrm{G}(|\nabla v(x)|)\leq \sup _{B_{\frac{1}{4}}(x)}\mathrm{G}(|\nabla v|)\leq \mathrm{C}\intav{B_{\frac{1}{2}}(x)} \mathrm{G}(|\nabla v|)\, dy\leq \mathrm{C}\intav{B_{1}} \mathrm{G}(|\nabla u|)\, dy\leq \mathrm{C}\mathrm{G}(a)
$$
so that
\begin{equation}\label{cota-gradiente}
\mathrm{G}\left(|\nabla v|\right)\leq C\mathrm{G}(a)\text{ in }B_{1/2}.
\end{equation}
Further, if we denote $q=\nabla v(0)$, and recall \eqref{eq.osc} in Proposition \ref{prop.reg} we have
\begin{align*}
\intav{B_\eta} \mathrm{G}(|\nabla v-q|)\,dx&\leq\intav{B_\eta} \mathrm{G}\left(\mathrm{C}\left(\frac{\eta}{\frac{1}{2}}\right)^{\alpha} \sup_{B_\frac{1}{2}}|\nabla v|\right)\,dx\\
&\leq\left(\mathrm{C}\left(\frac{\eta}{\frac{1}{2}}\right)^{\alpha}\right)^{\delta+1}\intav{B_\eta} \mathrm{G}\left(\sup_{B_\frac{1}{2}}|\nabla v|\right)\,dx\\
&=\mathrm{C}\eta^{\alpha (\delta+1)}\mathrm{G}\left(\sup_{B_\frac{1}{2}}|\nabla v|\right)
\end{align*}
and by \eqref{cota-gradiente}
$$
\intav{B_\eta}\mathrm{G}(|\nabla v-q|)\,dx\leq \mathrm{C}\eta^{\alpha (\delta+1)}\mathrm{G}(a)\qquad\mbox{ for all }\eta\leq\frac{1}{2}.
$$

Now, using the $\Delta_2$ condition (see \eqref{Delta_2cond}),
\begin{align*}
\intav{B_\eta} \mathrm{G}(|\nabla u-q|)\,dx&\leq \mathrm{C}\left(\intav{B_\eta} \mathrm{G}(|\nabla u-\nabla v|)\,dx+\intav{B_\eta} \mathrm{G}(|\nabla v-q|)\,dx\right)\\
&=\mathfrak{I}+\mathfrak{II}
\end{align*}
These two terms are bounded using the previous estimates:
$$
\mathfrak{I}\leq\frac{1}{|B_\eta|}\int_{B_1} \mathrm{G}(|\nabla u-\nabla v|)\,dx\leq\frac{\mathrm{C}}{\eta^n}(\sigma \mathrm{G}(a)+1).
$$
and
$$
\mathfrak{II}\leq \mathrm{C}\eta^{\alpha (\delta+1)} \mathrm{G}(a).
$$
So we get
\begin{equation}\label{cota1.5}
\intav{B_\eta} \mathrm{G}(|\nabla u-q|)\,dx\leq \mathrm{C}\eta^{-n}\sigma \mathrm{G}(a)+ \mathrm{C}\eta^{-n}+\mathrm{C} \mathrm{G}(a)\eta^{\alpha (\delta+1)}
\end{equation}
and
\begin{equation}\label{progradu}
\intav{B_\eta} \mathrm{G}(|\nabla u|)\,dx\leq \mathrm{C}\eta^{-n}\sigma \mathrm{G}(a)+ \mathrm{C}\eta^{-n}+\mathrm{C}\mathrm{G}(a)\eta^{\alpha (\delta+1)}+\mathrm{C}\mathrm{G}(|q|).
\end{equation}
Next, we choose $\sigma=\eta^{n+1}$
$$
\mathrm{C}\eta^{-n}\sigma \mathrm{G}(a)+ \mathrm{C}\eta^{-n}+\mathrm{C}\mathrm{G}(a)\eta^{\alpha(\delta+1)}=\mathrm{C}\eta \mathrm{G}(a)+ \mathrm{C}\eta^{-n}+\mathrm{C}\mathrm{G}(a)\eta^{\alpha (\delta+1)}
$$
and $a$ such that
\begin{equation}\label{cota2}
\mathrm{C}\eta \mathrm{G}(a)+ \mathrm{C}\eta^{-n}+\mathrm{C}\mathrm{G}(a)\eta^{\alpha (\delta+1)}\leq \mathrm{G}(\varepsilon a).
\end{equation}
The inequality \eqref{cota2} holds if
$$
a\geq \mathrm{G}^{-1}\left(\frac{\mathrm{C}\eta^{-n}}{\varepsilon^{g_0+1}-\mathrm{C}\eta-\mathrm{C}\eta^{\alpha (\delta+1)}}\right).
$$
(Note that the denominator is positive if $\eta$ is small enough.) Then,
$$
\intav{B_\eta}\mathrm{G}(|\nabla u|)\,dx\leq \mathrm{G}(\varepsilon a)+\mathrm{C}\mathrm{G}(|q|).
$$

Next we split the two possible cases; if
$$
|q|\leq \mathrm{G}^{-1}\left(\frac{1}{\mathrm{C}}\mathrm{G}\left(\frac{a}{4}\right)\right)
$$
making $\varepsilon<1/4$ we get
$$
\intav{B_\eta} \mathrm{G}(|\nabla u|)\,dx\leq 2\mathrm{G}\left(\frac{a}{4}\right)\leq \frac{2}{2^{\delta+1}} \mathrm{G}\left(\frac{a}{2}\right)\leq \mathrm{G}\left(\frac{a}{2}\right)
$$
which is the first alternative in the Lemma.

On the other hand, if
$$
|q|> \mathrm{G}^{-1}\left(\frac{1}{\mathrm{C}} \mathrm{G}\left(\frac{a}{4}\right)\right)
$$
then by \eqref{cota1.5}
$$
\intav{B_\eta} \mathrm{G}(|\nabla u-q|)\,dx\leq \mathrm{G}(\varepsilon a).
$$
Noting that \eqref{cota-gradiente} implies the bound on $|q|$ we finish the proof.
\end{proof}

\begin{remark}
Using the $\Delta_2$ condition (see in particular \eqref{minmax2}) we can replace \eqref{cota-eg} by
\begin{equation}\label{cota-eg-nueva}
\intav{B_\eta} \mathrm{G}(|\nabla u-q|)\,dx\leq \varepsilon^{\delta+1}\mathrm{G}(a).
\end{equation}
\end{remark}

The following lemma can be seen as an improvement of the previous one in the case where $\varepsilon$ and $\sigma$ are (universally) small enough.

\begin{lemma}\label{lemalargo}
Let $0<a_0<a_1$ and $\mathrm{G}$ a Young function satisfying \eqref{eq.lieberman} such that $\mathrm{G}^{\prime}=g$ is a convex function. Let $u\in W^{1,G}(B_1)$ with $u\geq 0$ and:
$$
\mathcal{J}_{\mathrm{G}}(u,B_1)\leq (1+\sigma)\mathcal{J}_{\mathrm{G}}(v,B_1)
$$
for all $v\in W^{1,\mathrm{G}}(B_1)$ such that $v=u$ on $\partial B_1$. Define $\mathrm{G}(a)$ be as in \eqref{eq.avgu} and assume 	
\[
a_0\leq a\leq a_1.
\]
Assume also that
\[
\intav{B_1} \mathrm{G}(|\nabla u-q|)\,dx\leq \mathrm{G}\left(a\right)\varepsilon^{\delta+1}
\]
for $\varepsilon$ small enough with $q\in\R^n$ satisfying
\begin{equation}\label{eq.q2}
\mathrm{G}^{-1}\left(\frac{1}{\mathrm{C}} \mathrm{G}\left(\frac{a}{8}\right)\right)\leq |q|\leq 2\mathrm{C}_0a
\end{equation}
with $C_0$ given in \eqref{eq.q}.

Then there exist  universal constants $\varepsilon_0,\alpha,r\in(0,1)$ such that if $\varepsilon<\varepsilon_0$ and $\sigma\leq c_0\mathrm{G}(\varepsilon)$ then
$$
\intav{B_r} \mathrm{G}(|\nabla u-\tilde{q}|)\,dx\leq r^{\alpha(\delta+1)} \mathrm{G}(a)\varepsilon^{\delta+1}
$$
where $\tilde{q}\in\R^n$ verifies
$$
|q-\tilde{q}|\leq \tilde{C}\varepsilon^{\tau} a\qquad\mbox{ where }\qquad\tau=\frac{\delta+1}{g_0+1}
$$
\end{lemma}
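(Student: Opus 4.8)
The plan is to realize the improvement as a compactness/linearization argument in the spirit of De Silva--Savin and Dipierro \emph{et al}, but carried out at the level of modulars rather than norms. First I would set $w\defeq u-q\cdot x$ and record that the smallness hypothesis says $\intav{B_1}\mathrm{G}(|\nabla w|)\,dx\leq \mathrm{G}(a)\varepsilon^{\delta+1}$, so that (by \eqref{eq.normandmoular} and the $\Delta_2$ bounds) $\nabla w$ is small in the Luxemburg norm at a rate controlled by $\varepsilon$. Since $a\in[a_0,a_1]$ and \eqref{eq.q2} forces $|q|$ to be comparable to $a$, the coefficient $g(|q|)/|q|$ is bounded above and below by universal constants depending on $a_0,a_1$; this is what makes the ellipticity of the frozen operator uniform.

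The core step is to compare $u$ with the $g$-harmonic replacement $v$ of $u$ in $B_1$. By Lemma~\ref{lem.harmrep} and the almost-minimality inequality (exactly as in the computation opening the proof of Lemma~\ref{lem.dichotomy}), one gets
\[
\intav{B_1}\mathrm{G}(|\nabla u-\nabla v|)\,dx\leq \mathrm{C}\bigl(\sigma\,\mathrm{G}(a)+1\bigr)\leq \mathrm{C}\,\mathrm{G}(a)\,\mathrm{G}(\varepsilon),
\]
using $\sigma\leq c_0\mathrm{G}(\varepsilon)$, $a\geq a_0$ and adjusting $c_0$. Next I would observe that $v-q\cdot x$ satisfies a divergence-form equation $\diver(\mathfrak{A}(x)\nabla(v-q\cdot x))=0$ with $\mathfrak{A}(x)=\int_0^1 D\overrightarrow{\mathbf F}(q+t\nabla(v-q\cdot x))\,dt$: here one needs $|\nabla(v-q\cdot x)|<\tfrac{|q|}{2}$ pointwise, which I would extract from \eqref{cota-gradiente}-type interior gradient bounds for $v$ (Proposition~\ref{prop.reg}) together with the fact that $\nabla v(0)$ is close to $q$ and oscillates little; once that holds, Lemma~\ref{lem.ellipticity} gives the uniform ellipticity \eqref{eq.Aellip} with constant $g(|q|)/|q|$, and Remark~\ref{remark.reg} yields the interior $C^{1,\alpha}$ estimate $\osc_{B_\rho}|\nabla v|\leq \mathrm{C}\rho^\alpha\sup_{B_{1/2}}|\nabla v|$. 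Setting $\tilde q\defeq\nabla v(0)$, this gives $\intav{B_r}\mathrm{G}(|\nabla v-\tilde q|)\,dx\leq \mathrm{C}r^{\alpha(\delta+1)}\mathrm{G}(a)\varepsilon^{\delta+1}$ after choosing $r$ small, using the Caccioppoli-type bound $\sup_{B_{1/2}}|\nabla(v-q\cdot x)|^{\delta+1}\lesssim \intav{B_1}\mathrm{G}(|\nabla v-q|)\,dx/\mathrm{G}(a)\cdot(\dots)$ translated through \eqref{minmax2}. Splitting $\nabla u-\tilde q=(\nabla u-\nabla v)+(\nabla v-\tilde q)$ via \eqref{eq.sums} and absorbing the first piece (which is of size $\mathrm{G}(a)\mathrm{G}(\varepsilon)$, hence negligible compared to $r^{\alpha(\delta+1)}\mathrm{G}(a)\varepsilon^{\delta+1}$ once $\varepsilon$ is small, since $\mathrm{G}(\varepsilon)\leq\varepsilon^{\delta+1}$ for $\varepsilon<1$ and we may in addition shrink $c_0$) gives the claimed decay $\intav{B_r}\mathrm{G}(|\nabla u-\tilde q|)\,dx\leq r^{\alpha(\delta+1)}\mathrm{G}(a)\varepsilon^{\delta+1}$.

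For the bound $|q-\tilde q|\leq \tilde C\varepsilon^{\tau}a$ with $\tau=\tfrac{\delta+1}{g_0+1}$: since $\tilde q=\nabla v(0)$, one has $|q-\tilde q|\leq \intav{B_{1/2}}|\nabla v-q|\,dx+\osc_{B_{1/2}}|\nabla v|\lesssim \intav{B_{1/2}}|\nabla v-q|\,dx$, and by Jensen together with \eqref{minmax2}-\eqref{minmax3} one converts $\intav{B_{1/2}}\mathrm{G}(|\nabla v-q|)\,dx\leq \mathrm{C}\mathrm{G}(a)\varepsilon^{\delta+1}$ into $\intav{B_{1/2}}|\nabla v-q|\,dx\leq \mathrm{C}\mathrm{G}^{-1}(\mathrm{G}(a)\varepsilon^{\delta+1})\leq \mathrm{C}\,a\,\varepsilon^{(\delta+1)/(g_0+1)}$, where the last step uses \eqref{minmax3} with $\theta=\varepsilon^{\delta+1}$ and $|q|\sim a$ (so $\mathrm{G}^{-1}(\mathrm{G}(a))\sim a$); this is precisely the exponent $\tau$.

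\textbf{Main obstacle.} The delicate point I expect to fight is justifying the pointwise smallness $|\nabla v-q|<\tfrac{|q|}{2}$ on $B_{1/2}$ needed to invoke Lemma~\ref{lem.ellipticity}: the hypothesis only controls the \emph{modular average} of $|\nabla u-q|$, so one must pass from an averaged $\mathrm{G}$-integral bound to an $L^\infty$ bound on $\nabla v$, which requires chaining the $g$-harmonic replacement estimate, the interior sup-bound \eqref{eq.supavg}, and the non-homogeneous conversions \eqref{minmax2}--\eqref{minmax3} while keeping every constant universal and every power of $\varepsilon$ on the correct side of the inequality; the restriction $a_0\leq a\leq a_1$ is exactly what keeps $g(|q|)/|q|$ and $\mathrm{G}(a)$ sandwiched between universal constants so that these conversions do not degrade. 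A secondary technical nuisance is that $\varepsilon$, $\sigma$, $r$ and the final constant $\tilde C$ must be chosen in the right order (fix $\alpha$ from Proposition~\ref{prop.reg}; then $r$ small to beat the constant in the $C^{1,\alpha}$ estimate; then $\varepsilon_0$ and $c_0$ small so the replacement error is subordinate), and one has to verify this hierarchy is non-circular.
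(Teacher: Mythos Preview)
There is a genuine gap in your replacement estimate. You write
\[
\intav{B_1}\mathrm{G}(|\nabla u-\nabla v|)\,dx\leq \mathrm{C}\bigl(\sigma\,\mathrm{G}(a)+1\bigr)\leq \mathrm{C}\,\mathrm{G}(a)\,\mathrm{G}(\varepsilon),
\]
but the second inequality is false: the term $+1$ comes from bounding $|\{v>0\}\cap B_1|\leq |B_1|$ in the characteristic-function part of the functional and is of order one regardless of how small $\sigma$ or $c_0$ is. In Lemma~\ref{lem.dichotomy} this is harmless because $a>\mathrm{M}$ is large and the $+1$ is dominated by $\mathrm{G}(a)$; here, however, $a\leq a_1$, so $\mathrm{C}\,\mathrm{G}(a)\,\mathrm{G}(\varepsilon)\leq \mathrm{C}\,\mathrm{G}(a_1)\,\mathrm{G}(\varepsilon)\to 0$ while the left side stays bounded below. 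No adjustment of $c_0$ can fix this. And even if the bound you claim did hold, note that after averaging over $B_r$ the error becomes $\mathrm{C}r^{-n}\mathrm{G}(a)\mathrm{G}(\varepsilon)$, which for $\mathrm{G}(\varepsilon)$ close to $\varepsilon^{\delta+1}$ is of the \emph{same} order as the target $r^{\alpha(\delta+1)}\mathrm{G}(a)\varepsilon^{\delta+1}$; you need a genuinely smaller power of $\varepsilon$ to close the iteration.

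The paper's proof is built precisely around this point, and it is the longest part of the argument. One takes the replacement in $B_{1/2}$ (extended by $u$ on $B_1\setminus B_{1/2}$), keeps the characteristic-function contribution explicit, and obtains
\[
\int_{B_{1/2}}\mathrm{G}(|\nabla u-\nabla v|)\,dx\leq \mathrm{C}\sigma(\mathrm{G}(a)+1)+|\{u=0\}\cap B_{1/2}|.
\]
The new ingredient, absent from your plan, is the measure bound $|\{u=0\}\cap B_{1/2}|\leq \mathrm{C}\varepsilon^{\delta+1}\varepsilon^\gamma$ for some $\gamma>0$. This is proved by setting $\mathfrak{l}(x)=q\cdot x+\intav{B_1}u$, showing via Poincar\'e and \eqref{eq.normandmoular} that $\|u-\mathfrak{l}\|_{L^{\mathrm{G}}}\lesssim \varepsilon^\tau\mathrm{G}(a)^{1/(g_0+1)}$, then proving by contradiction (using $u\geq 0$, hence $\mathfrak{l}^-\leq|u-\mathfrak{l}|$, together with \eqref{eq.q2}) that $\mathfrak{l}\geq \tilde{\mathrm{c}}\,\mathrm{G}(a)^{1/(g_0+1)}$ on $B_{1/2}$, and finally invoking the Orlicz--Sobolev embedding into $L^{\mathrm{G}^\ast}$ (or Morrey, in the complementary regime) to convert smallness of $u-\mathfrak{l}$ into smallness of $|\{u=0\}\cap B_{1/2}|$. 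The structural hypothesis $(\delta+1)^\ast>g_0+1$ is exactly what makes $\gamma=\tau(\delta+1)^\ast-(\delta+1)>0$.

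The issue you flag as the ``main obstacle''---the pointwise bound $|\nabla v-q|<|q|/2$ needed for Lemma~\ref{lem.ellipticity}---is real but secondary; the paper dispatches it with a short contradiction argument using the $C^{1,\alpha}$ regularity of $v$, once the integral estimate $\int_{B_{1/2}}\mathrm{G}(|\nabla v-q|)\,dx\leq \mathrm{C}\varepsilon^{\delta+1}\mathrm{G}(a)$ is available. The missing idea in your plan is the measure estimate on $\{u=0\}$.
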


\begin{proof}
Let $\hat{v}$ be the $g-$harmonic replacement of $u$ in $B_{1/2}$ and define
\[
\begin{cases}
v = \hat{v} &\quad \text{ in } B_{1/2},\\
v = u &\quad \text{ in } B_1\setminus B_{1/2}.
\end{cases}
\]
Since $v\in W^{1,G}(B_1)$ and $v=u$ on $\partial B_1$ we have
\begin{align*}
\mathcal{J}_{\mathrm{G}}(u,B_{1/2}) +\mathcal{J}_{\mathrm{G}}(u,B_1\setminus B_{1/2}) &= \mathcal{J}_{\mathrm{G}}(u,B_1)\\
					&\leq \mathcal{J}_{\mathrm{G}}(v,B_1)+\sigma\mathcal{J}_{\mathrm{G}}(v,B_1) \\
					&= \mathcal{J}_{\mathrm{G}}(v,B_{1/2}) +\mathcal{J}_{\mathrm{G}}(v,B_1\setminus B_{1/2}) +\sigma\mathcal{J}_{\mathrm{G}}(v,B_1).
\end{align*}
so
\[
\mathcal{J}_{\mathrm{G}}(u,B_{1/2})\leq \mathcal{J}_{\mathrm{G}}(v,B_{1/2}) +\sigma\mathcal{J}_{\mathrm{G}}(v,B_1).
\]
Now using the definition of $\mathcal{J}_G$ on the previous inequality we have
\[
\int_{B_{1/2}} \mathrm{G}(|\nabla u|)\,dx+|\{u>0\}\cap B_{1/2}|\leq \int_{B_{1/2}} \mathrm{G}(|\nabla v|)\,dx+|B_{1/2}|+\sigma\mathcal{J}_{\mathrm{G}}(v,B_1)
\]
and hence
\[
\int_{B_{1/2}} \mathrm{G}(|\nabla u|)-\mathrm{G}(|\nabla v|)\,dx\leq |B_{1/2}|-|\{u>0\}\cap B_{1/2}|+\sigma\mathcal{J}_{\mathrm{G}}(v,B_1).
\]
Using Lemma \ref{lem.harmrep}, the fact that
\[
|B_{1/2}|-|\{u>0\}\cap B_{1/2}|=|\{u=0\}\cap B_{1/2}|
\]
and that
\[
\mathcal{J}_{\mathrm{G}}(v,B_1)\leq \int_{B_1} \mathrm{G}(|\nabla u|)\,dx+|B_1|\leq |B_1|\left(\mathrm{G}(a)+1\right)
\]
we arrive at the following inequality:
\begin{equation}\label{eq.desig}
\int_{B_{1/2}} \mathrm{G}(|\nabla u-\nabla v|)\,dx\leq \mathrm{C}\sigma(\mathrm{G}(a)+1)+|\{u=0\}\cap B_{1/2}|.
\end{equation}

Next we will prove that
\begin{equation}\label{eq.boundmeasure}
|\{u=0\}\cap B_{1/2}|\leq \mathrm{C}\varepsilon^{\delta+1}\varepsilon^\gamma
\end{equation}
for some $\gamma\in(0,1)$. We start out by defining
\[
\mathfrak{l}(x) \defeq q\cdot x+b
\]
with
\[
b \defeq \intav{B_1} u(x)\:dx.
\]
Notice that
\[
\intav{B_1} (u(x)-\mathfrak{l}(x))\:dx=-\intav{B_1} q\cdot x\:dx=0.
\]
Denote
\[
\intav{B_1} (u(x)-\mathfrak{l}(x))\:dx=(u-\mathfrak{l})_{B_1}
\]
we have by Poincar\'e inequality
\begin{equation}\label{eq.poinc}
\|u-\mathfrak{l}-(u-\mathfrak{l})_{B_1}\|_{L^{\mathrm{G}}(B_1)}= \|u-\mathfrak{l}\|_{L^{\mathrm{G}}(B_1)} \leq \mathrm{C}\|\nabla (u-\mathfrak{l})\|_{L^{\mathrm{G}}(B_1)}.
\end{equation}

Now, we want to use the fact that
\[
\intav{B_1} \mathrm{G}(|\nabla u-q|)\,dx\leq \mathrm{C} \mathrm{G}\left(a\right)\varepsilon^{\delta+1}
\]
so we need to compare modulars and norms. Applying \eqref{eq.normandmoular} to $w=\nabla (u-l)$ and using the previous inequality we have
\begin{equation}\label{eq.nablaul}
\|\nabla (u-\mathfrak{l})\|_{L^{\mathrm{G}}(B_1)} \leq \mathrm{C}\varepsilon^\tau \mathrm{G}(a)^\frac{1}{g_0+1}.
\end{equation}
and by \eqref{eq.poinc}
\[
\|u-\mathfrak{l}\|_{L^{\mathrm{G}}(B_1)}\leq  \mathrm{C}\varepsilon^\tau \mathrm{G}(a)^\frac{1}{g_0+1}.
\]
Next, we point out that if we denote $\mathfrak{l}^- \defeq \min\{0,l\}$ then $l^-\leq|u-l|$ and hence
\[
\mathrm{G}\left(\frac{\mathfrak{l}^-}{\lambda}\right)\leq \mathrm{G}\left(\frac{|u-\mathfrak{l}|}{\lambda}\right)
\]
and we get
\begin{equation}\label{eq.normaGl}
\|\mathfrak{l}^-\|_{L^{\mathrm{G}}(B_1)}\leq \|u-\mathfrak{l}\|_{L^{\mathrm{G}}(B_1)}\leq  \mathrm{C}\varepsilon^\tau \mathrm{G}(a)^\frac{1}{g_0+1}.
\end{equation}
\normalcolor

Next, we claim that
\begin{equation}\label{eq.lgeqca}
\mathfrak{l}(x)\geq \tilde{\mathrm{c}} \mathrm{G}(a)^\frac{1}{g_0+1}
\end{equation}
in $B_{1/2}$ for sufficiently small $\varepsilon$ and some constant $\tilde{c}$. Let us denote
\[
\sigma \defeq \mathrm{G} (a)^\frac{1}{g_0+1}
\]
to ease notation and suppose that \eqref{eq.lgeqca} does not hold, i.e. for any $c$ there exists $x_0\in B_{1/2}$ such that
\[
\mathfrak{l}(x_0) <  \mathrm{c} \sigma.
\]
Since $\mathfrak{l}(x_0)=q\cdot x_0+b$ and hence
\[
\mathfrak{l}(x)-b\in\left[\frac{-|q|}{2},\frac{|q|}{2}\right]
\]
this yields
\[
\mathrm{c} \sigma>b-\frac{|q|}{2}
\]
or
\begin{equation}\label{eq.c}
\mathrm{c} \sigma+\frac{|q|}{2}>b.
\end{equation}

Let
\[
\mathcal{B}\defeq \left\{x=-\frac{tq}{|q|}+\eta,t\in\left[\frac{6}{8},\frac{7}{8}\right],\eta\in B_{1/8}\right\}
\]
so that, in particular, $\mathcal{B}\subset B_1$.

Now, for $x\in \mathcal{B}$ and using \eqref{eq.c}
\[
\mathfrak{l}(x)=-t|q|+\eta\cdot q+b<\mathrm{c} \sigma+\frac{|q|}{2}-\frac{6}{8}|q|+\frac{1}{8}|q|=\mathrm{c}\sigma-\frac{1}{8}|q|
\]
but then \eqref{eq.q2} gives
\[
\mathfrak{l}(x)<\mathrm{c}\sigma-\frac{1}{8} \mathrm{G}^{-1}\left(\frac{1}{\mathrm{C}} \mathrm{G}\left(\frac{\sigma}{8}\right)\right)
\]
from which we get the bound
\[
\mathfrak{l}(x)<\mathrm{c}\sigma-\frac{\mathrm{C}}{64}\sigma.
\]
which taking $\mathrm{c}>0$ small enough gives $\mathfrak{l}(x)<-\bar{\mathrm{C}}\sigma$ for a positive constant $\bar{\mathrm{C}}$. Inserting this into \eqref{eq.normaGl} gives
\[
\varepsilon^\tau \sigma\geq \|\mathfrak{l}^-\|_{L^{\mathrm{G}}(B_1)}>\|\bar{\mathrm{C}}\sigma\|_{L^{\mathrm{G}}(B_1)}
\]
which is a contradiction if $\varepsilon>0$ is small enough. Therefore \eqref{eq.lgeqca} must hold.

The next step of the proof is divided into two cases according to the embedding properties of Orlicz-Sobolev spaces (see \cite[Sections 8.27-8.35]{adams2003sobolev}). First, if
\[
\int_1^\infty\frac{\mathrm{G}^{-1}(\tau)}{\tau^{1+\frac{1}{n}}}\:d\tau=\infty
\]
we consider the the Sobolev conjugate of $\mathrm{G}$
\[
(\mathrm{G}^\ast)^{-1}(t) \defeq \int_0^t\frac{\mathrm{G}^{-1}(\tau)}{\tau^{1+\frac{1}{n}}}\:d\tau.
\]
Then, we have that
\[
\|u-\mathfrak{l}\|_{L^{\mathrm{G}^\ast}(B_1)}\leq \mathrm{C}\|\nabla (u-\mathfrak{l})\|_{L^{\mathrm{G}}(B_1)}
\]
(here we have also used \eqref{eq.poinc}) which combined with \eqref{eq.nablaul} gives
\[
\|u-\mathfrak{l}\|_{L^{\mathrm{G}^\ast}(B_1)}\leq \mathrm{C}\varepsilon^\tau \mathrm{G}(a)^\frac{1}{g_0+1}.
\]
On the other hand, \eqref{eq.lgeqca} gives that
\[
\|\mathfrak{l}^-\|_{L^{\mathrm{G}^\ast}(B_1)}\geq \mathrm{c} \mathrm{G}(a)^\frac{1}{g_0+1}\|\chi_{\{u=0\}\cap B_{1/2}}\|_{\mathrm{G}^\ast}\geq \mathrm{c}\mathrm{G}(a)^\frac{1}{g_0+1}|\{u=0\}\cap B_{1/2}|^{1/(\delta+1)^\ast}
\]
and putting both inequalities together we obtain
\[
\mathrm{c}\mathrm{G}(a)^\frac{1}{g_0+1}|\{u=0\}\cap B_{1/2}|^{1/(\delta+1)^\ast}\leq \mathrm{C}\varepsilon^\tau \mathrm{G}(a)^\frac{1}{g_0+1}
\]
or
\[
|\{u=0\}\cap B_{1/2}|\leq \mathrm{C}\varepsilon^{\tau(\delta+1)^\ast}=\mathrm{C}\varepsilon^{\delta+1}\varepsilon^{\gamma}
\]
which is \eqref{eq.boundmeasure} with $\gamma=\tau(\delta+1)^\ast-(\delta+1)>0$ (being positive owing to \eqref{eq.exponentes}).

The second case is when
\[
\int_1^\infty\frac{\mathrm{G}^{-1}(\tau)}{\tau^{1+\frac{1}{n}}}\:d\tau<\infty,
\]
in which the Morrey-type embedding holds and (again using Poincar\'e)
\[
\sup_{B_1}|u-\mathfrak{l}|\leq \mathrm{C}\|\nabla (u-\mathfrak{l})\|_{L^{\mathrm{G}}(B_1)}
\]
so that
\[
\sup_{B_1}|u-\mathfrak{l}|\leq \mathrm{C}\varepsilon^\tau \mathrm{G}(a)^\frac{1}{g_0+1}.
\]
But then, using again \eqref{eq.lgeqca}, we have that in $B_{1/2}$
\[
\tilde{\mathrm{c}} \mathrm{G}(a)^\frac{1}{g_0+1}-u\leq \mathfrak{l}-u\leq\sup_{B_1}|u-\mathfrak{l}|\leq \mathrm{C}\varepsilon^\tau \mathrm{G}(a)^\frac{1}{g_0+1}
\]
so that, for $x\in B_{1/2}$
\[
\mathrm{G}(a)^\frac{1}{g_0+1}\left(\tilde{\mathrm{c}}-\mathrm{C}\varepsilon^\tau \right)\leq u(x)
\]
which means that $u(x)>0$ if $\varepsilon>0$ is small enough. In this case
\[
|\{u=0\}\cap B_{1/2}|=0
\]
and \eqref{eq.boundmeasure} is therefore proved.

With the aid of \eqref{eq.boundmeasure}, \eqref{eq.desig} reads
\begin{equation}\label{eq.umenosv}
\int_{B_{1/2}}\mathrm{G}(|\nabla u-\nabla v|)\,dx\leq \mathrm{C}\sigma(\mathrm{G}(a)+1)+\mathrm{C}_0\varepsilon^{\delta+1}\varepsilon^\gamma.
\end{equation}
This readily implies, using \eqref{eq.sums} and the hypothesis, that
\begin{align*}
\int_{B_{1/2}}\mathrm{G}(|\nabla v-q|)\,dx & \leq \mathrm{C}\left(\int_{B_{1/2}}\mathrm{G}(|\nabla u-q|)\,dx +\int_{B_{1/2}} \mathrm{G}(|\nabla u-\nabla v|)\,dx \right)\\
								  & \leq \mathrm{C}\varepsilon^{\delta+1} \mathrm{G}(a)+\mathrm{C}\sigma( \mathrm{G}(a)+1)+\mathrm{C}_0\varepsilon^{\delta+1}\varepsilon^\gamma.			
\end{align*}
Now using the fact that $\sigma\leq \mathrm{c}_0\varepsilon^{\delta+1}$ ($\mathrm{c}_0>0$ to be chosen later) and recalling that $\varepsilon^\gamma\leq \mathrm{G}(a)$ and $a\in[a_0,a_1]$ and the we arrive at
\begin{equation}\label{eq.vmenosq}
\int_{B_{1/2}} \mathrm{G}(|\nabla v-q|)\,dx  \leq \mathrm{C}\varepsilon^{\delta+1} \mathrm{G}(a).			
\end{equation}

The next step is to show that \eqref{eq.vmenosq} implies
\begin{equation}\label{eq.vmenosq'}
|\nabla v(x)-q|\,dx  \leq \mathrm{C}\cdot(\varepsilon a)^\nu,\:\forall x\in B_{1/2}			
\end{equation}
for some $\mathrm{C}>0, \nu\in(0,1)$ and $\varepsilon$ small enough.

Assume that \eqref{eq.vmenosq'} is false, then there exists $\hat{x}\in B_{1/2}$ such that
\[
|\nabla v(\hat{x})-q|\,dx  > \mathrm{C}\cdot(\varepsilon a)^\nu.	
\]
By interior regularity, for any $x\in B_{1/8}(\hat{x})$ we have
\[
|\nabla v(x)-\nabla v(\hat{x})|\,dx  \leq \mathrm{C}|x-\hat{x}|^\alpha
\]
with a constant $\mathrm{C}>0$ that, thanks to \eqref{eq.supavg}, depends only on $n,g_0$ and $a_1$. Then,
\begin{align*}
|\nabla v(x)-q| & \geq |\nabla v(\hat{x})-q|-|\nabla v(x)-\nabla v(\hat{x})| \\
                & > \mathrm{C}\cdot(\varepsilon a)^\nu-\mathrm{C}|x-\hat{x}|^\alpha \\
                & \geq \frac{\mathrm{C}\cdot(\varepsilon a)^\nu}{2}
\end{align*}
as long as $x\in B_{\left(\frac{\mathrm{C}\cdot(\varepsilon a)^\nu}{2}\right)^{1/\alpha}}(\hat{x})$ which is contained in $B_{1/2}$ for $\varepsilon$ small enough. Then
\begin{align*}
\int_{B_{1/2}} \mathrm{G}\left(|\nabla v(x)-q|\right) \:dx & \geq \int_{B_{\left(\frac{\mathrm{C}\cdot(\varepsilon a)^\nu}{2\mathrm{C}}\right)^{1/\alpha}}(\hat{x})} \mathrm{G}\left(|\nabla v(x)-q|\right)\,dx \\
												 & \geq \mathrm{G}\left(\frac{\mathrm{C}\cdot(\varepsilon a)^\nu}{2\mathrm{C}}\right)\left|\frac{\mathrm{C}\cdot(\varepsilon a)^\nu}{2\mathrm{C}}\right|^{n/\alpha}|B_1|.
\end{align*}
This, together with \eqref{eq.vmenosq} gives
\[
\mathrm{G}\left(\frac{\mathrm{C}\cdot(\varepsilon a)^\nu}{2}\right)\left(\frac{\mathrm{C}\cdot(\varepsilon a)^\nu}{2}\right)^{n/\alpha}|B_1|\leq \mathrm{C}\varepsilon^{\delta+1} \mathrm{G}(a)
\]
or, using \eqref{eq.delta2less1},
\[
\left(\frac{\mathrm{C}\cdot(\varepsilon a)^\nu}{2}\right)^{g_0+1+\frac{n}{\alpha}}|B_1|\leq \varepsilon^{\delta+1} \mathrm{G}(a)
\]
which gives a contradiction choosing $\nu$ appropriately and for $\varepsilon$ small enough thus proving \eqref{eq.vmenosq'}.

Next we consider the vectorial function $\overrightarrow{\mathbf{F}}:\R^n\longrightarrow\R^n$ given by
\[
\overrightarrow{\mathbf{F}}(z)\defeq g(|z|)\frac{z}{|z|}
\]
and compute
\begin{align*}
\overrightarrow{\mathbf{F}}(\nabla v)-\overrightarrow{\mathbf{F}}(q) & =\int_0^1\frac{d}{dt}\overrightarrow{\mathbf{F}}\left(q+t(\nabla v-q)\right)\:dt \\
				 & =\int_0^1D\overrightarrow{\mathbf{F}}\left(q+t(\nabla v-q)\right)\cdot(\nabla v-q)\:dt \\
				 & =\mathfrak{A}(x)(\nabla v-q)
\end{align*}
with $\displaystyle \mathfrak{A}(x)\defeq \int_0^1 D\overrightarrow{\mathbf{F}}\left(q+t(\nabla v-q)\right)\:dt$. Nonetheless, we have that
\[
\diver \overrightarrow{\mathbf{F}}(\nabla v)=\diver \overrightarrow{\mathbf{F}}(q)=0
\]
and hence
\[
\diver\left(\mathfrak{A}(x)(\nabla v-q)\right)=0.
\]
Applying interior estimates (i.e. using Proposition \ref{prop.reg} with Remark \ref{remark.reg} and Lemma \ref{lem.ellipticity}) and using \eqref{eq.vmenosq} we have that
\[
\mathrm{G}(|\nabla v(x)-q)|)\leq \sup_{B_{R/4}(x)}\mathrm{G}\left(|\nabla v-q|\right)\leq \mathrm{C}\intav{B_{R/2}(x)}\mathrm{G}\left(|\nabla v-q|\right)\:dx\leq \mathrm{C}\mathrm{G}(a)\varepsilon^{\delta+1}.
\]
or, since $\mathrm{G}$ is increasing,
\[
|\nabla v(x)-q|\leq  \mathrm{C}\mathrm{G}^{-1}\left(\mathrm{G}(a)\varepsilon^{\delta+1}\right).
\]
In particular,
\[
|\bar{q}| \defeq |\nabla v(0)-q|\leq \mathrm{C}\mathrm{G}^{-1}\left(\mathrm{G}(a)\varepsilon^{\delta+1}\right)
\]
so that
\[
|\nabla v(x)-q-\bar{q}|\leq 2\mathrm{C}\mathrm{G}^{-1}\left(\mathrm{G}(a)\varepsilon^{\delta+1}\right),\:x\in B_{1/2}.
\]
This, using again \eqref{eq.osc} in Proposition \ref{prop.reg} we have
\[
\intav{B_r} \mathrm{G}(|\nabla v-q-\bar{q}|)\,dx\leq\intav{B_r} \mathrm{G}\left(\mathrm{C}\left(\frac{r}{\frac{1}{2}}\right)^{\mu} \sup_{B_\frac{1}{2}}|\nabla v-q-\bar{q}|\right)\leq \mathrm{C} r^{\mu(\delta+1)} \mathrm{G}(a)\varepsilon^{\delta+1}.
\]
This and \eqref{eq.umenosv}, together with the doubling condition, give
\begin{align*}
\intav{B_r} \mathrm{G}(|\nabla u-q-\bar{q}|)\,dx & \leq \mathrm{C}\left(\intav{B_r} \mathrm{G}\left(|\nabla u-\nabla v|\right)\:dx +\intav{B_r} \mathrm{G}(|\nabla v-q-\bar{q}|)\,dx \right)\\
									   & \leq \frac{\mathrm{C}}{r^n}\left(\sigma( \mathrm{G}(a)+1)+\mathrm{C}_0\varepsilon^{\delta+1}\varepsilon^\gamma\right)+\mathrm{C}r^{\mu(\delta+1)} \mathrm{G}(a)\varepsilon^{\delta+1}\\
									   & \leq \tilde{\mathrm{C}}r^{-n}\sigma( \mathrm{G}(a)+1)+\tilde{\mathrm{C}}r^{-n}\varepsilon^{\delta+1}\varepsilon^\gamma+\tilde{\mathrm{C}}r^{\mu(\delta+1)} \mathrm{G}(a)\varepsilon^{\delta+1}.
\end{align*}

Now defining $\alpha_0 \defeq \mu$ and for $\alpha<\alpha_0$, we set
\[
r^{\delta+1}=(3\tilde{\mathrm{C}})^{\frac{1}{\alpha-\alpha_0}},\quad \varepsilon_0 \defeq \left(\frac{r^{\alpha(\delta+1)+n} \mathrm{G}(a_0)}{3\tilde{\mathrm{C}}}\right)^{\frac{1}{\gamma}},\quad \mathrm{c}_0 \defeq \frac{r^{\alpha(\delta+1)+n} \mathrm{G}(a_0)}{3\tilde{\mathrm{C}}( \mathrm{G}(a_1)+1)}
\]
so that
\begin{align*}
&\tilde{\mathrm{C}}r^{\mu(\delta+1)} \mathrm{G}(a)\varepsilon^{\delta+1} =\tilde{\mathrm{C}}r^{(\alpha_0-\alpha)(\delta+1)}r^{\alpha(\delta+1)} \mathrm{G}(a)\varepsilon^{\delta+1} =\frac{1}{3}r^{\alpha(\delta+1)} \mathrm{G}(a)\varepsilon^{\delta+1} \\
& \tilde{\mathrm{C}}r^{-n}\varepsilon^{\delta+1}\varepsilon^\gamma \leq \tilde{\mathrm{C}}r^{-n}\frac{r^{\alpha(\delta+1)+n} \mathrm{G}(a_0)}{4\tilde{\mathrm{C}}}\varepsilon^{\delta+1}\leq\frac{1}{3}r^{\alpha(\delta+1)} \mathrm{G}(a)\varepsilon^{\delta+1}  \\
&\tilde{\mathrm{C}}r^{-n}\sigma( \mathrm{G}(a)+1)\leq \tilde{\mathrm{C}}r^{-n}\frac{r^{\alpha(\delta+1)+n} \mathrm{G}(a_0)}{4\tilde{\mathrm{C}}( \mathrm{G}(a_1)+1)}\varepsilon^{\delta+1}(\mathrm{G}(a)+1)\leq \frac{1}{3}r^{\alpha(\delta+1)}\mathrm{G}(a)\varepsilon^{\delta+1}.
\end{align*}

All this gives
\[
\intav{B_r} \mathrm{G}(|\nabla u-q-\bar{q}|)\,dx \leq r^{\alpha(\delta+1)} \mathrm{G}(a)\varepsilon^{\delta+1}
\]
and hence the desired result with $\tilde{q} \defeq q+\bar{q}$.
\end{proof}

As a consequence of the previous lemma we obtain the crucial $L^\infty$ estimate for the gradient of an almost minimizer for sufficiently small $\kappa$.

\begin{corollary}\label{cor}
Let $a_1>a_0>0$ and let $u$ be an almost minimizer of $\mathcal{J}_G$ in $B_1$ with constant $\kappa$ and exponent $\beta$. Set
\[
\mathrm{G}(a) \defeq \intav{B_1} \mathrm{G}(|\nabla u|)\,dx
\]
and assume there exist $q\in\R^n$ and constants $\mathrm{C}_0,\mathrm{C}_1>0$ such that
\begin{equation}\label{eq.hipcor}
\mathrm{G}(a)\in[a_0,a_1],\quad \intav{B_1} \mathrm{G}(|\nabla u-q|)\,dx\leq \mathrm{C}_1\varepsilon^{\delta+1}\mathrm{G}(a),\quad \mathrm{G}^{-1}\left(\frac{1}{C} \mathrm{G}\left(\frac{a}{4}\right)\right)<|q|< 2\mathrm{C}_0 a.
\end{equation}

Then there exist $\varepsilon_0, \kappa_0>0$ and $\gamma\in(0,1)$ depending on $n,\delta,g_0,\beta,a_0$ and $a_1$ such that for $0<\varepsilon\leq \varepsilon_0$ and $0<\kappa\leq \kappa_0\varepsilon^{\delta+1}$ there exists  a linear function
\[
\mathfrak{l}(x)\defeq q\cdot x+b
\]
such that
\[
\|u-\mathfrak{l}\|_{C^{1,\gamma}(B_{1/2})}\leq \mathrm{C}\mathrm{G}^{-1}\left(\mathrm{C} \rho^{-n-\alpha(\delta+1)}\varepsilon^{\delta+1}\mathrm{G}(a)\right)
\]
where $\mathrm{C}=\mathrm{C}(n,\delta,g_0)>0$.

Moreover
\begin{equation}\label{eq.cotagradiente}
\|\nabla u\|_{L^\infty(B_{1/2})}\leq  \hat{\mathrm{C}}a
\end{equation}
where $\hat{\mathrm{C}}>0$ is a universal constant.
\end{corollary}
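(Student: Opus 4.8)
The plan is twofold. First, I would iterate Lemma \ref{lemalargo} on a dyadic sequence of balls, centered at an arbitrary point $x_0\in B_{1/2}$, so as to produce a Campanato-type decay of the Orlicz modular of $\nabla u$, and then invoke the Campanato characterization of Theorem \ref{campanato} to upgrade this decay to $\nabla u\in C^{0,\gamma}(B_{1/2})$ with the stated quantitative bound (the linear function being $\mathfrak{l}(x)=q\cdot x+b$ with $b$ a suitable mean of $u-q\cdot x$, so that Poincar\'e's inequality controls $\|u-\mathfrak{l}\|_{L^\infty(B_{1/2})}$ by the same quantity). Second, the interior estimate \eqref{eq.cotagradiente} then comes for free: since $\|\nabla u-q\|_{L^\infty(B_{1/2})}\leq\mathrm{C}\,\mathrm{G}^{-1}\big(\mathrm{C}\varepsilon^{\delta+1}\mathrm{G}(a)\big)$, which by \eqref{minmax3} is at most $\mathrm{C}\,a\,\varepsilon^{\tau}$ (recall $\tau=\frac{\delta+1}{g_0+1}$) and hence at most $a$ once $\varepsilon$ is small, one gets $\|\nabla u\|_{L^\infty(B_{1/2})}\leq|q|+\|\nabla u-q\|_{L^\infty(B_{1/2})}\leq 2\mathrm{C}_0a+a=\hat{\mathrm{C}}a$.

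For the iteration, fix $x_0\in B_{1/2}$; after a translation assume $x_0=0$. Let $\alpha\in(0,\alpha_0)$ and $r\in(0,1)$ be the constants furnished by Lemma \ref{lemalargo}, with $\alpha$ chosen moreover so that $\alpha<\frac{\beta}{\delta+1}$ (which makes $\theta^{\delta+1}\geq r^{\beta}$ for $\theta:=\mathrm{C}^{1/(\delta+1)}r^{\alpha}<1$), set $u_k(x):=u(r^kx)/r^k$, $\varepsilon_k:=\theta^k\varepsilon$, and $\mathrm{G}(a_k):=\intav{B_1}\mathrm{G}(|\nabla u_k|)\,dx$. I would prove by simultaneous induction on $k$ that $a_k$ stays in a fixed enlarged interval $[\tilde a_0,\tilde a_1]\supset[a_0,a_1]$ and that there are vectors $q_k\in\R^n$ with
\[
\intav{B_1}\mathrm{G}(|\nabla u_k-q_k|)\,dx\leq\varepsilon_k^{\delta+1}\,\mathrm{G}(a_k),\qquad |q_{k+1}-q_k|\leq\tilde{\mathrm{C}}\varepsilon_k^{\tau}a_k,
\]
the case $k=0$ being exactly hypothesis \eqref{eq.hipcor} (after absorbing $\mathrm{C}_1$ into $\varepsilon$). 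For the inductive step one applies Lemma \ref{lemalargo} to $u_k$---which, by the scaling property of almost minimizers, is an almost minimizer in $B_{1/r^k}\supset B_1$ with constant $\kappa r^{k\beta}$---and rescales the conclusion back to $B_{r^{k+1}}$. Three hypotheses of Lemma \ref{lemalargo} must be checked at each step: (i) the confinement $\mathrm{G}(a_k)\in[\tilde a_0,\tilde a_1]$, which holds because $\nabla u_k$ is close to $q_k$ in modular average while the increments $|q_{k+1}-q_k|$ are summable and arbitrarily small for $\varepsilon$ small, so $\mathrm{G}(a_k)$ never strays from a bounded factor of $\mathrm{G}(|q|)\sim\mathrm{G}(a)$; (ii) the normalization $\mathrm{G}^{-1}\big(\tfrac1{\mathrm{C}}\mathrm{G}(a_k/8)\big)\leq|q_k|\leq 2\mathrm{C}_0a_k$ of \eqref{eq.q2}, propagated using the slack between the constants in \eqref{eq.hipcor} and \eqref{eq.q2} together with the smallness of $|q_k-q|$; and (iii) the smallness of the almost-minimizer defect, $\sigma_k=\kappa r^{k\beta}\leq c_0\varepsilon_k^{\delta+1}$: with the choices above, $\kappa\leq\kappa_0\varepsilon^{\delta+1}$ and $\kappa_0\leq c_0$ one has $\kappa r^{k\beta}\leq\kappa_0\varepsilon^{\delta+1}r^{k\beta}\leq c_0\varepsilon^{\delta+1}\theta^{k(\delta+1)}=c_0\varepsilon_k^{\delta+1}$.

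Summing $|q_{k+1}-q_k|\leq\tilde{\mathrm{C}}\varepsilon_k^{\tau}a_k\leq\mathrm{C}\theta^{k\tau}\varepsilon^{\tau}a$ shows $(q_k)_k$ is Cauchy; let $q_\infty(x_0)=\lim_k q_k$, so $|q_\infty(x_0)-q|\leq\mathrm{C}\varepsilon^{\tau}a\leq\mathrm{C}\,\mathrm{G}^{-1}\big(\mathrm{C}\varepsilon^{\delta+1}\mathrm{G}(a)\big)$ by \eqref{minmax3}. Undoing the rescalings and interpolating over the dyadic scales, the inductive estimates give, for every $0<\rho\leq\tfrac12$ and uniformly in $x_0\in B_{1/2}$,
\[
\intav{B_\rho(x_0)}\mathrm{G}(|\nabla u-q_\infty(x_0)|)\,dx\leq\mathrm{C}\,\rho^{\alpha(\delta+1)}\,\varepsilon^{\delta+1}\,\mathrm{G}(a).
\]
This is precisely the input required by the Campanato-type Theorem \ref{campanato}, which then yields $\nabla u\in C^{0,\gamma}(B_{1/2})$ with $\gamma=\alpha$, the identification $\nabla u(x_0)=q_\infty(x_0)$, and the bound on $[\nabla u]_{C^{0,\gamma}(B_{1/2})}+\|\nabla u-q\|_{L^\infty(B_{1/2})}$ of the form stated in the Corollary; together with the first paragraph this proves both conclusions.

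The main obstacle is the bookkeeping needed to keep every hypothesis of Lemma \ref{lemalargo} valid throughout the iteration. One must simultaneously: (a) confine the drift of the rescaled average energies $a_k$ to a fixed compact interval---so that the constants $r,\varepsilon_0,c_0$ produced by the lemma do not degenerate---which forces one to estimate $\intav{B_1}\mathrm{G}(|\nabla u_k|)$ against $\mathrm{G}(|q_k|)$ with a multiplicative error of the form $1+o(1)$ rather than the crude $\Delta_2$-splitting constant $2^{g_0}$; (b) propagate the normalization of $q_k$, which works only because of the deliberate mismatch between the constants in \eqref{eq.hipcor} and \eqref{eq.q2}; and (c) reconcile the almost-minimizer exponent $\beta$ with the decay exponent $\alpha$, dictating the choices $\alpha<\frac{\beta}{\delta+1}$ and $\kappa\leq\kappa_0\varepsilon^{\delta+1}$. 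A second, genuinely new, ingredient is Theorem \ref{campanato} itself: classical Campanato theory, which translates integral oscillation decay into H\"older continuity, is formulated for norms, and here it is its analogue for Orlicz \emph{modulars} that legitimizes the passage from the displayed modular decay to the pointwise $C^{1,\gamma}$ bound in this non-homogeneous setting.
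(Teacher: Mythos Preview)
Your approach is essentially the same as the paper's: iterate Lemma \ref{lemalargo} to obtain a geometric decay of the modular $\intav{B_{\rho^k}}\mathrm{G}(|\nabla u-q_k|)\,dx$, control the drift of both $q_k$ and the rescaled energies so the lemma's hypotheses persist, feed the resulting decay into the Campanato-type Theorem \ref{campanato}, and deduce \eqref{eq.cotagradiente} from $\|\nabla u-q\|_{L^\infty}$ plus the bound on $|q|$. The paper organizes the bookkeeping slightly differently---it keeps the \emph{original} $\mathrm{G}(a)$ fixed on the right-hand side of the decay estimate (your \eqref{eq.induccion1} with $\varepsilon_k=\rho^{k\alpha}\varepsilon$) and proves separately that the rescaled averages remain comparable to $\mathrm{G}(|q|)$ (their \eqref{eq.induccion3}), rather than letting the $a_k$ float and absorbing the ratio into $\theta$ as you do---but the content is the same, and your explicit recentering at each $x_0\in B_{1/2}$ is in fact what the paper is tacitly using when it bounds the Campanato seminorm over balls $B_{\rho^k}(x_0)$.
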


\begin{proof}

We may assume that $u$ is an almost minimizer in $B_2$ with the same constant and exponent. The goal is to iterate Lemma \ref{lemalargo} with $\alpha\defeq \min\left\{\frac{\alpha_0}{2},\frac{\beta}{g_0+1}\right\}$, $\alpha_0$ from Lemma \ref{lemalargo}.

We are going to show by induction that for some $\rho\in(0,1),\:\tilde{\mathrm{C}}>0$ universal and all $k\geq0$ there exists $q_k\in\R^n$ such that
\begin{align}
&\intav{B_{\rho^k}} \mathrm{G}\left(|\nabla u-q_k|\right)\:dx\leq \mathrm{C}\rho^{k\alpha(\delta+1)}\varepsilon^{\delta+1} \mathrm{G}(a), \label{eq.induccion1}\\
&\mathrm{G}^{-1}\left(\frac{1}{\mathrm{C}} \mathrm{G}\left(\frac{a}{8}\right)\right)-\tilde{\mathrm{C}}\varepsilon^\tau a\left(\frac{1-\rho^{k\alpha\tau}}{1-\rho^{\alpha\tau}}\right)\leq |q_k| \leq 2\mathrm{C}_0a+\tilde{\mathrm{C}}\varepsilon^\tau a\left(\frac{1-\rho^{k\alpha\tau}}{1-\rho^{\alpha\tau}}\right), \label{eq.induccion2} \\
&\mathrm{G}\left(\frac{|q|}{2}\right)\leq \intav{B_{\rho^k}} \mathrm{G}\left(|\nabla u|\right)\:dx\leq 2 \mathrm{G}\left(|q|\right)\label{eq.induccion3}
\end{align}
with
\[
\tau \defeq \left(\frac{\delta+1}{g_0+1}\right).
\]

For $k=0$, pick $q_0\defeq q$; then \eqref{eq.induccion1} and \eqref{eq.induccion2} follow just from the corresponding inequalities in \eqref{eq.hipcor}.

Now for the inductive step let $r\defeq \rho^k$ and $u_r(x) \defeq \frac{1}{r}u(rx)$ and get, by inductive hypothesis, that
\[
\intav{B_1} \mathrm{G}\left(|\nabla u_r-q_k|\right)\:dx\leq \mathrm{C}_1\rho^{k\alpha(\delta+1)}\varepsilon^{\delta+1} \mathrm{G}(a)=\mathrm{C}_1\varepsilon_k^{\delta+1}\mathrm{G}(a)
\]
if we set $\varepsilon_k\defeq \rho^{k\alpha}\varepsilon$. Then, Lemma \ref{lemalargo} implies that there exists $q_{k+1}\in\R^n$ such that
\[
\intav{B_\rho} \mathrm{G}\left(|\nabla u_r-q_{k+1}|\right)\:dx\leq \mathrm{C}_1\varepsilon_k^{\delta+1}\mathrm{G}(a)\rho^{(\delta+1)\alpha}=\mathrm{C}_1\rho^{(k+1)\alpha(\delta+1)}\varepsilon^{\delta+1} \mathrm{G}(a)
\]
and
\begin{equation}\label{eq.difq}
|q_k-q_{k+1}|<\tilde{\mathrm{C}}a\varepsilon_k^\tau=\mathrm{C} a\varepsilon^\tau \rho^{k\alpha\tau}.
\end{equation}
But, rescaling the integral back (and recalling the definition of $r$), we have that
\begin{align*}
\intav{B_\rho} \mathrm{G}\left(|\nabla u_r-q_{k+1}|\right)\:dx & = \frac{1}{\rho^n|B_1|}\int_{B_\rho} \mathrm{G}\left(|\nabla u_r-q_{k+1}|\right)\:dx \\
												  & = \frac{1}{(r\rho)^n|B_1|}\int_{B_r\rho} \mathrm{G}\left(|\nabla u-q_{k+1}|\right)\:dx \\
												  & = \intav{B_{\rho^{k+1}}} \mathrm{G}\left(|\nabla u-q_{k+1}|\right)\:dx
\end{align*}
so \eqref{eq.induccion1} is proved. Incidentally, we point out that the application of Lemma \ref{lemalargo} is where the smallness restriction on $\kappa$ appears.

Regarding \eqref{eq.induccion2}, we can compute, using the inductive hypothesis and \eqref{eq.difq},
\begin{align*}
|q_{k+1}| & \leq |q_{k+1}-q_k|+|q_k| \\
		  & \leq \mathrm{C} a\varepsilon^\tau \rho^{k\alpha\tau}+2\mathrm{C}_0a+\tilde{\mathrm{C}}\varepsilon^\tau a\left(\frac{1-\rho^{k\alpha\tau}}{1-\rho^{\alpha\tau}}\right).
\end{align*}
Then, we have that
\begin{equation}\label{cotaqk}
|q_{k+1}| \leq 2\mathrm{C}_0a+\tilde{\mathrm{C}}\varepsilon^\tau a \left(\rho^{k\alpha\tau} +\frac{1-\rho^{k\alpha\tau}}{1-\rho^{\alpha\tau}}\right)=2\mathrm{C}_0a+\tilde{\mathrm{C}}\varepsilon^\tau a \left(\frac{1-\rho^{(k+1)\alpha\tau}}{1-\rho^{\alpha\tau}}\right)
\end{equation}
which is the upper bound on \eqref{eq.induccion2}. For the lower bound, recall again that the inductive hypothesis and \eqref{eq.difq} give
\begin{align*}
|q_{k+1}| & \geq |q_k| -|q_{k+1}-q_k|\\
 		  &\geq \mathrm{G}^{-1}\left(\frac{1}{\mathrm{C}}\mathrm{G}\left(\frac{a}{8}\right)\right)-\tilde{\mathrm{C}}\varepsilon^\tau a\left(\frac{1-\rho^{k\alpha\tau}}{1-\rho^{\alpha\tau}}\right)-\tilde{\mathrm{C}}a\varepsilon^\tau\rho^{k\alpha\tau}a.
\end{align*}

It remains to prove \eqref{eq.induccion3}.
\begin{align*}
\intav{B_{\rho^{k+1}}}\mathrm{G}\left(|\nabla u|\right)\:dx -\mathrm{G}(|q|) & \leq \frac{\mathrm{C}}{2}\left(\intav{B_{\rho^{k+1}}}\mathrm{G}\left(|\nabla u-q_{k+1}|\right)\:dx +\mathrm{G}(|q_{k+1}|) -\mathrm{G}(|q|)\right)\\
															&\leq \frac{\mathrm{C}}{2}\left(\mathrm{C}\rho^{(k+1)\alpha(\delta+1)}\varepsilon^{\delta+1}\mathrm{G}(a)+\sum_{j=0}^k|\mathrm{G}(|q_{j+1}|) -\mathrm{G}(|q_j|)|\right).
\end{align*}

Now, for some $q_j^\ast$ between $q_{j+1}$ and $q_j$ we have
\[
|\mathrm{G}(|q_{j+1}|) -\mathrm{G}(|q_j|)| =g\left(|q_j^\ast|\right)|q_{j+1}-q_j|
\]
and recalling that $g$ is increasing we have
\begin{align*}
\sum_{j=0}^k|\mathrm{G}(|q_{j+1}|) -\mathrm{G}(|q_j|)| & \leq \sum_{j=0}^k g\left(|q_{j+1}|\right)|q_{j+1}-q_j| \\
&\leq \mathrm{C}\sum_{j=0}^k (\mathrm{c} g(a)+\mathrm{c}\varepsilon^{\delta\tau}g(a)) a(\varepsilon\rho^{j\alpha})^\tau\\
&= \mathrm{C} (\mathrm{c} g(a)a\varepsilon^{\tau}+\mathrm{c}\varepsilon^{\delta\tau}g(a)a\varepsilon^\tau)\sum_{j=0}^k\rho^{j\alpha\tau}\\
&\leq \mathrm{C} \left(\mathrm{G}(a)\varepsilon^{\tau}+\varepsilon^{(\delta+1)\tau}\mathrm{G}(a)\right)\frac{1}{1-\rho^{\alpha\tau}}\\
&=\mathrm{C} \left(1+\frac{1}{1-\rho^{\alpha\tau}}\right)\varepsilon^{\tau}\mathrm{G}(a)
\end{align*}
Putting this inequality together with the previous one, we wind up with
\[
\left|\intav{B_{\rho^{k+1}}}\mathrm{G}\left(|\nabla u|\right)\:dx -\mathrm{G}(|q|) \right|\leq \mathrm{C}(\varepsilon^\tau+\varepsilon^\delta) \mathrm{G}(a)\leq \mathrm{G}\left(\frac{|q|}{2}\right)
\]
as desired.

Now we want to apply the Campanato-type estimate in the Appendix (Theorem \ref{campanato}) with $\Omega=B_{1/2}$ and $\lambda=n+\alpha(\delta+1)>n$; for that, we must first show that
\begin{equation}\label{eq.semicampa}
\varrho^{-\lambda}\inf_{\xi\in\R^n}\int_{B_{1/2}\cap B_{\varrho}(x_0)}\mathrm{G}(|\nabla u-q-\xi|)\:dx\leq \mathrm{C} \varepsilon^{\delta+1}\mathrm{G}(a).
\end{equation}
For that we split the cases $\varrho\in(0,1)$ and $\varrho\geq1$.

In the latter case the bound is obtained by
\[
\varrho^{-\lambda}\inf_{\xi\in\R^n}\int_{B_{1/2}\cap B_{\varrho}(x_0)}\mathrm{G}(|\nabla u-q-\xi|)\:dx \leq \int_{B_{1/2}}\mathrm{G}(|\nabla u-q|)\:dx\leq |B_1|\intav{B_1} \mathrm{G}(|\nabla u-q|)\:dx
\]
and the hypothesis.

If $\varrho\in(0,1)$ then we use a $\rho-$adyc argument: let $\rho^{k+1}\leq\varrho\leq\rho^k$ and use the \eqref{eq.induccion1} to bound
\begin{align*}
\varrho^{-\lambda}\inf_{\xi\in\R^n}\int_{B_{1/2}\cap B_{\varrho}(x_0)} \mathrm{G}(|\nabla u-q-\xi|)\:dx & \leq \rho^{-\lambda(k+1)}\int_{B_{1/2}\cap B_{\rho^k}(x_0)} \mathrm{G}(|\nabla u-q_k|)\:dx \\
																							  & \leq \rho^{-\lambda(k+1)+kn}|B_1|\intav{B_{\rho^k}(x_0)} \mathrm{G}(|\nabla u-q_k|)\:dx \\
																							  & \leq \mathrm{C}\rho^{-(n+\alpha(\delta+1))(k+1)+kn+k\alpha(\delta+1)}|B_1|\varepsilon^{\delta+1} \mathrm{G}(a)\\
																							  & = \mathrm{C}\rho^{-n-\alpha(\delta+1)}|B_1|\varepsilon^{\delta+1} \mathrm{G}(a)
\end{align*}
and we get \eqref{eq.semicampa}.

Since we already have a bound on $\displaystyle \int_{B_{1/2}} \mathrm{G}(|\nabla u-q|)\:dx$, Theorem \ref{campanato} says that $\nabla u-q$ belongs to $C^{0, \gamma}(B_{1/2})$ and
\begin{equation}\label{eq.semiholder}
[\nabla u-q]_{C^{0, \gamma}(B_{1/2})}\leq \mathrm{C}\mathrm{G}^{-1}\left(\rho^{-n-\alpha(\delta+1)}\varepsilon^{\delta+1} \mathrm{G}(a)\right).
\end{equation}

Now if we define $\mathfrak{l}(x)=u(0)+x\cdot q$ then in $B_{1/2}$ it holds
\[
|u(x)-\mathfrak{l}(x)|=\left|\frac{d}{dt}\int_0^1\left(\nabla u(tx)-q\right)\cdot x\:dt\right|\leq \mathrm{C}\mathrm{G}^{-1}\left( \rho^{-n-\alpha(\delta+1)}\varepsilon^{\delta+1} \mathrm{G}(a)\right)
\]
and therefore
\[
\|u-\mathfrak{l}\|_{L^\infty(B_{1/2})}\leq \mathrm{C}\mathrm{G}^{-1}\left( \rho^{-n-\alpha(\delta+1)}\varepsilon^{\delta+1} \mathrm{G}(a)\right)
\]
which together with \eqref{eq.semiholder} and a standard interpolation inequality (see for instance \cite[page 9]{fernandez2022regularity}) gives the first part of the result.

As for \eqref{eq.cotagradiente}, simply note that
\[
\|\nabla u\|_{L^\infty(B_{1/2})}\leq \|\nabla (u-q)\|_{L^\infty(B_{1/2})}+|q|
\]
But taking limit in \eqref{cotaqk} we have
\[
|q|\leq 2\mathrm{C}_0a+\tilde{\mathrm{C}}\varepsilon^\tau a \left(\frac{1}{1-\rho^{\alpha\tau}}\right)
\]
and therefore
\[
\|\nabla u\|_{L^\infty(B_{1/2})}\leq \mathrm{C}\mathrm{G}^{-1}\left(\mathrm{C} \rho^{-n-\alpha(\delta+1)}\varepsilon^{\delta+1} \mathrm{G}(a)\right)+2\mathrm{C}_0a+\tilde{\mathrm{C}}\varepsilon^\tau a \left(\frac{1}{1-\rho^{\alpha\tau}}\right)\leq \hat{\mathrm{C}}a
\]
as desired.
\end{proof}

\section{Proof of the Theorem \ref{thm.main}}\label{sec.main}

In this section we give the proof of Theorem \ref{thm.main}. As is customary we reduce the proof to the case where $\Omega$ is a ball, the general case following from a covering procedure.

\begin{proof}[Proof of Theorem \ref{thm.main}]
Let us define
\[
\mathrm{G}\left(a(\tau)\right) \defeq \intav{B_\tau}\mathrm{G}\left(|\nabla u|\right)\:dx
\]
and for $r\in(0,\eta]$ we want to show that
\begin{equation}\label{eq.boundar}
\mathrm{G}(a(r))\leq \mathrm{C}(\eta, \mathrm{M})\left(1+\mathrm{G}(a(1))\right),
\end{equation}
Let consider the set $\mathcal{K}\subset\N=\{0,1,2,\cdots\}$ containing all the $k$ such that
\begin{equation}\label{eq.recurrence}
\mathrm{G}(a(\eta^k))\leq \eta^{-n}\mathrm{M}+2^{-k}\mathrm{G}(a(1)).
\end{equation}
It is easy to check that $0\in\mathcal{K}$, then $\mathcal{K}\neq\emptyset$.
So, we have two different possibilities: $\mathcal{K}=\N$ or $\mathcal{K}\subset\N$ and $\mathcal{K}\neq\N$.\\

In the case $\mathcal{K}=\N$ we have that for $r\in(0,1)$ there exists a (minimum) value of $k\geq0$ such that $\eta^{k+1}\leq r<\eta^k$. Then
\begin{align*}
\mathrm{G}(a(r)) & = \intav{B_r}\mathrm{G}\left(|\nabla u|\right)\:dx \\
     & \leq \frac{|B_{\eta^k}|}{|B_r|}\intav{B_{\eta^k}} \mathrm{G}\left(|\nabla u|\right)\:dx \\
     & \leq \eta^{-n} \mathrm{G}(a(\eta^{k}))\\
&\leq \eta^{-n}\left( \eta^{-n}\mathrm{M}+2^{-k}\mathrm{G}(a(1))\right)\\
	&\leq \mathrm{C}(\eta,\mathrm{M})(1+\mathrm{G}(a(1)))
\end{align*}
so \eqref{eq.boundar} holds.


In the other case, there exists a minimum integer $k+1$ for which the inequality fails \eqref{eq.recurrence}. Further, it must happen that
\begin{equation}\label{M}
\mathrm{G}(a(\eta^k))>\mathrm{M};
\end{equation}
if not, we have that
\[
\mathrm{G}(a(\eta^{k+1}))\leq \eta^{-n}\mathrm{G}(a(\eta^{k}))\leq \eta^{-n}\mathrm{M}\leq \eta^{-n}\mathrm{M}+2^{-{(k+1)}}\mathrm{G}(a(1))
\]
and \eqref{eq.recurrence} is also satisfied for $k+1$.

Now, $\mathrm{G}(a(\eta^k))>\mathrm{M}$ gives, owing to Lemma \ref{lem.dichotomy} in $B_\eta$ we have that either
\[
\mathrm{G}(a(\eta^{k+1}))\leq \mathrm{G}\left(\frac{a(\eta^k)}{2}\right)
\]
or
$$
\intav{B_{\eta^{k+1}}} \mathrm{G}(|\nabla u-q|)\,dx\leq \varepsilon^{\delta+1}\mathrm{G}(a(\eta^{k}))
$$
where $q\in\R^n$ verifies
$$
\frac{1}{\mathrm{C}}\mathrm{G}\left(\frac{a(\eta^{k})}{4}\right)<\mathrm{G}(|q|)<\mathrm{C} \mathrm{G}(a((\eta^{k})).
$$
Suppose that $ \mathrm{G}(a(\eta^{k+1}))\leq \mathrm{G}(\frac{a(\eta^k)}{2})\leq \frac{\mathrm{G}(a(\eta^k))}{2}$. Since
\begin{equation}\label{mitad}
\mathrm{G}(a(\eta^{k+1}))>\eta^{-n}\mathrm{M}+2^{-{(k+1)}}\mathrm{G}(a(1))\geq\frac{\eta^{-n}\mathrm{M}+2^{-{k}}\mathrm{G}(a(1))}{2}\geq \frac{\mathrm{G}(a(\eta^{k}))}{2}
\end{equation}
this leads to a contradiction.

If the second alternative holds true, then we are in situation to apply Corollary \ref{cor}. Observe that by \eqref{M} and \eqref{mitad} we have that
$$
\mathrm{G}(a(\eta^{k+1}))\geq \frac{\mathrm{M}}{2}.
$$
On the other hand,
$$
\mathrm{G}(a(\eta^{k+1}))\leq\eta^{-n}\mathrm{G}(a(\eta^k))\leq \eta^{-n}(\eta^{-n}\mathrm{M}+ 2^{-k}\mathrm{G}(a(1)))\leq \eta^{-n}(\eta^{-n}\mathrm{M}+ \mathrm{G}(a(1))).
$$
Now, we apply Corollary \ref{cor}, with $a_0=\frac{\mathrm{M}}{2}$ and $a_1=\eta^{-n}(\eta^{-n}\mathrm{M}+ \mathrm{G}(a(1)))$ and taking $2\varepsilon$ instead $\varepsilon$. Then
$$
\intav{B_\eta^{k+1}}\mathrm{G}\left(|\nabla u-q|\right)\:dx\leq (2\varepsilon)^{\delta+1} \mathrm{G}(a(\eta^k))
$$
and
\begin{equation}\label{infinito}
\|\nabla u\|_{L^{\infty}\left( B_{\frac{\eta^{k+1}}{2}}\right)}\leq \mathrm{C}a(\eta^k).
\end{equation}

By \eqref{infinito}, for all $r\in \left(0,\frac{\eta^{k+1}}{2}\right)$.
 \begin{align*}
 \mathrm{G}(a(r))&=\intav{B_r}\mathrm{G}\left(|\nabla u|\right)\:dx\leq \mathrm{C} \mathrm{G}(a(\eta^{k}))\leq \mathrm{C}(\eta^{-n}\mathrm{M}+2^{-k}\mathrm{G}(a(1)))\\
 &\leq \mathrm{C}(\eta^{-n}\mathrm{M}+\mathrm{G}(a(1)))\leq \mathrm{C}(\mathrm{M},\eta)(1+\mathrm{G}(a(1))).
 \end{align*}
If $r\in\left[\frac{\eta^{k+1}}{2},\eta\right)$ then there exists $k_r$ such that $\eta^{k_r+1}<r\leq\eta^{k_r}$. Then
$$
\frac{1}{\eta^{k_r}}\leq\frac{1}{r}\leq\frac{2}{\eta^{k_r+1}}
$$
and
$$
k_r\leq k+\mathrm{C}_{\ast}
$$
So, there are two cases, first suppose that $k_r\in\{0,\cdots,k\}$ then
\begin{align*}
\mathrm{G}(a(r))&=\frac{1}{|B_{\eta^{k_r+1}}|}\int_{B_\eta^{k_r}}\mathrm{G}\left(|\nabla u|\right)\:dx\leq \eta^{-n} \mathrm{G}(\eta^{k_r})\leq \eta^{-n}(\eta^{-n}\mathrm{M}+2^{-k_r}\mathrm{G}(a(1)))\\
 &\leq \eta^{-n}(\eta^{-n}\mathrm{M}+\mathrm{G}(a(1)))\leq \mathrm{C}(\mathrm{M},\eta)(1+\mathrm{G}(a(1))).
\end{align*}
If $k_r>k$ then
\begin{align*}
\mathrm{G}(a(r))&=\frac{1}{|B_{\eta^{k+\mathrm{C}_{\ast}+1}}|}\int_{B_\eta^{k}}\mathrm{G}\left(|\nabla u|\right)\:dx\leq \eta^{-n (\mathrm{C}_{\ast}+1)} \mathrm{G}(\eta^{k})\leq \eta^{-n(\mathrm{C}_{\ast}+1)}(\eta^{-n}\mathrm{M}+2^{-k}\mathrm{G}(a(1)))\\
 &\leq \eta^{-n(\mathrm{C}_{\ast}+1)}(\eta^{-n}\mathrm{M}+\mathrm{G}(a(1)))\leq \mathrm{C}(\mathrm{M},\eta)(1+\mathrm{G}(a(1))).
\end{align*}
so, the proof of \eqref{eq.boundar} is completed.

Now, we extend \eqref{eq.boundar} for balls with center $x_0$ and radius small enough. So, for all $r\in[0,\eta]$ we have
\begin{equation}\label{eq.boundar.general}
\mathrm{G}(a(r,x_0))\leq \mathrm{C}(\eta, \mathrm{M})\left(1+\mathrm{G}(a(1))\right),
\end{equation}
where
\[
\mathrm{G}\left(a(r,x_0)\right)\defeq \intav{B_r(x_0)} \mathrm{G}\left(|\nabla u|\right)\:dx
\]
For $x_0\in B_{\frac{1}{2}}$
$$
\mathrm{G}(|\nabla u(x_0)|)=\lim_{r\to 0} \mathrm{G}(a(r,x_0))\leq \mathrm{C}(\eta, \mathrm{M})(1+\mathrm{G}(a(1))=\mathrm{C}\cdot\left(1+\int_{B_1}\mathrm{G}\left(|\nabla u|\right)\:dx\right)
$$
Finally, we obtain
$$
\|\nabla u\|_{L^{\infty}\left(B_{\frac{1}{2}}\right)}\leq \mathrm{G}^{-1}\left(1+\int_{B_1}\mathrm{G}\left(|\nabla u|\right)\:dx\right)
$$
\end{proof}

\section*{Appendix: a Campanato-type estimate}

In this section we prove a Campanato-type result in the context of Orlicz modulars, i.e. that functions with appropriate average decay are H\"older continuous. As far as the authors' are concerned, this result is new in the specialized literature and hence has independent interest. The proof follows Fern\'andez Bonder's personal notes \cite{bonder} for the homogeneous setting.

Let us denote
\[
\Omega_{x_0,\varrho} \defeq \Omega\cap B_\varrho(x_0)\quad\text{and}\quad u_{x_0,\varrho} \defeq \frac{1}{|\Omega_{x_0,\varrho}|}\int_{\Omega_{x_0,\varrho}}u(x)\:dx = \intav{\Omega_{x_0,\varrho}} u(x)dx.
\]
For our purposes, the analogous of the $L^p$ norm and Campanato seminorm (see for instance \cite{giusti2003direct}) will be given, for $\lambda\geq0$, by
\begin{equation}\label{DefPhi}
  \Phi(u)\defeq \int_\Omega \mathrm{G}(|u|)\:dx,\quad \Phi_\lambda(u) \defeq \sup_{\substack{x_0\in\Omega \\ \varrho>0}}\varrho^{-\lambda}\int_{\Omega_{x_0,\varrho}}\mathrm{G}(|u-u_{x_0,\varrho}|)\:dx
\end{equation}
These quantities are not homogeneous and hence not a norm/seminorm, but they capture the essential properties that will be used to prove the desired continuity.

We will work with functions satisfying
\begin{equation}\label{eq.campanato}
\Phi(u)+\Phi_\lambda(u)\leq \mathrm{C}_{\Phi}
\end{equation}
for some constant $\mathrm{C}_{\Phi}$. The first step is to prove that every point in $\Omega$ is a Lebesgue point of $u$:
\begin{proposition}
If $u$ satisfies \eqref{eq.campanato} and $\lambda>n$ then there exists $\tilde{u}=u$ a.e. such that
\[
\lim_{r\rightarrow0^+}u_{x_0,r}=\tilde{u}(x_0)
\]
uniformly in $\Omega$.
\end{proposition}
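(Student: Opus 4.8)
The plan is to run the classical Campanato iteration, with the extra care demanded by the fact that $\mathrm{G}$ is not homogeneous.

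\textbf{Step 1: oscillation of averages across one dyadic scale.} Fix $x_0\in\Omega$ and radii $0<\rho\leq r\leq 2\rho$. Writing $u_{x_0,\rho}-u_{x_0,r}=\intav{\Omega_{x_0,\rho}}(u-u_{x_0,r})\,dx$, using that $\mathrm{G}$ is increasing and then Jensen's inequality for the convex function $\mathrm{G}$ with respect to the probability measure $|\Omega_{x_0,\rho}|^{-1}\chi_{\Omega_{x_0,\rho}}\,dx$, together with the definition of $\Phi_\lambda$, one obtains
\[
\mathrm{G}\!\left(|u_{x_0,\rho}-u_{x_0,r}|\right)\leq \intav{\Omega_{x_0,\rho}}\mathrm{G}(|u-u_{x_0,r}|)\,dx\leq \frac{1}{|\Omega_{x_0,\rho}|}\int_{\Omega_{x_0,r}}\mathrm{G}(|u-u_{x_0,r}|)\,dx\leq \frac{\Phi_\lambda(u)\,r^{\lambda}}{|\Omega_{x_0,\rho}|}.
\]
Since $\Omega$ is a bounded Lipschitz domain it satisfies the measure density condition $|\Omega\cap B_\rho(x_0)|\geq c_\Omega\,\rho^n$; as $\rho\geq r/2$ this yields, with a constant $\mathrm{C}$ independent of $x_0$,
\[
|u_{x_0,\rho}-u_{x_0,r}|\leq \mathrm{G}^{-1}\!\left(\mathrm{C}\,r^{\lambda-n}\right),\qquad \mathrm{C}=2^n c_\Omega^{-1}\mathrm{C}_{\Phi}.
\]

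\textbf{Step 2: telescoping over dyadic radii.} For a fixed $r_0>0$ and integers $j<k$,
\[
|u_{x_0,2^{-k}r_0}-u_{x_0,2^{-j}r_0}|\leq \sum_{i=j}^{k-1}\mathrm{G}^{-1}\!\left(\mathrm{C}\,(2^{-i}r_0)^{\lambda-n}\right).
\]
This is where non-homogeneity enters: applying \eqref{minmax3} with $\theta=2^{-i(\lambda-n)}\leq 1$ gives $\mathrm{G}^{-1}(\theta t)\leq \theta^{1/(g_0+1)}\mathrm{G}^{-1}(t)$, hence
\[
\mathrm{G}^{-1}\!\left(\mathrm{C}\,(2^{-i}r_0)^{\lambda-n}\right)\leq 2^{-i\frac{\lambda-n}{g_0+1}}\,\mathrm{G}^{-1}\!\left(\mathrm{C}\,r_0^{\lambda-n}\right).
\]
Since $\lambda>n$, the geometric series $\sum_i 2^{-i(\lambda-n)/(g_0+1)}$ converges, so $\{u_{x_0,2^{-k}r_0}\}_k$ is Cauchy; combining with Step 1 to interpolate between dyadic radii, the whole family $\{u_{x_0,r}\}_{r>0}$ is Cauchy as $r\to0^+$. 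Denote its limit by $\tilde u(x_0)$. All the constants above are independent of $x_0$, so the convergence $u_{x_0,r}\to\tilde u(x_0)$ is uniform on $\Omega$.

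\textbf{Step 3: identification $\tilde u=u$ a.e.} Since $\Phi(u)<\infty$ and $\Omega$ is bounded, $u\in L^{\mathrm{G}}(\Omega)\hookrightarrow L^1(\Omega)$, so by the Lebesgue differentiation theorem $u_{x_0,r}\to u(x_0)$ for a.e. $x_0\in\Omega$; comparing this with the pointwise limit constructed in Step 2 gives $\tilde u=u$ a.e., which is exactly the claim.

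The genuinely new point, and the only real obstacle compared to the homogeneous (power) case, is the control of $\mathrm{G}^{-1}$ under the dyadic rescaling in Step 2: the two-sided scaling bound \eqref{minmax3} is precisely what makes the series summable \emph{and} keeps the constants uniform in $x_0$, which is what upgrades pointwise convergence to uniform convergence on $\Omega$. The rest is a routine adaptation of Campanato's argument (following \cite{bonder}).
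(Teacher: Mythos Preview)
Your proof is correct and follows essentially the same Campanato iteration as the paper: compare averages at nearby radii, telescope dyadically using the scaling bound \eqref{minmax3} for $\mathrm{G}^{-1}$, and identify the limit via Lebesgue differentiation. The only notable difference is cosmetic: in Step~1 you use Jensen's inequality directly on $u_{x_0,\rho}-u_{x_0,r}=\intav{\Omega_{x_0,\rho}}(u-u_{x_0,r})\,dx$, whereas the paper splits via the $\Delta_2$ inequality \eqref{eq.sums} and integrates; your version is slightly cleaner and, by explicitly invoking the measure density of the Lipschitz domain, arguably more careful about the set over which one integrates.
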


\begin{proof}

{\bf Step 1.} Let $0<r_1<r_2$. Then for $y\in B_{r_1}(x_0)$ and using \eqref{eq.sums} we have that
\[
\mathrm{G}(|u_{x_0,r_1}-u_{x_0,r_2}|)\leq 2^{g_0}\left(\mathrm{G}(|u_{x_0,r_1}-u(y)|)+\mathrm{G}(|u_{x_0,r_2}-u(y)|)\right).
\]
Integrating over $B_{r_1}(x_0)$ we get that
\[
|B_1|r_1^n\mathrm{G}(|u_{x_0,r_1}-u_{x_0,r_2}|)\leq 2^{g_0}\Phi_\lambda(u)(r_1^\lambda+r_2^\lambda)
\]
and we get
\[
\mathrm{G}(|u_{x_0,r_1}-u_{x_0,r_2}|)\leq 2^{g_0}\frac{r_1^\lambda+r_2^\lambda}{|B_1|r_1^n}\Phi_\lambda(u).
\]
or
\begin{equation}\label{desig1}
|u_{x_0,r_1}-u_{x_0,r_2}|\leq \mathrm{G}^{-1}\left(2^{g_0}\frac{r_1^\lambda+r_2^\lambda}{|B_1|r_1^n}\Phi_\lambda(u)\right).
\end{equation}

{\bf Step 2.} We will iterate \eqref{desig1} dyadically; for $j\in\mathbb{N}_0$ and fixed $r>0$ we have
\[
\mathrm{G}(|u_{x_0,2^{-(j+1)}r}-u_{x_0,2^{-j}r}|)\leq 2^{g_0}\frac{2^{-(j+1)\lambda}r^\lambda+2^{-j\lambda}r^\lambda}{|B_1|2^{(-j-1)n}r^n}\Phi_\lambda(u).
\]
This means that
\[
|u_{x_0,2^{-(j+1)}r}-u_{x_0,2^{-j}r}|\leq \mathrm{G}^{-1}\left(\frac{2^{g_0}}{|B_1|}\Phi_\lambda(u)\right)r^{\frac{\lambda-n}{g_0+1}}2^{\frac{j(n-\lambda)}{g_0+1}}
\]
and since
\[
|u_{x_0,r}-u_{x_0,2^{-k}r}|\leq \sum_{j=0}^k|u_{x_0,2^{-(j+1)}r}-u_{x_0,2^{-j}r}|
\]
we get
\begin{equation}\label{desig2}
|u_{x_0,r}-u_{x_0,2^{-k}r}|\leq \mathrm{G}^{-1}\left(\frac{2^{g_0}}{|B_1|}\Phi_\lambda(u)\right)r^{\frac{\lambda-n}{g_0+1}}\sum_{j=0}^k 2^{\frac{j(n-\lambda)}{g_0+1}}
\end{equation}

{\bf Step 3.} Now we can use \eqref{desig2} in the following way
\begin{align*}
|u_{x_0,2^{-k}r}-u_{x_0,2^{-(k+i)}r}| & \leq \mathrm{G}^{-1}\left(\frac{2^{g_0}}{|B_1|}\Phi_\lambda(u)\right)\left(\frac{r}{2^k}\right)^{\frac{\lambda-n}{g_0+1}}\sum_{j=0}^{i-1} 2^{\frac{j(n-\lambda)}{g_0+1}} \\
									  & \leq \mathrm{C}\mathrm{G}^{-1}\left(\frac{2^{g_0}}{|B_1|}\Phi_\lambda(u)\right)\left(\frac{r}{2^k}\right)^{\frac{\lambda-n}{g_0+1}}
\end{align*}
where $\mathrm{C}>0$ depends only on $g_0,n$ and $\lambda$. In particular $\left\{u_{x_0,2^{-k}r}\right\}_k$ is a Cauchy sequence (uniformly in $x_0$) and hence has a limit which we call $\tilde{u}(x_0)$:
\[
\lim_{k\rightarrow\infty}u_{x_0,2^{-k}r}=:\tilde{u}(x_0).
\]
Notice that: a) for almost every $x_0$ Lebesgue's Differentiation Theorem says that $\tilde{u}(x_0)=u(x_0)$; and b) $\tilde{u}$ does not depend on the specific choice of $r$. Indeed, passing to the limit as $i\rightarrow\infty$ above we obtain that
\[
|u_{x_0,2^{-k}r}-\tilde{u}(x_0)| \leq \mathrm{C}\mathrm{G}^{-1}\left(\frac{2^{g_0}}{|B_1|}\Phi_\lambda(u)\right)\left(\frac{r}{2^k}\right)^{\frac{\lambda-n}{g_0+1}}
\]
and then, if we consider a $r^{\prime}>0$, we can bound, using \eqref{desig1} and the previous inequality
\begin{align*}
|u_{x_0,2^{-k}r^{\prime}}-\tilde{u}(x_0)| & \leq |u_{x_0,2^{-k}r^{\prime}}-u_{x_0,2^{-k}r}|+|u_{x_0,2^{-k}r}-\tilde{u}(x_0)|  \\
								  & \leq \mathrm{G}^{-1}\left(2^{g_0}\frac{{r^{\prime}}^\lambda+r^\lambda}{|B_1|{r^{\prime}}^n}\Phi_\lambda(u)2^{-k(\lambda-n)}\right)+\mathrm{C}\mathrm{G}^{-1}\left(\frac{2^{g_0}}{|B_1|}\Phi_\lambda(u)\right)\left(\frac{r}{2^k}\right)^{\frac{\lambda-n}{g_0+1}}\\
								  & \leq 2^{-k\frac{\lambda-n}{g_0+1}}\left(\mathrm{G}^{-1}\left(2^{g_0}\frac{{r^{\prime}}^\lambda+r^\lambda}{|B_1|{r^{\prime}}^n}\Phi_\lambda(u)\right)+\mathrm{C}\mathrm{G}^{-1}\left(\frac{2^{g_0}}{|B_1|}\Phi_\lambda(u)\right)r^{\frac{\lambda-n}{g_0+1}}\right)
\end{align*}
which goes to $0$ as $k\rightarrow\infty$.

Finally, we go from discrete to continuous: we pass to the limit on $k$ to obtain in \eqref{desig2} to get
\begin{equation}\label{eq.desig3}
|u_{x_0,r}-\tilde{u}(x_0)|\leq \mathrm{G}^{-1}\left(\frac{2^{g_0}}{|B_1|}\Phi_\lambda(u)\right)r^{\frac{\lambda-n}{g_0+1}}\sum_{j=0}^\infty 2^{\frac{j(n-\lambda)}{g_0+1}}
\end{equation}
and taking the limit $r\rightarrow0^+$ we get the desired uniform convergence.
\end{proof}

Next we want to show that $u$ is indeed H\"older continuous in $\Omega$. We identify it with $\tilde{u}$ and drop the $\:\tilde{}\:$ for simplicity.

\begin{proposition}
If $u$ satisfies \eqref{eq.campanato} and $\lambda>n$ then there exists $\mathrm{C}=\mathrm{C}(n,g_0,\lambda)>0$ such that
\[
\sup_{\substack{x,y\in\Omega \\ x\neq y}}\frac{|u(x)-u(y)|}{|x-y|^\gamma}\leq \mathrm{C}\mathrm{G}^{-1}\left(\frac{2^{g_0}}{|B_1|}\Phi_\lambda(u)\right)
\]
with $\gamma=\frac{\lambda-n}{g_0+1}$.
\end{proposition}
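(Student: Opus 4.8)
The plan is to upgrade the pointwise decay estimate \eqref{eq.desig3} established in the previous proposition into a genuine H\"older modulus of continuity, by a standard three-term comparison across dyadically related balls; the only point that is genuinely non-routine here (and absent in the classical $L^p$ theory) is converting modular estimates into estimates that carry the correct power of the radius \emph{outside} $\mathrm{G}^{-1}$. Fix $x,y\in\Omega$ with $x\neq y$, put $r\defeq|x-y|$ and $\gamma\defeq\frac{\lambda-n}{g_0+1}$, and start from
\[
|u(x)-u(y)|\le |u(x)-u_{x,2r}|+|u_{x,2r}-u_{y,2r}|+|u_{y,2r}-u(y)|.
\]
By \eqref{eq.desig3}, applied at $x_0=x$ and at $x_0=y$ with radius $2r$, the first and third terms are each at most $\mathrm{C}\,\mathrm{G}^{-1}\!\left(\frac{2^{g_0}}{|B_1|}\Phi_\lambda(u)\right)(2r)^{\gamma}$, the constant $\mathrm{C}=\mathrm{C}(n,g_0,\lambda)$ coming from the geometric series $\sum_{j\ge0}2^{j(n-\lambda)/(g_0+1)}$, which converges precisely because $\lambda>n$.

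The middle term is where the extra work lies. Note first that $B_r(x)\subset B_{2r}(x)\cap B_{2r}(y)$ (if $z\in B_r(x)$ then $|z-y|\le|z-x|+|x-y|<2r$), and that $|\Omega\cap B_r(x)|\ge c\,r^n$ by the measure-density of the underlying domain --- which is automatic for balls, the relevant case in Corollary \ref{cor}. For $w\in\Omega\cap B_r(x)$, split with \eqref{eq.sums},
\[
\mathrm{G}(|u_{x,2r}-u_{y,2r}|)\le 2^{g_0}\bigl(\mathrm{G}(|u_{x,2r}-u(w)|)+\mathrm{G}(|u(w)-u_{y,2r}|)\bigr),
\]
then integrate over $\Omega\cap B_r(x)$, enlarge the two integrals on the right to $\Omega_{x,2r}$ and $\Omega_{y,2r}$, and invoke the definition of $\Phi_\lambda$ from \eqref{DefPhi} to get
\[
c\,r^n\,\mathrm{G}(|u_{x,2r}-u_{y,2r}|)\le 2^{g_0+1}(2r)^{\lambda}\,\Phi_\lambda(u),\qquad\text{i.e.}\qquad \mathrm{G}(|u_{x,2r}-u_{y,2r}|)\le \mathrm{C}\,r^{\lambda-n}\,\Phi_\lambda(u).
\]

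To finish, I would apply $\mathrm{G}^{-1}$ to this last bound and use the scaling relation \eqref{minmax3} with $\theta=r^{\lambda-n}\le1$, for which $\max\{\theta^{1/(\delta+1)},\theta^{1/(g_0+1)}\}=\theta^{1/(g_0+1)}$ because $\delta\le g_0$; this yields $|u_{x,2r}-u_{y,2r}|\le \mathrm{G}^{-1}(\mathrm{C}\,\Phi_\lambda(u))\,r^{\gamma}$, and one more application of \eqref{minmax3}, now with $\theta=\mathrm{C}\ge1$, moves the constant outside $\mathrm{G}^{-1}$. Collecting the three terms gives
\[
|u(x)-u(y)|\le \mathrm{C}\,\mathrm{G}^{-1}\!\left(\frac{2^{g_0}}{|B_1|}\Phi_\lambda(u)\right)|x-y|^{\gamma},
\]
which is the claim. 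The main obstacle, then, is really the bookkeeping around the non-homogeneity: one must keep track of which of the two exponents $\frac{1}{\delta+1}$, $\frac{1}{g_0+1}$ is activated when $\mathrm{G}^{-1}$ acts on $r^{\lambda-n}\Phi_\lambda(u)$, and check that the worst one, $g_0+1$, is precisely what produces the H\"older exponent $\gamma=\frac{\lambda-n}{g_0+1}$ asserted in the statement; everything else is the familiar Campanato telescoping together with the measure-density of $\Omega$.
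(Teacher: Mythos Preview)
Your argument is correct and follows essentially the same route as the paper's: the same three-term split $|u(x)-u(y)|\le|u(x)-u_{x,2r}|+|u_{x,2r}-u_{y,2r}|+|u_{y,2r}-u(y)|$ (the paper writes $r=2|x-y|$ instead of your $2r$), the same integration of the $\Delta_2$-split over $B_{|x-y|}(x)\subset B_{2r}(x)\cap B_{2r}(y)$ for the middle term, and the same appeal to \eqref{eq.desig3} for the endpoint terms. If anything, you are slightly more explicit than the paper about intersecting with $\Omega$ and about how \eqref{minmax3} is used to extract $r^\gamma$ from $\mathrm{G}^{-1}$; note, though, that your claim $\theta=r^{\lambda-n}\le 1$ only holds when $|x-y|\le 1$, which is harmless in the intended application to $\Omega=B_{1/2}$ but, for general bounded $\Omega$, forces the constant $\mathrm{C}$ to pick up a dependence on $\diam(\Omega)$.
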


\begin{proof}

We start with the following claim: if $x,y\in\Omega$ and $r=2|x-y|$ then
\[
|u_{x,r}-u_{y,r}|\leq \mathrm{C}\mathrm{G}^{-1}\left(\frac{2^{g_0}}{|B_1|}\Phi_\lambda(u)\right)|x-y|^\gamma
\]
with $\gamma=\frac{\lambda-n}{g_0+1}$.

To prove the claim we note that for $z\in B_{r}(x)\cap B_{r}(y)$ it holds that
\[
\mathrm{G}(|u_{x,r}-u_{y,r}|)\leq 2^{g_0}\left(\mathrm{G}(|u_{x,r}-u(z)|)+\mathrm{G}(|u_{y,r}-u(z)|)\right)
\]
and so integrating over $B_{r}(x)\cap B_{r}(y)$ we get that
\begin{align*}
|B_{r}(x)\cap B_{r}(y)| \mathrm{G}(|u_{x,r}-u_{y,r}|) &\leq 2^{g_0}\left(\int_{B_{r}(x)}\mathrm{G}(|u_{x,r}-u(z)|)\:dz+\int_{B_{r}(y)}\mathrm{G}(|u_{y,r}-u(z)|)\:dz\right) \\
											&\leq 2^{g_0+1}r^\lambda\Phi_\lambda(u)
\end{align*}
which leads to
\[
|u_{x,r}-u_{y,r}| \leq \mathrm{G}^{-1}\left(\frac{1}{|B_{r}(x)\cap B_{r}(y)|}2^{g_0+1}r^\lambda\Phi_\lambda(u)\right).
\]
The claim follows simply by noting that $B_{\frac{r}{2}}(x)\subset B_{r}(x)\cap B_{r}(y)$.

Since
\[
|u(x)-u(y)|\leq |u(x)-u_{x,r}|+|u_{x,r}-u_{y,r}|+|u_{y,r}-u(y)|
\]
the proposition follows using the claim and \eqref{eq.desig3}.
\end{proof}

We remark that $\Phi_\lambda(u)$ is equivalent to
\[
\sup_{\substack{x_0\in\Omega \\ \varrho>0}}\varrho^{-\lambda}\inf_{\xi\in\R}\int_{\Omega_{x_0,\varrho}}\mathrm{G}(|u(x)-\xi|)\:dx;
\]
indeed, one inequality being trivial, the other one follows by noting that Jensen's inequality gives	
\[
\mathrm{G}(|u_{x_0,\varrho}-\xi|)= \mathrm{G}\left(\left|\intav{\Omega_{x_0,\varrho}}( u(x)-\xi)\:dx\right|\right)\leq \intav{\Omega_{x_0,\varrho}} \mathrm{G}\left(\left| u(x)-\xi\right|\right)\:dx
\]	
from where
\begin{align*}
\int_{\Omega_{x_0,\varrho}}\mathrm{G}\left(\left|u(x)- u_{x_0,\varrho}\right|\right)\:dx &\leq 2^{g_0}\left(\int_{\Omega_{x_0,\varrho}}\mathrm{G}\left(\left|u(x)- \xi\right|\right)\:dx+\int_{\Omega_{x_0,\varrho}}\mathrm{G}\left(\left|\xi- u_{x_0,\varrho}\right|\right)\:dx\right)  \\
&\leq 2^{g_0+1}\int_{\Omega_{x_0,\varrho}}\mathrm{G}\left(\left|u(x)- \xi\right|\right)\:dx.
\end{align*}

In summary, we have the following theorem:
\begin{theorem}\label{campanato}
Let $u$ be a measurable function satisfying
\[
\int_\Omega \mathrm{G}(|u|)\:dx+\sup_{\substack{x_0\in\Omega \\ \varrho>0}}\varrho^{-\lambda}\inf_{\xi\in\R}\int_{\Omega_{x_0,\varrho}}\mathrm{G}(|u(x)-\xi|)\:dx\leq \mathrm{C}_0
\]
for some positive constant $\mathrm{C}_0$ and $\lambda>n$. Then, $u\in C^{0, \gamma}(\Omega)$ with $\gamma=\frac{\lambda-n}{g_0+1}$ and
\[
\sup_{\substack{x,y\in\Omega \\ x\neq y}}\frac{|u(x)-u(y)|}{|x-y|^\gamma}\leq \mathrm{C}\mathrm{G}^{-1}\left(\sup_{\substack{x_0\in\Omega \\ \varrho>0}}\varrho^{-\lambda}\inf_{\xi\in\R}\int_{\Omega_{x_0,\varrho}}\mathrm{G}(|u(x)-\xi|)dx\right)
\]
\end{theorem}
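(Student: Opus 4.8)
The plan is to deduce Theorem \ref{campanato} directly from the two preceding propositions together with the Remark identifying the mean-value and the best-constant versions of the Campanato quantity. Writing
\[
\widetilde{\Phi}_\lambda(u)\defeq \sup_{\substack{x_0\in\Omega\\\varrho>0}}\varrho^{-\lambda}\inf_{\xi\in\R}\int_{\Omega_{x_0,\varrho}}\mathrm{G}(|u(x)-\xi|)\:dx
\]
for the infimum version appearing in the statement, the Remark gives $\widetilde{\Phi}_\lambda(u)\leq \Phi_\lambda(u)\leq 2^{g_0+1}\widetilde{\Phi}_\lambda(u)$, with $\Phi_\lambda$ as in \eqref{DefPhi}. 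In particular the hypothesis $\Phi(u)+\widetilde{\Phi}_\lambda(u)\leq \mathrm{C}_0$ forces $\Phi(u)+\Phi_\lambda(u)\leq 2^{g_0+1}\mathrm{C}_0$, that is, condition \eqref{eq.campanato} holds with $\mathrm{C}_\Phi=2^{g_0+1}\mathrm{C}_0$.

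Next, since $\lambda>n$, I would invoke the first proposition to produce a representative $\widetilde{u}=u$ a.e.\ with $u_{x_0,r}\to\widetilde{u}(x_0)$ uniformly in $\Omega$ as $r\to0^+$, and then the second proposition (whose relevant averages are the same $u_{x_0,r}$) to obtain $\widetilde{u}\in C^{0,\gamma}(\Omega)$ with $\gamma=\frac{\lambda-n}{g_0+1}$ and
\[
\sup_{\substack{x,y\in\Omega\\x\neq y}}\frac{|\widetilde{u}(x)-\widetilde{u}(y)|}{|x-y|^\gamma}\leq \mathrm{C}\,\mathrm{G}^{-1}\!\left(\frac{2^{g_0}}{|B_1|}\Phi_\lambda(u)\right).
\]
Identifying $u$ with its continuous representative $\widetilde{u}$ then yields $u\in C^{0,\gamma}(\Omega)$.

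Finally, to match the right-hand side in the statement I would replace $\Phi_\lambda(u)$ by $\widetilde{\Phi}_\lambda(u)$ using $\Phi_\lambda(u)\leq 2^{g_0+1}\widetilde{\Phi}_\lambda(u)$, the monotonicity of $\mathrm{G}^{-1}$, and the scaling inequality \eqref{minmax3} applied with the universal constant $\theta = 2^{2g_0+1}/|B_1|$, which only costs a factor $\max\{\theta^{1/(\delta+1)},\theta^{1/(g_0+1)}\}$. Absorbing this into $\mathrm{C}=\mathrm{C}(n,g_0,\lambda)$ gives the asserted estimate.

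I do not expect a genuine obstacle here: the whole content is already contained in the two propositions and the Remark. The only point requiring a little care — and it is precisely what the Remark dispatches — is that the non-homogeneity of $\mathrm{G}$ is benign, since \eqref{eq.sums} and \eqref{minmax3} show that passing between the two forms of the Campanato functional, and simplifying the argument of $\mathrm{G}^{-1}$, introduces only universal multiplicative constants and leaves the H\"older exponent $\gamma=\frac{\lambda-n}{g_0+1}$ untouched.
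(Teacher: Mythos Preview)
Your proposal is correct and matches the paper's approach: Theorem \ref{campanato} is explicitly presented there as a summary (``In summary, we have the following theorem''), obtained by combining the two preceding propositions with the remark that $\Phi_\lambda(u)$ and the infimum quantity $\widetilde{\Phi}_\lambda(u)$ are comparable up to the factor $2^{g_0+1}$. The only cosmetic point is that absorbing the constant coming from \eqref{minmax3} may also bring in a dependence on $\delta$ (when $\theta>1$ the maximum is $\theta^{1/(\delta+1)}$), but this is harmless for the conclusion.
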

\begin{remark}
One could consider $\lambda = n+g_0+1$ in the previous theorem and obtain $u \in C^{0, 1}(\Omega)$, i.e. $u$ is Lipschitz continuous in $\Omega$.
\end{remark}

\begin{definition}[{\cite[Definition 1.2]{SAN21}}] Let $p \in [1,\infty)$ and a $\psi \colon (0,\infty) \to (0,\infty)$ be a continuous function.  We define the generalized Campanato space
$$
\displaystyle \mathcal{L}_{p,\psi}(\Omega) \defeq \left\{f: \Omega \to \mathbb{R}: \,\, \|f\|_{\mathcal{L}_{p,\psi}(\Omega)} \defeq \sup_{B_r(x_0) \subset \Omega } \frac{1}{\psi(r)} \left(\intav{B_r(x_0)} |f(x)-f_{B_r(x_0)}|^pdx\right)^{\frac{1}{p}}< \infty\right\}.
$$
\end{definition}

\begin{remark}
  Observe that $\|f\|_{\mathcal{L}_{p,\psi}(\Omega)}$ is a norm (modulo constant functions), thus $\mathcal{L}_{p,\psi}(\Omega)$ defines a Banach space. Moreover, if $p = 1$ and $\psi \equiv 1$, then $\mathcal{L}_{1,1}(\Omega) = \mathrm{BMO}(\Omega)$. Finally, if $p = 1$ and $\psi(r) = r^{\gamma}$ (for some $0 < \gamma \le 1$), then $\mathcal{L}_{1,r^{\gamma}}(\Omega)$ coincides with standard spaces $C^{0, \gamma}(\Omega)$.
\end{remark}

We also consider now the following functional space
$$
\text{BMO}^{\ast}(\Omega) \defeq \left\{u : \Omega \to \mathbb{R}: \,\,\,\displaystyle \|u\|_{\text{BMO}^{\ast}(\Omega)}  \defeq  \sup_{x_0 \in \Omega \atop{\rho>0} } \intav{\Omega_{x_0, \rho}} |u-u_{x_0, \rho}| dx< \infty \right\}.
$$

Now, we will suppose that $\Omega$ satisfies a $\hat{\mathrm{c}}_0-$uniform positive density ($\hat{\mathrm{c}}_0-$UPD for short), i.e., there exists a universal constant $\hat{\mathrm{c}}_0 \in (0, 1)$ such that
$$
\displaystyle \frac{|\Omega_{x_0, \rho}|}{|B_{\rho}(x_0)|} = \frac{|\Omega \cap B_{\rho}(x_0)|}{|B_{\rho}(x_0)|} \geq \hat{\mathrm{c}}_0 \quad \forall\,\,x_0 \in \overline{\Omega} \,\,\text{and}\,\, \rho>0.
$$
Remember we are assuming that $\Omega$ is a bounded Lipschitz domain, thus it fulfils $\hat{\mathrm{c}}_0-$UPD property for some $\hat{\mathrm{c}}_0 \in (0, 1)$.

Finally, taking into account the above definitions and using Jensen's inequality for concave functions, we can conclude in the borderline case, i.e., $\lambda=n$ (see \eqref{DefPhi} and \eqref{eq.campanato}) the following:
$$
\begin{array}{rcl}
  \displaystyle \|u\|_{\text{BMO}^{\ast}(\Omega)} & \defeq & \displaystyle \sup_{x_0 \in \Omega \atop{\rho>0} } \intav{\Omega_{x_0, \rho}} |u-u_{x_0, \rho}| dx \quad \\
   & = & \displaystyle \sup_{x_0 \in \Omega \atop{\rho>0} } \intav{\Omega_{x_0, \rho}} \mathrm{G}^{-1}\left(\mathrm{G}(|u-u_{x_0, \rho}|)\right) dx \quad \\
   & \le & \displaystyle \sup_{x_0 \in \Omega \atop{\rho>0} }  \mathrm{G}^{-1}\left(\displaystyle \intav{\Omega_{x_0, \rho}} \mathrm{G}(|u-u_{x_0, \rho}|) dx \right) \quad \\
   & \le & \mathrm{G}^{-1}\left(\displaystyle \sup_{x_0 \in \Omega \atop{\rho>0} } \intav{\Omega_{x_0, \rho}} \mathrm{G}(|u-u_{x_0, \rho}|) dx \right) \quad \\
   & \le & \mathrm{G}^{-1}\left(\hat{\mathrm{c}}_0^{-1} \omega_n^{-1}\displaystyle \sup_{x_0 \in \Omega \atop{\rho>0} } \frac{1}{\rho^n}\int_{\Omega_{x_0, \rho}} G(|u-u_{x_0, \rho}|) dx \right) \quad \\
    & \le & \mathrm{G}^{-1}\left(\hat{\mathrm{c}}_0^{-1}\omega_n^{-1}\mathrm{C}_{\Phi}\right).
\end{array}
$$
Therefore, $u \in \text{BMO}^{\ast}(\Omega)$ provided $\lambda =n$ and $\Omega$ fulfils a $\hat{\mathrm{c}}_0-$UPD property.

\section*{Acknowledgments}

The authors would like to thank Juli\'an Fern\'andez Bonder for lending his notes on Morrey estimates for fractional Sobolev spaces, from which the ideas for the Appendix were taken.

A. Silva was partially supported by ANPCyT under grant PICT 2019-3837, by CONICET under grant PIP 11220210100238CO and by UNSL under grant PROICO 03-2023. J.V. da Silva has been partially supported by CNPq-Brazil under Grant No. 307131/2022-0 and FAEPEX-UNICAMP 2441/23 Editais Especiais - PIND - Projetos Individuais (03/2023). H. Vivas was partially supported by ANPCyT under grant PICT 2019-03530.

\bibliographystyle{plain}
\bibliography{biblio}

\end{document}